\newcommand{\pres}[2]{\bigl\langle #1\:|\:#2 \bigr\rangle}
\newcommand{\Mpres}[2]{\operatorname{Mon}\bigl\langle #1\:|\:#2 \bigr\rangle}
\newcommand{\Mgen}[1]{\operatorname{Mon}\bigl\langle #1 \bigr\rangle}
\newcommand{\Ggen}[1]{\operatorname{Gp}\bigl\langle #1 \bigr\rangle}
\newcommand{\Spres}[2]{\operatorname{Sgp}\bigl\langle #1\:|\:#2 \bigr\rangle}
\newcommand{\Gpres}[2]{\operatorname{Gp}\bigl\langle #1\:|\:#2 \bigr\rangle}
\newcommand{\Ipres}[2]{\operatorname{Inv}\bigl\langle #1\:|\:#2 \bigr\rangle}
\newcommand{\M}{R_{m,n}}
\newcommand{\gl}{\mathcal{L}}
\newcommand{\pref}[1]{\operatorname{Pref}( #1 )}
\tikzset{%
  >={Latex[width=2mm,length=2mm]},
            base/.style = {rectangle, rounded corners, draw=black,
                           minimum width=2cm, minimum height=0.8cm,
                           text centered, font=\sffamily},
  open/.style = {base, fill=white!30},
       undec/.style = {base, fill=red!30},
    dec/.style = {base, fill=green!30},
         process/.style = {base, minimum width=2.5cm, fill=orange!15,
                           font=\ttfamily},
}
\newcommand{\N}{\mathbb{N}}
\newcommand{\Z}{\mathbb{Z}}
\theoremstyle{plain}
\newtheorem{theorem}{Theorem}[section]
\newtheorem{prop}[theorem]{Proposition}
\newtheorem{lemma}[theorem]{Lemma}
\newtheorem{cor}[theorem]{Corollary}
\theoremstyle{definition}
\newtheorem{example}[theorem]{Example}
\newtheorem{question}[theorem]{Question}
\newtheorem{remark}[theorem]{Remark}
\newtheorem{construction}[theorem]{Construction}
\title[Membership problems for one-relator groups and monoids]{Membership problems for positive one-relator groups \\ and one-relation monoids}
\author{Islam Foniqi}
\address{Islam Foniqi, School of Mathematics, University of East Anglia, Norwich NR4 7TJ, England, UK}
\email{I.Foniqi@uea.ac.uk}
\author{Robert D. Gray }
\address{Robert D. Gray, School of Mathematics, University of East Anglia, Norwich NR4 7TJ, England, UK}
\email{Robert.D.Gray@uea.ac.uk}
\author{Carl-Fredrik Nyberg-Brodda}
\address{Carl-Fredrik Nyberg-Brodda, 
Korea Institute for Advanced Study (KIAS), Seoul, South Korea}
\email{cfnb@kias.re.kr}
\thanks{\\ The research of the first two named authors was supported by the EPSRC Fellowship grant EP/V032003/1 ‘Algorithmic, topological and geometric aspects of infinite groups, monoids and inverse semigroups’.}
\dedicatory{Dedicated to the memory of V.\ S.\ Guba ({1962}--{2023}).}
\keywords{Membership problem, rational subset, one-relator group, one-relation monoid, right-angled Artin group, trace monoid, inverse monoid}
\subjclass[2020]{20F05, 20F10 (primary), 20F36, 20M05, 20M18 (secondary)}
\newcommand{\leqnomode}{\tagsleft@true\let\veqno\@@leqno}
\newcommand{\reqnomode}{\tagsleft@false\let\veqno\@@eqno}
\begin{document}
	
\begin{abstract}
Motivated by approaches to the word problem for one-relation monoids arising from work of Adian and Oganesian (1987), Guba (1997), and Ivanov, Margolis and Meakin (2001), 
we study the submonoid and rational subset membership problems in one-relation monoids and in positive one-relator groups. We give the first known examples of positive one-relator groups with undecidable submonoid membership problem, and apply this to give the first known examples of one-relation monoids with undecidable submonoid membership problem. We construct several infinite families of one-relation monoids with undecidable submonoid membership problem, including examples that are defined by relations of the form $w=1$ but which are not groups, and examples defined by relations of the form $u=v$ where both of $u$ and $v$ are non-empty. As a consequence we obtain a classification of the right-angled Artin groups that can arise as subgroups of one-relation monoids.  We also give examples of monoids with a single defining relation of the form $aUb = a$, and examples of the form $aUb=aVa$, with undecidable rational subset membership problem. 
We give a one-relator group defined by a freely reduced word of the form $uv^{-1}$ with $u, v$ positive words, in which the prefix membership problem is undecidable. Finally, we prove the existence of a special two-relator inverse monoid with undecidable word problem, and in which both the relators are positive words. As a corollary, we also find a positive two-relator group with undecidable prefix membership problem. In proving these results, we introduce new methods for proving undecidability of the rational subset membership problem in monoids and groups, including by finding suitable embeddings of certain trace monoids.
\end{abstract}
\maketitle

\vspace{-6mm}

\section{Introduction and summary of results}
Central among algorithmic problems in combinatorial algebra is the word problem which, given an algebraic structure defined by generators and relations, asks whether there is an algorithm which takes two expressions over the generators and decides whether they represent the same element. The word problem for finitely presented semigroups was proved undecidable by Markov and, independently, Post, in 1947. This was subsequently improved to the undecidability of the word problem in finitely presented cancellative semigroups by Turing \cite{Turing1950} resp.\ groups by Novikov in 1952 and, independently, Boone and Britton in 1958. In spite of, and of course unaware of, these general impossibilities, Magnus \cite{Magnus1932} had proved in 1932 that the word problem is decidable for all groups with only a single defining relation; such groups are now called \textit{one-relator groups}. By contrast, the word problem for monoids with one defining relation -- \textit{one-relation monoids} -- remains a tantalizing open problem, in spite of over a century of investigations; see \cite{NybergBrodda2021a} for a recent survey of the problem.

The majority of the results on the word problem for one-relation monoids have been focussed on trying to obtain a positive solution. The problem has now been solved positively in several cases. For example, Adian \cite{Adian1966} proved that the word problem is decidable for all \textit{special} one-relation monoids, being those admitting a presentation of the form $M = \Mpres{A}{w=1}$, and results of Adian \& Oganesian \cite{Adian1978, Adian1987} 
show that the word problem for a given $\Mpres{A}{u = v}$ can be reduced to the word problem for a one-relation monoid of the form  
$\Mpres{a,b}{bUa = aV a}$ or $\Mpres{a,b}{bUa = a}$. 
In both of these cases the word problem remains open. 

There are several important reduction results in the literature that relate the word problem in one-relation monoids to other natural decision problems in one-relator groups, one-relation monoids, and also in a class that lies between these two called inverse monoids. As we shall explain in more detail below, these reduction results divide into three interrelated approaches to the word problem for one-relation monoids, namely:  
(i) results of Ivanov, Margolis and Meakin \cite{Ivanov2001} that give a reduction to the word problem in one-relator inverse monoids, 
(ii) results of Guba \cite{Guba1997} that give a reduction 
to the submonoid membership problem in positive one-relator groups, 
and 
(iii) results of Adian and Oganesian 
\cite{Adian1966, Adian1976, Adian1987}
that give a reduction to the problem of deciding membership in principal right ideals of certain one-relation monoids.   
Here a one-relator group is \textit{positive} if it admits a presentation $\Gpres{A}{r=1}$ where no inverse symbol appears in $r$. Such groups were studied by e.g. Baumslag \cite{Baumslag1971}, as well as by Perrin \& Schupp \cite{Perrin1984}, who proved that a one-relator group is positive if and only if it is a one-relation monoid. 
There are also natural connections between the three approaches above. 
For example, Guba's reduction may alternatively be expressed as a question asking for membership in the submonoid of a one-relator group with defining relation of the form $uv^{-1}=1$ generated by the prefixes of the defining relation, where $u, v$ are both positive words. This \emph{prefix membership problem} for one-relator groups also arises naturally in the work of  
Ivanov, Margolis and Meakin \cite{Ivanov2001} where for cyclically reduced relator words they show that word problem for the inverse monoid reduces to the prefix membership problem for the group. 
In the reduction result (iii) the principal right ideals of one-relation monoids will not typically be finitely generated submonoids, but they are examples of rational subsets of the monoid. Hence one consequence of the reduction result (iii) of Adian and Oganesian is that a necessary step for constructing one-relation monoids with undecidable word problem is to first construct examples in which there are rational subsets in which membership is undecidable. 
This provides a connection between this approach and the approach of Guba to the word problem. 
Indeed, since by \cite{Perrin1984} every positive one-relator group is in particular a one-relation monoid,
the study of the submonoid membership problem for one-relation monoids has as a special case the submonoid membership problem for positive one-relator groups. 
Hence both of these approaches lie within the broader study of decidability of membership in rational subsets of one-relation monoids.  
These three approaches to the word problem for one-relation monoids with their various interrelations are summarized in the diagram in Figure~\ref{fig_implications}.
In addition to the motivation for their study coming from the connection with the word problem for one-relation monoids, the decision problems listed there, e.g. submonoid membership problem for one-relation monoids, are also natural questions to study in their own right.

\begin{figure}
\centering
    \resizebox{0.9\textwidth}{!}{%
\begin{tikzpicture}[node distance=1.8cm,
    every node/.style={fill=white,minimum height=1.5cm}, align=center, implies/.style={line width=0.75pt, double,double equal sign distance,-implies}]

  \newcommand\xsep{5}
  \newcommand\ysep{3}  

\node[text width=1cm]  (a) at (2.5,0.6){\tiny{Guba (1997)}};
\node[text width=1.5cm]  (b) at (2.5,-5){\tiny{Ivanov, Margolis, Meakin (2001)}};
\node[text width=1.8cm] (c) at (11.1,-4.6){\tiny{Ivanov, Margolis, Meakin (2001)}};
\node[text width=2.5cm] (d) at (11.2,1.6){\tiny{Adian (1967, 1976), Adian \& Oganesian (1987)
}};
\draw[rounded corners] (8, 0.9) rectangle (12, -3.9) {}; 
\draw[implies] (7.8,0) -- (6.9,0);

  \node (03)[open,minimum height=1.5cm,minimum width=3.5cm]  at (2*\xsep,\ysep){Principal right ideal MP in \\  $\Mpres{a,b}{bUa=a}$ \& \\ $\Mpres{a,b}{bUa=aVa}$};
 
  \node (02)[undec,minimum height=1.5cm,minimum width=3.5cm]  at (2*\xsep,2*\ysep){Rat Subset MP  in \\  $\Mpres{a,b}{bUa=a}$ \& \\ $\Mpres{a,b}{bUa=aVa}$ \\ \footnotesize{(Theorem~\ref{thm_UndecRatSubsetGubaType}, Remark~\ref{rem_MoreNonSubspecialExamples})} };

  \node (11)[open,minimum height=1.5cm,minimum width=3.5cm]  at (0,0){PMP for \\ $\Gpres{A}{cVdc^{-1}=1}$ \\ $V$ positive};
  \node (12)[open,minimum height=1.5cm,minimum width=3.5cm]   at (\xsep,0){WP for \\ $\Mpres{a,b}{bUa=a}$};
  \node (13)[open,minimum height=1.5cm,minimum width=3.5cm]   at (2*\xsep,0){WP for \\ $\Mpres{a,b}{bUa=a}$ \& \\ $\Mpres{a,b}{bUa=aVa}$};
  
  \node (21)[undec, minimum height=1.5cm,minimum width=3.5cm]   at (0,-\ysep){PMP for \\ $\Gpres{A}{uv^{-1}=1}$, \\ $uv^{-1}$ reduced \\ \footnotesize{(Theorem~\ref{Thm:exists-quasi-positive-undec-prefix})} };
  \node (new)[undec, minimum height=1.5cm,minimum width=3.5cm]   at (\xsep,-\ysep){WP for \\ $\Ipres{A}{u=1, v=1}$ \\ $u,v \in A^+$ positive \\ \footnotesize{(Theorem~\ref{thm_2RelatorInverseUndecWP}, Corollary~\ref{cor_TwoRelator})}};
  \node (22)[undec, minimum height=1.5cm,minimum width=3.5cm]   at (0,\ysep){Submonoid MP for \\ $\Gpres{A}{w=1}$ \\ $w \in A^+$ positive \\ \footnotesize{(Theorem~\ref{thm_mainPositive})}};
  \node (23)[open,minimum height=1.5cm,minimum width=3.5cm]   at (2*\xsep,-\ysep){WP for \\ $\Mpres{A}{u=v}$};
  
  \node (31)[open,minimum height=1.5cm,minimum width=3.5cm]   at (0,-2*\ysep){PMP for \\ $\Gpres{A}{uv^{-1}=1}$ \\ $uv^{-1}$ cyc. reduced};
  \node (32)[open,minimum height=1.5cm,minimum width=3.5cm]   at (\xsep,-2*\ysep){WP for \\ $\Ipres{A}{uv^{-1}=1}$ \\ $uv^{-1}$ cyc. reduced};
  \node (33)[open,minimum height=1.5cm,minimum width=3.5cm]   at (2*\xsep,-2*\ysep){WP for \\ $\Ipres{A}{uv^{-1}=1}$ \\ $uv^{-1}$ reduced};

 \node (252)[undec, minimum height=1.5cm,minimum width=3.5cm]   at (\xsep,\ysep){Submonoid MP for \\ $\Mpres{A}{u=v}$ 
 \\ \footnotesize{(Corollary~\ref{Cor:Exist-special-undecidable}} 
\\ \footnotesize{Prop.~\ref{prop_specialmonoidexamples} \& \ref{prop_subspecial}, Ex.~\ref{ex_compressSpecial})}};

  \node (new0)[open,minimum height=1.5cm,minimum width=3.5cm]  at (0,2*\ysep){Submonoid MP for \\ all surface groups};

  \node (new2)[undec,minimum height=1.5cm,minimum width=3.5cm]  at (\xsep,2*\ysep){Rat Subset MP  in \\  $\Mpres{A}{u=v}$ };


  \draw[implies] ($(new0)!2cm!(22)$)  --  ($(22)!2cm!(new0)$) ;

  \draw[implies] ($(new2)!2cm!(02)$)  -- ($(02)!2cm!(new2)$)  ;
  \draw[implies] ($(252)!2cm!(new2)$) -- ($(new2)!2cm!(252)$) ;

  \draw[implies] ($(new)!2cm!(33)$)  -- ($(33)!2cm!(new)$)  ;

  \draw[implies]  ($(13)!2cm!(03)$)  -- ($(03)!2cm!(13)$) ;

  \draw[implies] ($(03)!2cm!(02)$) -- ($(02)!2cm!(03)$) ;
 
  \draw[implies]  ($(252)!2cm!(22)$) --  ($(22)!2cm!(252)$) ;
  \draw[implies] ($(11)!2cm!(22)$) -- ($(22)!2cm!(11)$);
 
  \draw[implies] ($(12)!3cm!(11)$) -- ($(11)!3cm!(12)$);

  \draw[implies] ($(11)!2cm!(21)$) -- ($(21)!2cm!(11)$);
  \draw[implies] ($(13)!1.6cm!(23)$) -- ($(23)!2cm!(13)$);
  \draw[implies] ($(23)!1.5cm!(13)$) -- ($(13)!2cm!(23)$);
  \draw[implies] ($(23)!2cm!(33)$)  -- ($(33)!2cm!(23)$) ;  
  \draw[implies] ($(11)!2cm!(21)$) -- ($(21)!2cm!(11)$);
  \draw[implies] ($(32)!3cm!(33)$) -- ($(33)!3cm!(32)$);
  \draw[implies] ($(31)!2cm!(21)$) -- ($(21)!2cm!(31)$) ;
  \draw[implies] ($(32)!3cm!(31)$) -- ($(31)!3cm!(32)$);
  \end{tikzpicture}
}
\label{fig_implications}
\caption{
A summary of the main results of this article and how they relate to the three approaches to the word problem 
for one-relation monoids 
given by reduction results of 
(i) Ivanov, Margolis and Meakin \cite{Ivanov2001}, 
(ii) Guba \cite{Guba1997}, and  
(iii) Adian and Oganesian \cite{Adian1966, Adian1976, Adian1987}. 
The arrows indicate implication of decidability. 
The problems in red are all proved to be undecidable in this article in the results listed in the corresponding boxes.  
The problems in white boxes are all open. 
\vspace{-5mm}
}
\end{figure}

The connections explained above have led to extensive research and numerous positive decidability results have been obtained for special cases of these problems; see for example  
\cite{Guba1997, Ivanov2001, Jackson2001, Jackson2002, Arye2012, MMSunik2005, Meakin2007}.  
Until recently all of the focus of this work has been on showing that various special cases of these problems are decidable. 
However, several recent striking undecidability results 
in this area have for the first time brought into question the view that our attention should only be focussed on seeking positive solutions to such problems. 
First, Gray \cite{gray2020undecidability} proved the existence of a special one-relator inverse monoid $\Ipres{A}{w=1}$ with undecidable word problem and at the same time proved that there are one-relator groups with undecidable submonoid membership problem.  Then, Dolinka \& Gray \cite{dolinka2021new} went on to prove the existence of a one-relator group $\Gpres{B}{r=1}$ with undecidable prefix membership problem (where $r$ is a reduced word). Given the reduction results of Guba \cite{Guba1997} and Ivanov, Margolis, Meakin \cite{Ivanov2001} discussed above, if any of these problems had been decidable it would have resolved positively the word problem for one-relation monoids either in general, or for one of the two main open cases.  

These recent undecidability results give the first serious indication that the word problem for one-relation monoids could in fact be undecidable.  
If it is undecidable then, of course, all of the other reduction results mentioned above must also have negative answers. With that viewpoint in mind, the main goal of the current paper is to present a collection of new undecidability results all of the type that if they had been decidable then it would have solved positively the word problem for one-relation monoids (either in general, or in one of the two main open cases).  
For this we will conduct a detailed study of the rational subset, and submonoid, membership problems in positive one-relator groups, and in one-relation monoids. 
We will also introduce new tools for proving undecidability results of this kind.

Figure~\ref{fig_implications} gives a summary of the main undecidability results proved in this article and how they relate to the three approaches to the word problem discussed above. 
We shall now explain in more detail the reduction results discussed above, and summarized in Figure~\ref{fig_implications}, and in each case give an overview of the undecidability results related to them that we shall prove in this paper.

The word problem for one-relation monoids of the form $\Mpres{a,b}{bUa=a}$ remains open. We shall call these the \emph{monadic} one-relation monoids. Important work of Guba \cite[Corollary~2.1]{Guba1997} implies that the word problem for one-relation monoids in this class reduces to the submonoid membership problem for positive one-relator groups.
In more detail, it follows from the results of Guba \cite{Guba1997} that for every one-relation monoid of the form $M = \Mpres{a,b}{bQa=a}$, with $a$ and $b$ distinct, there exists a group defined by a presentation of the form $G = \Gpres{a,b,C}{aUba^{-1}=1}$, where $C$ is a finite set of new generators and $U$ is a positive word over $\{a,b\} \cup C$, such that if $G$ has decidable prefix membership problem then $M$ has decidable word problem. In fact, Guba proves the equivalent dual result reducing the problem to the suffix membership problem in $\Gpres{a,b,C}{a^{-1}bUa=1}$.
Note that $\Gpres{a,b,C}{abUa^{-1}=1}$ is a positive one-relator group, as it is isomorphic to $\Gpres{a,b,C}{bU=1}$. However, in general, the prefix monoid of $\Gpres{a,b,C}{abUa^{-1}=1}$ and of $\Gpres{a,b,C}{bU=1}$ will not be the same and, related to this, decidability of the prefix membership problem depends on the choice of presentation for a group rather than just its isomorphism type.
Clearly if the positive one-relator group $G$ has decidable submonoid membership problem, then in particular we can decide membership in the prefix monoid. Hence this reduction result of Guba motivates motivates the question of whether the submonoid membership problem is decidable for positive one-relator groups. 
Further motivation for this question comes from the fact that the submonoid membership problem is decidable for all surface groups if and only if it is decidable in the positive one-relator group with defining relation $a^2b^2c^2=1$; see Subsection~\ref{subsec_surface} for further discussion of this important open problem.  
As mentioned above, it was only recently discovered in \cite{gray2020undecidability} that there exist one-relator groups that contain finitely generated submonoids in which membership is undecidable. The first example of such a one-relator group was constructed by Gray \cite{gray2020undecidability}, and has the single defining relation $abba = baab$. Recently Nyberg--Brodda \cite{nyberg2022diophantine} proved that the one-relator groups $\Gpres{a,b}{a^mb^n = b^na^m}$ have undecidable submonoid membership problem for every $m, n \geq 2$. 
However, it may be shown (see below) that none of these one-relator groups admits a one-relator presentation with a positive defining relator word, that is, none of them are positive one-relator groups. The starting point for the work in this paper is to build on the examples from \cite{nyberg2022diophantine} to obtain positive one-relator groups with undecidable word problem by allowing $m$ and $n$ to vary through all integer values. 
As with previous examples, this is achieved by  
showing that the right-angled Artin group $A(P_4)$ of the path with four vertices embeds in the group, and then appealing to a theorem of Lohrey and Steinberg \cite[Theorem~2]{Lohrey2008}.  
However, what makes doing this more difficult than in the cases considered in \cite{nyberg2022diophantine} is that the embedded copies of $A(P_4)$ that we find in the positive one-relator groups are no longer subgroups of finite index, so Reidemeister--Schreier rewriting methods alone are not sufficient to obtain the result.
This gives the first main result of this paper (Theorem~\ref{thm_mainPositive}) where we exhibit an infinite family of positive one-relator groups all with undecidable submonoid membership problem.

In another direction, Guba's reduction motivates the question of whether the prefix membership problem is decidable for all one-relator groups of the form  $G = \Gpres{a,b,C}{aUba^{-1} = 1}$ where $U$ is a positive word. Here the defining relator is a reduced word of the form $uv^{-1}$ where $u$ and $v$ are both positive words. For words of this form we shall call the corresponding one-relator groups $\Gpres{A}{uv^{-1}=1}$ \emph{quasi-positive}. Quasi-positive one-relator groups have received attention in the literature in the study of the word problem for the related class of inverse monoids motivated by results of Ivanov, Margolis and Meakin \cite{Ivanov2001} discussed above and illustrated in the 
implications in the bottom two lines of Figure~\ref{fig_implications}. The prefix membership problem for various families of quasi-positive one-relator groups has been solved positively by Margolis, Meakin, and \v{S}uni\'{k} \cite[Corollary~2.6]{MMSunik2005}, and some cases of the word problem for the corresponding class of inverse monoids have been resolved by Inam \cite{Inam2017}. 
This connects more generally with the study of groups and inverse monoids defined by, so-called, Adian type presentations; see \cite{ Stallings1987, Inam2017b}.
In Section~\ref{Sec:prefix-membership-problem} we 
prove Theorem~\ref{Thm:exists-quasi-positive-undec-prefix} showing that that there is a one-relator group of the form $\Gpres{A}{uv^{-1}=1}$, where $u, v$ are positive words and $uv^{-1}$ is a reduced word, with undecidable prefix membership problem. 
This is the first known example of a quasi-positive one-relator group with undecidable prefix membership problem.

Our other main motivation for investigating membership problems in positive one-relator groups was the connection with the open question of whether one-relation monoids have decidable submonoid membership problem. When Magnus solved the word problem for one-relation groups he actually proved a more general result: membership in Magnus subgroups\footnote{These are subgroups generated by a subset of the generating set omitting a generator that appears in the defining relator word. Magnus called this specific membership problem the \textit{erweitertes Identitätsproblem}, i.e. ``extended word problem''.} is decidable. However, the general subgroup membership problem (also called generalized word problem) remains an important open problem for one-relator groups; see \cite[Problem~18]{BooneWP}. The analogue of this question for monoids asks whether one-relation monoids have decidable submonoid membership problem. This problem has been shown to be decidable in several examples and families of one-relation monoids; see \cite{Jackson2001, Jackson2002, Kambites2009, NybergBrodda2022a, Render2009}.  
Since by \cite{Perrin1984} a one-relator group is a one-relation monoid if and only if it is a positive one-relator group, the positive one-relator groups with undecidable submonoid membership problem that we give in this paper (Theorem~\ref{thm_mainPositive}) give the first known examples of one-relation monoids with undecidable submonoid membership problem (Corollary~\ref{Cor:Exist-special-undecidable}). Building on this, we shall construct several infinite families of one-relation monoids with undecidable submonoid membership problem including examples that are groups, examples that are defined by relations of the form $w=1$ but are not groups, and examples defined by relations of the form $u=v$ where neither $u$ nor $v$ is equal to $1$, so these monoids have trivial groups of units; see Propositions~\ref{prop_specialmonoidexamples} \& \ref{prop_subspecial}, and Example~\ref{ex_compressSpecial}.  As part of this, we also obtain a classification of exactly which right-angled Artin groups can appear as subgroups of one-relation monoids; see Theorem~\ref{thm_RAAGsEmbedding}.

Generalizing the submonoid membership problem is the \textit{rational subset membership problem}, which asks for an algorithm to decide membership in the image of an arbitrary regular language. In groups, this problem has been surveyed by Lohrey \cite{Lohrey2015}. The rational subset membership problem in one-relation monoids has also seen some study. Kambites' work \cite{Kambites2009} on small overlap conditions can be used to show that almost all one-relation monoids, in a  suitably defined sense, have decidable rational subset membership problem (cf. also \cite[p. 338]{NybergBrodda2021a}). Furthermore, the rational subsets of the bicyclic monoid $\Mpres{b,c}{bc=1}$ have been fully described by Render \& Kambites \cite{Render2009}, and more generally the rational subset membership problem in any $\Mpres{A}{w=1}$ with virtually free group of units is decidable \cite{NybergBrodda2022a}.  
Results of Adian and Oganesian \cite{Adian1966, Adian1976, Adian1987} imply that if membership in principal right ideals is decidable in all monoids of the form $\Mpres{a,b}{bUa=a}$ and $\Mpres{a,b}{bUa=aVa}$ then the word problem for all one-relation monoids is decidable (see also the related general statement \cite[Lemma~3.1]{Guba1997}). The principal right ideals of these monoids will not typically be finitely generated submonoids, but they are rational subsets. Motivated by this, in Section~\ref{Sec:Guba-type-monoids} we extend our investigation to the study of the rational subset membership problem in these two classes of one-relation monoids. Further motivation for studying this problem for monoids of the form $\Mpres{a,b}{bUa=a}$ comes from the result \cite[Theorem~4.1]{Guba1997} relating the word problem in these monoids to the membership problem in both principal left and principal right ideals. 
In Theorem~\ref{thm_UndecRatSubsetGubaType} we give an infinite family of monoids of the form $\Mpres{a,b}{bUa=a}$ all with undecidable rational subset membership problem. Then in Remark~\ref{rem_MoreNonSubspecialExamples} we explain how these examples can be adapted to give examples of the form $\Mpres{a,b}{bUa=aVa}$ with the same property.  To prove these results it is necessary for us to introduce new techniques since the only groups that monoids in these classes can embed are trivial groups, and hence the usual approach of embedding $A(P_4)$ is not possible.  For this we prove a new general result, Theorem~\ref{thm_LClassEmbeddingTraceMonoidImproved}, which shows that a left-cancellative monoid has undecidable rational subset membership problem if it embeds a copy of the trace monoid $T(P_4)$ that is generated by a set of elements that are all related by Green's $\mathcal{L}$-relation. 
This result is in some ways surprising since any trace monoid itself, unlike right-angled Artin groups, necessarily has decidable rational subset membership problem, so the way in which the trace monoid embeds is crucial. As another corollary to this new general result for left-cancellative monoids, we deduce (Corollary~\ref{cor_T4EmbedsInGroup}) that any group containing the trace monoid $T(P_4)$ has undecidable rational subset membership problem, and then explore some applications of this to proving the undecidability of the rational subset membership problem in groups.  

In Section~7 we shall apply the results of the previous section to the word problem for special inverse monoids. It was proved in \cite{gray2020undecidability} that there are special one-relator inverse monoids $\Ipres{A}{w=1}$ with undecidable word problem. In all known examples the word $w$ is not a reduced word, and it remains an open problem whether the word problem is decidable in that case. This is an important question since, if it is, then by \cite[Theorem~2.2]{Ivanov2001} this would imply that all one-relation monoids have decidable word problem. In particular it is open whether there is a positive one-relator inverse monoid with undecidable word problem. Motivated by this, in Section~7 we explain how the word problem for one-relation monoids also reduces to the word problem for positive $2$-relator special inverse monoids, and then in Theorem~\ref{thm_PosTwoRelatorGeneral} we show the existence of a $2$-relator special inverse monoid with undecidable word problem and in which both defining words are positive words. We use this result to prove the existence of a positive two-relator group with undecidable prefix membership problem in Corollary~\ref{cor_TwoRelator}.

\section{Preliminaries}\label{Sec2-Preliminaries}

In this section we fix some notation and recall some background definitions and results from geometric and combinatorial group and monoid theory. Inverse monoids will only be considered later in Section~\ref{Sec:Positive-inverse-monoids}, and we defer giving the more involved notation and definitions for inverse monoids to that section. For additional background we refer the reader to 
\cite{Magnus1966, lyndon1977combinatorial} for combinatorial group theory, \cite{Howie1995} for monoid and semigroup theory, \cite[Chapter~1]{Ruskuc1995} for monoid presentations, and 
\cite{Lawson1998} for the theory of inverse monoids. 

\subsection*{Monoid and group presentations}

For a non-empty alphabet $A$ we use $A^*$ to denote the \emph{free monoid} of all words over $A$ including the empty word which we denote by $\varepsilon$.    
A monoid presentation is a pair $\Mpres{A}{R}$ where $A$ is an alphabet and $R$ is a subset of $A^* \times A^*$. The monoid defined by this presentation is the quotient $A^* / \sigma$ of the free monoid by the congruence $\sigma$ on $A^*$ generated by $R$. We usually write a defining relation $(u,v) \in R$ as $u=v$. 
Similarly when working with a fixed monoid presentation $\Mpres{A}{R}$ given any two words $\alpha,\beta \in A^*$ we write $\alpha=\beta$ to mean that $\alpha$ and $\beta$ are $\sigma$-related, that is, they represent the same element of the monoid defined by the presentation. 
We write $\alpha \equiv \beta$ to mean that $\alpha$ and $\beta$ are equal as words in the free monoid $A^*$.       
A monoid presentation is called \emph{special} if all the defining relations are of the form $w=1$. 

A group presentation is a pair $\Gpres{A}{R}$ where $A$ is an alphabet and $R$ is a subset of  $(A \cup A^{-1})^* \times (A \cup A^{-1})^*$, where $A^{-1} = \{a^{-1} : a \in A \} $ is disjoint from $A$.     
The group defined by this presentation is then the quotient of the free group $F_A$ on $A$ by the normal subgroup generated by the set of all $uv^{-1}$ for $(u,v) \in R$. As for monoids, when working with a fixed group presentation we write $\alpha = \beta$ to mean that the words represent the same element of the group, and we write the defining relations in the form $u=v$.      

A one-relator monoid (also called one-relation monoid) is one defined by a presentation of the form $\Mpres{A}{u=v}$. Similarly a one-relator group is one that is defined by a presentation of the form $\Gpres{A}{w=1}$.  

We often abuse terminology by talking about the group $\Gpres{A}{R}$ or the monoid $\Mpres{A}{R}$ where we mean the group or monoid defined by the given presentation.   

Given a subset $X$ of a monoid we use $\Mgen{X}$ to denote the submonoid generated by $X$, and similarly if $X$ is a subset of a group then $\Ggen{X}$ denotes the subgroup generated by $X$.

\subsection*{Submonoid and rational subset membership problems}
The set of all \emph{rational subsets} of a monoid $M$ is the smallest subset of the power set of $M$ which contains 
all finite subsets of $M$, and which is closed under union, product, and Kleene hull.
Here the Kleene hull of a subset $L$ of a monoid $M$ is just the submonoid of $M$ generated by $L$.    
Clearly every finitely generated submonoid of $M$ is a rational subset.  
If $M$ is finitely generated by a set $A$, and $\phi:A^* \rightarrow M$ is the corresponding canonical homomorphism, then a subset $L \subseteq M$ is rational if and only if $L = \phi(K)$ for some rational subset $K$ of $A^*$. 
Since by Kleene's theorem the rational subsets of $A^*$ are the same as those that can be recognised by a finite state automaton, in this case $K$ is the language defined by some finite state automaton, and $L$ is the set of all elements of $M$ represented by words in that language.

Let $M$ be a monoid finitely generated by a set $A$, and let $\phi:A^* \rightarrow M$ be the corresponding canonical homomorphism. 
The \emph{submonoid membership problem} for $M$ is the following decision problem 

\begin{itemize} 
\item{\textsc{Input}:} A finite set of words $\Delta \subseteq A^*$ and a word $w \in A^*$   
\item{\textsc{Question}:} $\phi(w) \in \phi(\Delta^*)$?   
  \end{itemize}
Observe that $\phi(\Delta^*)$ is equal to the submonoid of $M$ generated by $\phi(\Delta)$.     
The decidability of this problem is independent of the choice of finite generating set for the monoid. For a group $G$ with finite generating set $A$, the set $A \cup A^{-1}$ is a finite monoid generating set for $G$, and then the submonoid membership problem for $G$ is defined as above, where $G$ is the monoid generated by $A \cup A^{-1}$.   
Let $L(\mathcal{A})$ denote the language recognised by a finite automaton $\mathcal{A}$. 
Then the \emph{rational subset membership problem} for $M$ is the decision problem  
\begin{itemize} 
\item{\textsc{Input}:} A finite automaton $\mathcal{A}$ over the alphabet $A$ and a word $w \in A^*$   
\item{\textsc{Question}:} $\phi(w) \in \phi(L(\mathcal{A}))$? 
  \end{itemize}
Note that by the Kleene Theorem, the input to the rational subset membership problem could alternatively be taken to be a rational expression over the alphabet $A$.
As for the submonoid membership problem, the rational subset membership problem also applies to groups where we view the group as a monoid generated by $A \cup A^{-1}$. As every ﬁnitely generated submonoid is a rational subset (being precisely the Kleene hull of a finite set), decidability of the rational subset membership problem implies decidability of the submonoid membership problem.

A priori, it may seem natural to assume that the rational subset membership problem ought to be strictly harder than the submonoid membership problem. However, whether this is actually the case remains an open problem; the problems may be equivalent, and there are some reasons to believe that they may be (see \cite{Lohrey2008}). 

In the two decision problems above, the submonoid (resp. rational subset) is part of the input. 
The non-uniform analogues of these problems can also be studied where one considers a fixed finitely generated submonoid (or rational subset) and asks whether there is an algorithm deciding membership in that particular subset. 
In general for any subset $S$ of $M$ by the 
\emph{membership problem for $S$ within $M$} we mean the decision problem:
\begin{itemize} 
\item{\textsc{Input}:} 
A word $w \in A^*$   
\item{\textsc{Question}:} $\phi(w) \in S$? 
  \end{itemize}
Similarly we talk about the membership problem for $S$ within $G$ where $G$ is a finitely generated group. This non-uniform version is sometimes called the \textit{weak} membership problem, with the uniform being called \textit{strong}, e.g. by Mikhailova \cite{Mikhailova1958} in the context of the subgroup membership problem.

Decidability of these problems is preserved when taking substructures in the following sense. 
Let $M$ be a finitely generated monoid and let $T$ be a finitely generated submonoid of $M$. 
If $M$ has decidable submonoid (resp. rational subset) membership problem then so does $T$. 
In addition to this, for any subset $S$ of $T$, if 
the membership problem for $S$ within $M$ is decidable then      
the membership problem for $S$ within $T$ is also decidable.
See \cite[\S5]{Lohrey2015} for more details on how to prove closure properties like these.    

\subsection*{RAAGs and trace monoids}

For a finite simplicial graph $\Gamma$ 
with vertex set $V\Gamma$  
we use $A(\Gamma)$ to denote the right-angled Artin group (abbreviated to RAAG) determined by $\Gamma$, so $A(\Gamma)$ is the group defined by the presentation 
\[
\Gpres{V\Gamma}{xy=yx \mbox{ \ if and only if $x$ is adjacent to $y$ in $\Gamma$} }.   
\]
We will use $T(\Gamma)$ to denote the corresponding \emph{trace monoid} defined by  
\[
\Mpres{V\Gamma}{xy=yx \mbox{ \ if and only if $x$ is adjacent to $y$ in $\Gamma$} }.   
\]
We shall now record some facts from the theory of trace monoids and RAAGs that we need later on. For more comprehensive background on RAAGs and trace monoids we refer the reader to \cite{Charney2007, Diekert1990}. It was proved by Paris \cite{Paris2002} that for any graph $\Gamma$ the identity map on $V\Gamma$ induces an embedding of the trace monoid $T(\Gamma)$ into the corresponding RAAG $A(\Gamma)$.  

Much is known about the behaviour of rational subsets of trace monoids (see e.g. \cite{Diekert1990}). In particular, since trace monoids are defined by presentations where the defining relations are all length preserving (so called `homogeneous presentations') it follows the any trace monoid has decidable rational subset membership problem. By contrast, not every RAAG has decidable rational subset membership. Indeed, Lohrey \& Steinberg \cite{Lohrey2008} proved that a RAAG $A(\Gamma)$ has decidable submonoid membership problem if and only if it has decidable rational subset membership problem if and only if $\Gamma$ does not embed the path $P_4$ with four vertices, or the square $C_4$ with four vertices, as an induced subgraph. A complete characterization of RAAGs with decidable subgroup membership problem is not, however, known; it is, for example, unknown whether $A(C_5)$ has decidable subgroup membership problem.

\subsection*{Inverse monoid presentations}

We just give the essential definitions from combinatorial inverse semigroup theory that we need. We refer the reader to \cite{Meakin2020} and \cite{Lawson1998} for a more comprehensive treatment of the subject.  

An inverse monoid $M$ is a monoid with the property that for every $m \in M$ there is a unique element $m^{-1} \in M$ satisfying $mm^{-1}m=m$ and $m^{-1}mm^{-1}=m^{-1}$.   
For any alphabet $A$ the \emph{free inverse monoid} over $A$ is the monoid defined by the presentation   
\[
\Mpres{A \cup A^{-1}}{uu^{-1}u=u,\ uu^{-1}vv^{-1}=vv^{-1}uu^{-1}\ (u,v\in\overline{A}^\ast)}
\]
where $(a^{-1})^{-1} = a$ and $(a_1 \ldots a_k)^{-1} = a_k^{-1} \ldots a_1^{-1}$.  
We use $\mathrm{FI_A}$ to denote the free inverse monoid on $A$.   
An inverse monoid presentation is a pair $\Ipres{A}{R}$ where $A$ is an alphabet and the set of defining relations $R$ is a subset of $(A \cup A^{-1})^\ast \times (A \cup A^{-1})^\ast$. 
The presentation $\Ipres{A}{R}$ defines the inverse monoid $\mathrm{FI_A}/\rho$ where $\rho$ is the congruence on the free inverse monoid $\mathrm{FI_A}$ generated by $R$. 
By a \emph{special inverse monoid} we mean one defined by a presentation where all the defining relations in the presentation are of the form $r=1$.  
The maximal group image of $\Ipres{A}{R}$ is the group $\Gpres{A}{R}$ defined by the same presentation, and the identity map on $A$ defines a surjective homomorphism from the inverse monoid onto its maximal group image.

\section{Membership Problems in Positive One-relator Groups}\label{Sec:SMMP-in-positive-OR-groups}

In this section, we construct positive one-relator groups with undecidable submonoid membership problem. 
As discussed in the previous section, the following one-relator groups:
\[
\Gpres{a,b}{[ab, ba] = 1}, \quad \text{resp. } \quad \pres{a,b}{[a^m, b^n] = 1} \: (m, n \geq 2),
\]
were shown to have undecidable submonoid membership problem in work of Gray \cite{gray2020undecidability} and Nyberg-Brodda \cite{nyberg2022diophantine}, respectively. 
However, it is a consequence of Lyndon's identity theorem that none of these groups admits a positive one-relator presentation. Indeed, it follows from Lyndon's identity theorem that any one-relator group of the form $\Gpres{A}{[u, v] = 1}$ has second homology group $H_2(G; \mathbb{Z}) \cong \mathbb{Z}$, while on the other hand any positive one-relator group has $H_2(G; \mathbb{Z}) = 0$; see \cite[II.4, Example 3]{Brown1982}. 

For $m,n \in \N$, define a class of groups by the presentation:
\reqnomode
\begin{equation}\label{Eq:G_mn-definition}
G_{m,n} = \Gpres{x,y}{x^m y^n = y^n x^{-m}}.
\end{equation}

\begin{remark}\label{G_{m,1} is BS(m,-m)}
If $n=1$ then
$G_{m,1} = \Gpres{x, y}{y x^m y^{-1} = x^{-m}}$ is the unimodular Baumslag-Solitar group $\operatorname{BS}(m, -m)$ which 
is known (see e.g. \cite[paragraph preceding Prop.~3.9]{nyberg2022diophantine}) to be virtually a direct product of $\Z$ and a finite rank free group, and hence $G_{m,1}$ has has decidable rational subset membership problem.  
\end{remark}

The key property of the groups $G_{m,n}$ is the following.

\begin{lemma}\label{lem_Gmn} 
The group
$G_{m,n}$
has decidable submonoid membership problem if and only if $m =1$ or $n=1$.
Furthermore, if $m, n \geq 2$ then $G_{m,n}$ contains a fixed finitely generated submonoid in which membership is undecidable.   
\end{lemma}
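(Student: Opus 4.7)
The plan is to prove both directions of the ``iff''. The case $n=1$ of the ``if'' direction is already recorded in Remark~\ref{G_{m,1} is BS(m,-m)}, where $G_{m,1}$ is identified with $\BS(m,-m)$. For the case $m=1$, my plan is to introduce an auxiliary generator $t=y^n$ and rewrite
\[
G_{1,n}\;\cong\;\bigl\langle x,t\bigm| txt^{-1}=x^{-1}\bigr\rangle\;\ast_{\langle t\rangle=\langle y^n\rangle}\;\langle y\rangle,
\]
a cyclic amalgamation of $\BS(1,-1)$ (virtually $\Z^2$) with $\Z$ over an infinite cyclic edge group. Decidability of submonoid (in fact rational subset) membership then follows from standard closure properties for amalgamated products with cyclic edge groups over virtually abelian vertex groups, as surveyed in \cite{Lohrey2015}.

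The substance of the lemma is the undecidability when $m,n\geq 2$ and the strengthening to a single fixed undecidable submonoid. My strategy is to exhibit an embedding $A(P_4)\hookrightarrow G_{m,n}$ and apply the Lohrey--Steinberg theorem \cite{Lohrey2008}. Since $A(P_4)$ contains $A(C_4)\cong F_2\times F_2$, in which Mikhailova's classical construction produces a fixed finitely generated submonoid of undecidable membership, the image of such a submonoid under the embedding provides the fixed witness required for the ``furthermore'' clause.

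To build the embedding, I would first extract the essential algebraic consequences of the relation $x^m y^n=y^n x^{-m}$: conjugation by $y^n$ inverts $x^m$, hence $[x^m,y^{2n}]=1$, giving a rank-two free abelian subgroup $\langle x^m,y^{2n}\rangle\cong\Z^2$. I would then exploit the HNN decomposition obtained by introducing $x_i=y^{-i}xy^i$ for $0\leq i\leq n$, producing
\[
G_{m,n}\;\cong\;\bigl\langle x_0,\ldots,x_n\bigm| x_n^m=x_0^{-m}\bigr\rangle\ast_{\varphi},
\]
an HNN extension with stable letter $y$ and associated subgroups $\langle x_0,\ldots,x_{n-1}\rangle$, $\langle x_1,\ldots,x_n\rangle$. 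Candidate generators for $A(P_4)=\langle a,b,c,d\mid [a,b],[b,c],[c,d]\rangle$ would be chosen among the $x_i^m$ and $y^{2n}$ together with suitable short conjugates by $y$; the three required commutations would be verified by direct computation from the defining relation, while the three required non-commutations would be verified using Britton's lemma in the HNN decomposition.

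The principal obstacle, as flagged in the introduction, is precisely that any copy of $A(P_4)$ produced this way has infinite index in $G_{m,n}$ (the quotient by the normal closure of $x^m$ collapses to $(\Z/m)\ast\Z$), so Reidemeister--Schreier rewriting over a finite-index subgroup is unavailable. The hardest step will therefore be the Britton-type normal form analysis inside the HNN extension needed to rule out all spurious relations among the chosen generators and confirm that the subgroup they generate is isomorphic to $A(P_4)$ rather than a proper quotient.
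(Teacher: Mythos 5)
Your top-level architecture for the case $m,n\geq 2$ matches the paper's (embed $A(P_4)$ into $G_{m,n}$ and invoke Lohrey--Steinberg \cite{Lohrey2008}), but two of your supporting claims are wrong and the decisive step is not carried out. First, your justification of the ``fixed submonoid'' clause rests on a false statement: $A(P_4)$ does \emph{not} contain $A(C_4)\cong F_2\times F_2$. A RAAG $A(\Gamma)$ contains $F_2\times F_2$ only when $\Gamma$ has an induced square, and $P_4$ is a tree, so Mikhailova's construction is unavailable. The conclusion is still salvageable, because \cite[Theorem~2]{Lohrey2008} directly produces a fixed finitely generated submonoid of $A(P_4)$ itself with undecidable membership --- this is exactly what the paper cites --- but the route you describe does not exist. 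Second, for $m=1$ your decomposition $G_{1,n}\cong \BS(1,-1)\ast_{\Z}\Z$ is correct, but the ``standard closure properties'' you invoke are not available: decidability of rational subset membership is known to be preserved by amalgamation over \emph{finite} edge groups, and it already fails for amalgamation over $\Z^2$ (indeed $A(P_4)\cong (F_2\times\Z)\ast_{\Z^2}(F_2\times\Z)$), so there is no citable general theorem for infinite cyclic edge groups to lean on. The paper instead observes (Remark~\ref{G_{1,n} finite extension of a RAAG}) that the index-$2n$ subgroup $K_{1,n}$ of Lemma~\ref{2n index subgroup presentation} is $\Z\times F_n$, so $G_{1,n}$ is virtually a tree RAAG and decidability follows from \cite{Lohrey2008}.

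Third, and most seriously, the embedding $A(P_4)\hookrightarrow G_{m,n}$ --- the entire content of the lemma --- is left as a plan. You correctly note that the relevant subgroup has infinite index and that the difficulty is ruling out a proper quotient, but you then defer exactly that step. Note also that verifying the three ``non-commutations'' would be far from sufficient: one must show that \emph{every} word nontrivial in $A(P_4)$ has nontrivial image, and since your candidate generators include conjugates of $y^{2n}$, a word in them expands in the Magnus--Moldavanskii HNN decomposition to a long alternating product of $y^{\pm 1}$ and base-group elements, so the Britton's lemma bookkeeping is genuinely delicate. The paper takes a different, more modular route: a Reidemeister--Schreier computation for the index-$2n$ normal subgroup $K_{m,n}$ (Lemma~\ref{2n index subgroup presentation}), a retraction argument embedding the right-angled Baumslag--Solitar--Artin group $\operatorname{B}(S_{n,m})$ into $K_{m,n}$ (Lemma~\ref{B(S_{m,n}) injects in G_{m,n}}), and then the chain of embeddings $A(P_4)\hookrightarrow \operatorname{B}(P_{3,m})\hookrightarrow \operatorname{B}(S_{n,m})$ imported from \cite{nyberg2022diophantine} (Lemma~\ref{A(P_4) inside G_{m,n}}). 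If you want to avoid relying on \cite{nyberg2022diophantine}, you must actually execute the normal-form analysis you sketch; as written, the proposal does not prove the lemma.
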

\begin{proof}
Decidability for $n = 1$ is discussed in Remark \ref{G_{m,1} is BS(m,-m)}, while the case for $m=1$ will be covered in Remark \ref{G_{1,n} finite extension of a RAAG}. For $m,n \geq 2$, we will obtain an injection $i \colon A(P_4) \longrightarrow G_{m,n}$ in Lemma \ref{A(P_4) inside G_{m,n}}. In \cite{Lohrey2008} it is shown that $A(P_4)$ contains a fixed submonoid where membership is undecidable. So, if $m,n \geq 2$, then the submonoid membership problem is undecidable in $G_{m,n}$.
\end{proof}

Thus to prove the above lemma, we must prove Lemma~\ref{A(P_4) inside G_{m,n}}. We do this by a Reidemeister--Schreier rewriting procedure.

\begin{lemma}\label{2n index subgroup presentation}
The subgroup $K_{m,n} = \Ggen{y^{2n}, y^i x y^{-i} \text{ for $0 \leq i \leq 2n - 1$}}$ of $G_{m,n}$, has the following presentation:
\[
\Gpres{\beta,\, \alpha_i \text{ for $0 \leq i \leq 2n - 1$}}{\alpha_i^m \alpha_{i+n}^m=1,\, [\alpha_i^m, \beta]=1 \text{ for $0 \leq i \leq n - 1$}}
\]
where $\beta$ corresponds to $y^{2n}$, and $\alpha_i$ corresponds to $y^i x y^{-i}$ for all $0 \leq i \leq 2n - 1$.    
\end{lemma}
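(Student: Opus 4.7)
The plan is to apply Reidemeister--Schreier rewriting to the obvious index-$2n$ subgroup, then simplify via Tietze transformations. First I would observe that the relator $x^m y^n x^m y^{-n}$ has $y$-exponent sum zero, so the map $\phi\colon G_{m,n}\to \Z/2n\Z$ sending $x\mapsto 0$ and $y\mapsto 1$ is well-defined, and that $T=\{1,y,\ldots,y^{2n-1}\}$ is a Schreier transversal for $\ker\phi$. The nontrivial Schreier generators are exactly $y^i x y^{-i}$ for $0\le i\le 2n-1$ (coming from reading an $x$ in coset $y^i$) together with $y^{2n}$ (coming from the single wrap-around $y^{2n-1}\cdot y$); all other Schreier generators are trivial. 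Hence $K_{m,n}=\ker\phi$ and is freely generated as a subset by $\beta$ and the $\alpha_i$.

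Next I would compute the Reidemeister--Schreier rewrite $\tau(y^i r y^{-i})$ of the single defining relator $r=x^m y^n x^m y^{-n}$ under each transversal element $y^i$. The key bookkeeping rule is: while in coset $y^j$, reading an $x$ contributes $\alpha_j$, and reading a $y^{\pm 1}$ contributes nothing \emph{unless} the coset index wraps between $y^{2n-1}$ and $y^0$, in which case it contributes $\beta^{\pm 1}$. Since $0\le i\le 2n-1$, the initial $y^i$ and trailing $y^{-i}$ pass through no wraps. The interior $y^n$ crosses the boundary precisely when $i\ge n$, and in that case the subsequent $y^{-n}$ (inside $r$) also crosses it once. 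A careful count yields
\[
\tau(y^i r y^{-i})=
\begin{cases}
\alpha_i^m\,\alpha_{i+n}^m, & 0\le i\le n-1, \\
\alpha_i^m\,\beta\,\alpha_{i-n}^m\,\beta^{-1}, & n\le i\le 2n-1.
\end{cases}
\]

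Setting each of these words to $1$ gives a Reidemeister--Schreier presentation of $K_{m,n}$ on the $2n+1$ generators $\beta,\alpha_0,\ldots,\alpha_{2n-1}$ with $2n$ defining relations. Finally I would apply Tietze transformations to the second family: for $n\le i\le 2n-1$, writing $i=j+n$ with $0\le j\le n-1$ and substituting $\alpha_{j+n}^m=\alpha_j^{-m}$ from the first family converts the relation $\alpha_{j+n}^m\beta\alpha_j^m\beta^{-1}=1$ into $[\alpha_j^m,\beta]=1$. This is exactly the presentation claimed in the lemma.

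The only delicate point is the wrap-around bookkeeping in the rewriting step; the rest is a routine application of the Reidemeister--Schreier theorem together with a single round of Tietze transformations.
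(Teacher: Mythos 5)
Your proposal is correct and follows essentially the same route as the paper: Reidemeister--Schreier with the transversal $\{1,y,\ldots,y^{2n-1}\}$, yielding generators $\beta=y^{2n}$, $\alpha_i=y^ixy^{-i}$ and the two relation families $\alpha_i^m\alpha_{i+n}^m$ ($0\le i\le n-1$) and $\alpha_i^m\beta\alpha_{i-n}^m\beta^{-1}$ ($n\le i\le 2n-1$), followed by the same substitution $\alpha_{i+n}^m=\alpha_i^{-m}$ to convert the second family into commutators. Your wrap-around bookkeeping for the interior $y^{\pm n}$ blocks matches the paper's direct computation of $y^iRy^{-i}$.
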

\begin{proof}
Note that $K = K_{m,n}$ is normal in $G = G_{m,n}$, as it is closed under conjugation by both $x$ and $y$, with quotient equal to:
\[
G/K = \Gpres{y}{y^{2n}=1} \cong \mathcal{C}_{2n} = \text{ cyclic group of order $2n$}.
\]
We proceed to find a presentation for $K$ using the Reidemeister-Schreier procedure (see \cite{lyndon1977combinatorial}) with 
\[
G = \Gpres{S}{R}, \text{ where } S = \{x,y\}, \text{ and } R = x^m y^n x^m y^{-n}.
\]
A Schreier transversal for $K$ in $G$ is
$T = \{\, y^i \mid 0 \leq i \leq 2n - 1 \,\}$, giving a set of generators for $K$ as $U = \{ts(\overline{ts})^{-1}\mid t\in T,\; s \in S,\; ts \not\in T \}$ where $\overline{w}$ is the representative of $w$ in~$T$. 

Any $t\in T$ can be written as $y^i$ for $0 \leq i \leq 2n-1$, so:
\[
ts(\overline{ts})^{-1} = y^{i}s(\overline{y^{i}s})^{-1} = 
\begin{cases}
    y^{i}x(\overline{y^{i}x})^{-1} & \text{if } s = x,\\
    y^{i}y(\overline{y^{i}y})^{-1} & \text{if } s = y.
\end{cases}
\]
As $x\in K$, one has $\overline{y^{i}x} = y^i$ for all $0 \leq i \leq 2n-1$. Also $\overline{y^{i}y} = y^{i+1}$ for $0 \leq i \leq 2n-2$ and $\overline{y^{2n-1}y} = 1$. One obtains $\beta = y^{2n}$ and $\alpha_i = y^ixy^{-i}$ for $0 \leq i \leq 2n-1$ as generators of~$K$; hence
\[
U = \{\, \beta = y^{2n},\, \alpha_i = y^ixy^{-i} \text{ for $0 \leq i \leq 2n-1$}\,\},
\]
gives a set of generators for $K$.
     
To get relations for $K$, rewrite $tRt^{-1}$ for $t\in T$ and $R = x^m y^n x^m y^{-n}$, using generators in~$U$.

Write any $t\in T$ as $y^i$ for some $0 \leq i \leq 2n-1$. One obtains:
\begin{align*}
tRt^{-1} = y^i(x^m y^n x^m y^{-n})y^{-i} & = 
    \begin{cases}
    (y^i x^m y^{-i}) (y^{i + n} x^m y^{-i - n}) & \text{if } 0 \leq i \leq n - 1 \\
    (y^i x^m y^{-i}) y^{2n} (y^{i - n} x^m y^{-i + n}) y^{-2n} & \text{if } n \leq i \leq 2n - 1
\end{cases}
\\
& = 
\begin{cases}
    \alpha_i^m \alpha_{i+n}^m & \text{if } 0 \leq i \leq n - 1, \\
    \alpha_i^m \beta \alpha_{i-n}^m \beta^{-1} & \text{if } n \leq i \leq 2n - 1.
\end{cases}
\end{align*}
The first relation gives $\alpha_{i+n}^m = \alpha_i^{-m}$ for any $0 \leq i \leq n - 1$; substituting it in the second line, one obtains the relation $[\alpha_i^m, \beta]$ for all $0 \leq i \leq n - 1$.

So, $
V = \{\, \alpha_i^m \alpha_{i+n}^m,\, [\alpha_i^m, \beta] \mid 0 \leq i \leq n - 1 \,\}$ gives the relations of $K$, as desired.
\end{proof}

\begin{remark}\label{G_{1,n} finite extension of a RAAG}
Note that for $m=1$, the subgroup $K_{1,n}$ is actually a RAAG, isomorphic to $\Z \times F_n$. In this case, the  group $G_{1,n}$ is a finite extension of $\Z \times F_n$, implying that it has decidable rational subset membership problem (so, a fortiori, also decidable submonoid membership problem), see \cite{Lohrey2008}).
\end{remark}

Section 3 of article \cite{nyberg2022diophantine}, treats a class of groups called {\it right-angled Baumslag-Solitar-Artin groups}, for which the problem of decidable submonoid membership is studied. 
One particular example is the family $\operatorname{B}(S_{n,m})$ with $n,m \in \Z$, where $m,n \geq 0$, defined as:
\begin{equation}\label{B(S_{n,m})}
\operatorname{B}(S_{n,m}) = \Gpres{d, c_i \text{ for $0 \leq i \leq n - 1$}}{[c_i^m, d]=1 \text{ for $0 \leq i \leq n - 1$}}.
\end{equation}
Another example is the family $B(P_{3,m})$ with $m \in \Z$ where $m \geq 0$, defined as:    
\begin{equation}\label{EqP3m}
\operatorname{B}(P_{3,m}) = \Gpres{w_0,w_1,w_2}{[w_0,w_2]=1, [w_0,w_1^m]=1}.
\end{equation}

\begin{lemma}\label{B(S_{m,n}) injects in G_{m,n}}
The group $\operatorname{B}(S_{n,m})$ injects in~$G_{m,n}$. In particular, the map 
\begin{equation}\label{Eq:Sigma-embedding}
\sigma \colon d \mapsto y^{2n}, c_i \mapsto y^ixy^{-i} \: (0 \leq i \leq n)
\end{equation}
defines an injective homomorphism $\sigma \colon \operatorname{B}(S_{n,m}) \to G_{m,n}$.
\end{lemma}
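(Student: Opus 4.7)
The plan is to build a retraction from $K_{m,n}$ onto $\operatorname{B}(S_{n,m})$ and conclude injectivity of $\sigma$ from the existence of a left inverse.

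First, I would verify that $\sigma$ is a well-defined homomorphism factoring through the subgroup $K_{m,n} \leq G_{m,n}$ from Lemma~\ref{2n index subgroup presentation}. Each of the elements $y^{2n}$ and $y^i x y^{-i}$ (for $0 \leq i \leq n-1$) is by definition a Schreier generator of $K_{m,n}$, namely $\beta$ and $\alpha_i$ respectively. The images under $\sigma$ of the defining relations $[c_i^m, d] = 1$ of $\operatorname{B}(S_{n,m})$ are exactly the relators $[\alpha_i^m, \beta] = 1$, which hold in $K_{m,n}$ by Lemma~\ref{2n index subgroup presentation}, so $\sigma$ is a bona fide homomorphism into $K_{m,n}$.

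The main step is to construct a retraction $\rho \colon K_{m,n} \to \operatorname{B}(S_{n,m})$ that is left-inverse to $\sigma$. Using the presentation of $K_{m,n}$ supplied by Lemma~\ref{2n index subgroup presentation}, I would set $\rho(\beta) = d$ and $\rho(\alpha_i) = c_i$ for $0 \leq i \leq n-1$. To handle the remaining Schreier generators $\alpha_{i+n}$ (for $0 \leq i \leq n-1$), which are not in the image of $\sigma$, I exploit the relation $\alpha_i^m \alpha_{i+n}^m = 1$: it forces $\rho(\alpha_{i+n})^m = c_i^{-m}$, and the simplest element of $\operatorname{B}(S_{n,m})$ with $m$-th power equal to $c_i^{-m}$ is $c_i^{-1}$. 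Setting $\rho(\alpha_{i+n}) = c_i^{-1}$, both families of defining relations are verified to be killed: $\rho(\alpha_i^m \alpha_{i+n}^m) = c_i^m c_i^{-m} = 1$, and $\rho([\alpha_i^m, \beta]) = [c_i^m, d] = 1$ is already a defining relator of $\operatorname{B}(S_{n,m})$.

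Finally, since $\rho \circ \sigma$ sends each generator of $\operatorname{B}(S_{n,m})$ to itself, it is the identity on $\operatorname{B}(S_{n,m})$, so $\sigma$ is injective into $K_{m,n}$, and hence into $G_{m,n}$. The only point requiring genuine thought is deciding where to send the extra generators $\alpha_{n}, \ldots, \alpha_{2n-1}$; everything else is routine bookkeeping. I do not anticipate any serious obstacle.
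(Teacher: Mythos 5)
Your proposal is correct and is essentially identical to the paper's own proof: both exhibit $\sigma$ as landing in the subgroup $K_{m,n}$ presented in Lemma~\ref{2n index subgroup presentation} and then construct the same retraction $\rho$ with $\rho(\beta)=d$, $\rho(\alpha_i)=c_i$, $\rho(\alpha_{i+n})=c_i^{-1}$, concluding injectivity from $\rho\circ\sigma=\mathrm{id}$. No gaps.
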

\begin{proof}
Recall from Lemma~\ref{2n index subgroup presentation} 
that the subgroup $K_{m,n} = \Ggen{y^{2n}, y^i x y^{-i} \text{ for $0 \leq i \leq 2n - 1$}}$ of $G_{m,n}$, has the following presentation:
\[
\Gpres{\beta,\, \alpha_i \text{ for $0 \leq i \leq 2n - 1$}}{\alpha_i^m \alpha_{i+n}^m=1,\, [\alpha_i^m, \beta]=1 \text{ for $0 \leq i \leq n - 1$}}
\]
where $\beta$ corresponds to $y^{2n}$, and $\alpha_i$ corresponds to $y^i x y^{-i}$ for all $0 \leq i \leq 2n - 1$.    
Let $K \cong K_{m,n} $ denote the group defined by the above presentation, let $B = \operatorname{B}(S_{n,m})$, and 
consider the two maps:
\begin{align*}
s: B \longrightarrow K\, & \text{ given by } s(d) = \beta,\, s(c_i) = \alpha_i\, \text{ for any } 0 \leq i \leq n - 1,\\
\rho: K \longrightarrow B\, & \text{ given by } \rho(\beta) = d,\, \rho(\alpha_i) = c_i,\, \rho(\alpha_{i+n}) = c_i^{-1}\, \text{ for any } 0 \leq i \leq n - 1.
\end{align*}
Obviously, both $s$ and $\rho$ induce well-defined homomorphisms, as they respect the relations. Moreover, one has $\rho\circ s = \text{id}_B$, which implies that $s$ is injective. 
It follows that there is an injective homomorphism  
$\sigma:\operatorname{B}(S_{n,m}) \hookrightarrow G_{m,n}$ 
that maps $d \mapsto y^{2n}$ and $c_i \mapsto y^i x y^{-i}$ for all $0 \leq i \leq n - 1$.     
\end{proof}

\begin{lemma}\label{A(P_4) inside G_{m,n}}
There exists an embedding $i\colon A(P_4) \hookrightarrow G_{m,n}$ 
from 
\[
A(P_4) = \Gpres{A, B, C, D}{AB = BA, BC = CB, CD = DC} 
\]
into $G_{m,n}$ given by 
\[
i(A) = y x^m y^{-1},\, i(B) = y^{2n},\, i(C) = x^m,\, i(D) = x y^{2n} x^{-1}.
\]
\end{lemma}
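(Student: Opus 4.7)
The plan is to factor $i$ as $i = \sigma \circ j$, where $\sigma \colon \operatorname{B}(S_{n,m}) \hookrightarrow G_{m,n}$ is the embedding from Lemma~\ref{B(S_{m,n}) injects in G_{m,n}} and $j \colon A(P_4) \to \operatorname{B}(S_{n,m})$ is defined on generators by $A \mapsto c_1^m$, $B \mapsto d$, $C \mapsto c_0^m$, $D \mapsto c_0 d c_0^{-1}$. A direct check using $\sigma(c_i) = y^i x y^{-i}$ and $\sigma(d) = y^{2n}$ confirms $\sigma \circ j = i$: for instance $\sigma(c_1^m) = (yxy^{-1})^m = yx^m y^{-1}$ and $\sigma(c_0 d c_0^{-1}) = xy^{2n}x^{-1}$. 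Since $\sigma$ is injective, it suffices to prove $j$ is injective; we work throughout in the case $m,n \geq 2$, which is the one in which the lemma is needed. Well-definedness of $j$ is immediate: $[c_1^m, d] = [c_0^m, d] = 1$ are defining relations of $\operatorname{B}(S_{n,m})$, and $[c_0^m, c_0 d c_0^{-1}] = c_0[c_0^m, d]c_0^{-1} = 1$.

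For injectivity I will deduce the structure of $\operatorname{im}(j)$ from the amalgamated free product structure of $\operatorname{B}(S_{n,m})$. Standard Bass--Serre arguments identify the subgroup $\langle c_0, c_1, d\rangle \leq \operatorname{B}(S_{n,m})$ with $\operatorname{B}(S_{2,m}) \cong G_0 *_{\langle d\rangle} G_1$, where $G_i = \langle c_i, d \mid [c_i^m, d] = 1\rangle$. Inside this amalgam, set $L = \langle c_1^m, d\rangle \leq G_1$ and $M = \langle c_0^m, d, c_0 d c_0^{-1}\rangle \leq G_0$. Clearly $L \cong \mathbb{Z}^2$. To analyse $M$, use the further decomposition $G_0 = \langle c_0\rangle *_{\langle c_0^m\rangle} \langle c_0^m, d\rangle$ and apply the subgroup theorem for amalgamated free products to the Bass--Serre action of $M$ on the corresponding tree; an abelianization argument (using $m \geq 2$ to show $M$ surjects onto $m\mathbb{Z} \times \mathbb{Z}$ in $G_0^{\mathrm{ab}} = \mathbb{Z}^2$) verifies that the vertices $v = \langle c_0^m, d\rangle$ and $c_0 v$ lie in distinct $M$-orbits. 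This yields
\[
M \;\cong\; \langle c_0^m, d\rangle \;*_{\langle c_0^m\rangle}\; \langle c_0^m, c_0 d c_0^{-1}\rangle \;\cong\; \mathbb{Z}^2 *_\mathbb{Z} \mathbb{Z}^2,
\]
with presentation $\langle c_0^m, d, c_0 d c_0^{-1} \mid [c_0^m, d] = [c_0^m, c_0 d c_0^{-1}] = 1\rangle$.

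Finally, since $L \subseteq G_1$, $M \subseteq G_0$, $L \cap M \subseteq G_0 \cap G_1 = \langle d\rangle$, and both $L$ and $M$ contain $\langle d\rangle$, the standard subgroup theorem for amalgamated free products gives $\operatorname{im}(j) = \langle L, M\rangle \cong L *_{\langle d\rangle} M$. Combining the presentations of $L$ and $M$ over the shared $\langle d\rangle$ produces the presentation
\[
\langle c_1^m, d, c_0^m, c_0 d c_0^{-1} \mid [c_1^m, d] = [c_0^m, d] = [c_0^m, c_0 d c_0^{-1}] = 1\rangle,
\]
which under the relabelling $A = c_1^m$, $B = d$, $C = c_0^m$, $D = c_0 d c_0^{-1}$ is exactly the standard presentation of $A(P_4)$. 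Hence $j$ is an isomorphism onto its image, in particular injective, so $i = \sigma \circ j$ is an embedding. The main technical effort is the two-stage Bass--Serre computation: first realising $M$ as the non-trivial amalgam $\mathbb{Z}^2 *_\mathbb{Z} \mathbb{Z}^2$ inside $G_0$ (where verifying that $v$ and $c_0 v$ are not $M$-identified is the key step), and then amalgamating $L$ and $M$ over $\langle d\rangle$ inside $\operatorname{B}(S_{2,m})$.
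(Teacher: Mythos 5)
Your proposal is correct, and it reaches the lemma by a genuinely different route for the key step. Both you and the paper factor $i=\sigma\circ j$ with the same $\sigma$ from Lemma~\ref{B(S_{m,n}) injects in G_{m,n}} and the same $j(A)=c_1^m$, $j(B)=d$, $j(C)=c_0^m$, $j(D)=c_0dc_0^{-1}$; the difference is how injectivity of $j$ is established. The paper deduces it by citing and chaining \cite[Propositions~3.3 and 3.5]{nyberg2022diophantine}, passing through the intermediate group $\operatorname{B}(P_{3,m})$ generated by $c_1^m$, $d$, $c_0$ (and, for the last step, a generalisation of Proposition~3.3 that is only described in prose there). You instead give a self-contained two-stage amalgamated-product computation: first $M=\langle c_0^m,d,c_0dc_0^{-1}\rangle\cong\mathbb{Z}^2*_{\mathbb{Z}}\mathbb{Z}^2$ inside $G_0=\langle c_0\rangle*_{\langle c_0^m\rangle}\langle c_0^m,d\rangle$, where your abelianization check that $c_0\notin M$ (so the vertices $v$ and $c_0v$ lie in distinct $M$-orbits) is exactly the point requiring $m\geq 2$ --- equivalently, since $c_0\langle c_0^m\rangle c_0^{-1}=\langle c_0^m\rangle$, one can verify the same conclusion directly from the normal form theorem for $\langle\, P,\,c_0Pc_0^{-1}\,\rangle$ with $P=\langle c_0^m,d\rangle$; and then $\langle L,M\rangle\cong L*_{\langle d\rangle}M$ inside $G_0*_{\langle d\rangle}G_1$, which is the standard fact that subgroups of the two factors each containing the amalgamated subgroup generate their amalgam. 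Assembling the presentations yields $A(P_4)$ with consecutive vertices $c_1^m$, $d$, $c_0^m$, $c_0dc_0^{-1}$, matching the paper. Your approach buys independence from the external reference at the cost of some suppressed (but standard) Bass--Serre bookkeeping, e.g.\ the identification $\langle c_0,c_1,d\rangle\cong\operatorname{B}(S_{2,m})$ inside $\operatorname{B}(S_{n,m})$ and the exact vertex-stabilizer computations for $M$; the paper's approach buys brevity and stays within the RABSAG framework it uses elsewhere. I see no gap.
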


\begin{proof}
From the proof of Proposition 3.5 in \cite{nyberg2022diophantine} (see also Proposition 3.3 there), one obtains an embedding of $A(P_4)$ in $\operatorname{B}(S_{n,m})$ for $n, m \geq 2$. One such embedding is the following:
\[
j: \Gpres{A, B, C, D}{AB = BA, BC = CB, CD = DC} \hookrightarrow \operatorname{B}(S_{n,m}),
\]
given by $j(A) = c_1^m,\, j(B) = d,\, j(C) = c_0^m,\, j(D) = c_0dc_0^{-1}$. Indeed, to show that $j$ is an embedding, we note that by Lemma \ref{B(S_{m,n}) injects in G_{m,n}} the group $\operatorname{B}(S_{n,m})$ injects into $G_{m,n}$, where 
\[
\operatorname{B}(S_{n,m}) = \Gpres{d, c_i \text{ for $0 \leq i \leq n - 1$}}{[c_i^m, d]=1 \text{ for $0 \leq i \leq n - 1$}},
\]
with $d \mapsto y^{2n}$, and for all $0 \leq i \leq n - 1$ one has $c_i \mapsto y^ixy^{-i}$. In particular the RABSAG $\operatorname{B}(S_{2,m})$ (in the sense of \cite{nyberg2022diophantine}) defined by the graph
\[
\begin{tikzpicture}[>=stealth',thick,scale=0.8,el/.style = {inner sep=2pt, align=left, sloped}]%

\node (w0)[label=below:$d$][circle, draw, fill=black!50,
                        inner sep=0pt, minimum width=8pt] at (2,-1) {};
\node (w1)[label=below:$c_1$][circle, draw, fill=black!50,
                        inner sep=0pt, minimum width=8pt] at (0,-1) {};
\node (w2)[label=below:$c_0$][circle, draw, fill=black!50,
                        inner sep=0pt, minimum width=8pt] at (4,-1) {};
                        
\path[->](w0)  edge node[above]{$m$}         (w1);
\path[->](w0)  edge node[above]{$m$}         (w2);
\end{tikzpicture}
\]
injects in $\operatorname{B}(S_{n,m})$.  By \cite[Proposition~3.5]{nyberg2022diophantine} and its proof, the group $\operatorname{B}(P_{3,m})$ injects into $\operatorname{B}(S_{n,m})$, with one embedding being via the RABSAG defined by the graph
\[
\begin{tikzpicture}[>=stealth',thick,scale=0.8,el/.style = {inner sep=2pt, align=left, sloped}]%

\node (w0)[label=below:$d$][circle, draw, fill=black!50,
                        inner sep=0pt, minimum width=8pt] at (2,-1) {};
\node (w1)[label=below:$c_1^m$][circle, draw, fill=black!50,
                        inner sep=0pt, minimum width=8pt] at (0,-1) {};
\node (w2)[label=below:$c_0$][circle, draw, fill=black!50,
                        inner sep=0pt, minimum width=8pt] at (4,-1) {};
                        
\path[-](w0)  edge node[above]{}         (w1);
\path[->](w0)  edge node[above]{$m$}         (w2);
\end{tikzpicture}
\]      
Here, we have abused notation by letting the vertices of the graph $P_{3,m}$ above be labelled by their images in $\operatorname{B}(S_{n,m})$ under the prescribed embedding. That is, inside $\operatorname{B}(S_{n,m})$, defined as above, the elements $c_1^m, d$, and $c_0$ generate an isomorphic copy of $\operatorname{B}(P_{3,m})$
where, in terms of the presentation given in equation~\eqref{EqP3m}, this isomorphism is given by mapping $w_0$ to $d$, $w_2$ to $c_1^m$, and $w_1$ to $c_0$. 
Continuing, and using the analogous slight abuse of notation also for RAAGs, the RAAG defined by the graph 
\[
\begin{tikzpicture}[>=stealth',thick,scale=0.8,el/.style = {inner sep=2pt, align=left, sloped}]%

\node (w0)[label=below:$c_1^m$][circle, draw, fill=black!50,
                        inner sep=0pt, minimum width=8pt] at (0,0) {};
\node (w1)[label=below:$d$][circle, draw, fill=black!50,
                        inner sep=0pt, minimum width=8pt] at (2,0) {};
\node (w2)[label=below:$c_0^m$][circle, draw, fill=black!50,
                        inner sep=0pt, minimum width=8pt] at (4,0) {};
\node (w3)[label=below:$c_0dc_0^{-1}$][circle, draw, fill=black!50,
                        inner sep=0pt, minimum width=8pt] at (6,0) {};
                        
\path[-](w0)  edge node[above]{}         (w1)
		(w1)  edge node[above]{}         (w2)
		(w2)  edge node[above]{}         (w3);
\end{tikzpicture}
\]
now embeds in our chosen copy of $\operatorname{B}(P_{3,m})$ by unpacking the proof of \cite[Proposition~3.3]{nyberg2022diophantine} together with the generalisation of that result explained in \cite{nyberg2022diophantine} in the paragraph immediately after the proof of \cite[Proposition~3.3]{nyberg2022diophantine}.  
That is, inside $\operatorname{B}(S_{n,m})$ as above, the elements $c_1^m, d, c_0^m$, and $c_0dc_0^{-1}$ generate a copy of $A(P_4)$. It follows that $j$ is indeed an embedding of $A(P_4)$ into $\operatorname{B}(S_{n,m})$.

Now the composition $i = \sigma \circ j$ where $\sigma$ is the injection $\sigma \colon \operatorname{B}(S_{n,m}) \hookrightarrow G_{m,n}$ defined by \eqref{Eq:Sigma-embedding}, from Lemma \ref{B(S_{m,n}) injects in G_{m,n}}, gives the desired embedding $i\colon A(P_4) \hookrightarrow G_{m,n}$.
\end{proof}

Putting it all together, we have now shown Lemma~\ref{lem_Gmn}. In particular, for every $m, n \geq 2$ the one-relator group $G_{m,n}$ has undecidable submonoid membership problem. Importantly, these groups also have the following property:

\begin{lemma}\label{lem_key} 
For every $m, n \geq 1$, the group $G_{m,n}$ is a positive one-relator group. 
\end{lemma}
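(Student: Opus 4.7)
The plan is to produce an explicit positive one-relator presentation of $G_{m,n}$ by performing a change of free basis of $F(x,y)$ that turns the defining relator into a positive word. The motivating idea comes from the Klein bottle group $G_{1,1}$, which admits the positive presentation $\Gpres{s,t}{s^2 t^2 = 1}$ obtained from the substitution $s = yx$, $t = y^{-1}$; the natural generalisation here is to set $s := y^n x$ and $t := y^{-1}$.

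First I would verify that $\{s,t\}$ is a free basis of $F(x,y)$. From the definitions one recovers $y = t^{-1}$ and $x = y^{-n}(y^n x) = t^n s$, so $\langle s, t\rangle = \langle x,y\rangle = F_2$; since $F_2$ is Hopfian, any two-element generating set is automatically a basis, so this substitution defines an automorphism of $F_2$.

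Next I would compute the image of the defining relator $r = x^m y^n x^m y^{-n}$ in the new basis. Direct substitution gives
\[
r = (t^n s)^m \, t^{-n} \, (t^n s)^m \, t^n,
\]
and the key cancellation $(t^n s)\, t^{-n}\, (t^n s) = t^n s^2$, obtained by splitting one factor of $(t^n s)^m$ off each side of $t^{-n}$, collapses this to
\[
(t^n s)^{m-1}\, t^n s^2\, (t^n s)^{m-1}\, t^n = (t^n s)^{m-1}\, t^n s^2 t^n\, (s t^n)^{m-1},
\]
using the elementary identity $(t^n s)^{m-1} t^n = t^n (s t^n)^{m-1}$. The right-hand side is manifestly a positive word in $\{s,t\}$, so one obtains
\[
G_{m,n} \cong \Gpres{s, t}{(t^n s)^{m-1}\, t^n s^2 t^n\, (s t^n)^{m-1} = 1},
\]
which is the required positive one-relator presentation.

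The main obstacle is guessing the right substitution; once it is in hand, the subsequent simplification is a short and forced cancellation. The $m=1$ case (where the relator collapses to $t^n s^2 t^n$, cyclically equivalent to $s^2 t^{2n}$) provides a useful sanity check, and the choice of substitution itself is guided by the Klein-bottle-like identity $(y^n x^m)^2 = y^{2n}$ visible in $G_{m,n}$.
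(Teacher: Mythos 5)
Your proof is correct and takes essentially the same approach as the paper: a change of free basis chosen to absorb the negative powers of $y$ into a new generator, after which the relator collapses to a positive word. The paper uses the substitution $y = a$, $x = ba^{n}$ to obtain the relator $(ba^{n})^{m}(a^{n}b)^{m}$, whereas you use $s = y^{n}x$, $t = y^{-1}$ to obtain $(t^{n}s)^{m-1}t^{n}s^{2}t^{n}(st^{n})^{m-1}$; both are valid positive one-relator presentations of $G_{m,n}$ and the computations check out.
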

\begin{proof}
Consider the relation $x^m y^n x^m y^{-n}$. We want to change it to an equivalent positive relation, so we introduce new generators $a,b$ with $y = a$ and $x = b a ^{n}$. Now we write the group relation in terms of $a$ and $b$ as:
\begin{align*}
(b a^{n})^m a^n (b a^{n})^m a^{-n} & = (b a^{n})^m a^n (b a^{n})^{m-1} (b a^{n}) a^{-n} \\
& = (b a^{n})^m a^n (b a^{n})^{m-1} b \\
 & = (b a^{n})^m (a^{n} b)^m.
\end{align*}
This way, we obtain another equivalent presentation for $G_{m,n}$ as:
\begin{equation}\label{Eq:Gmn-positive-presentation}
G_{m,n} \cong \Gpres{a,b}{(b a^n)^m(a^n b)^m = 1}.
\end{equation}
Thus $G_{m,n}$ is a positive one-relator group.
\end{proof}

We have proved the following, which is the main result of this section:

\begin{theorem}\label{thm_mainPositive} 
For all $m,n \geq 2$ the group 
\[
G_{m,n} \cong \Gpres{a,b}{(b a^n)^m(a^n b)^m = 1}
\]
is a positive one-relator group that contains a fixed finitely generated submonoid in which membership is undecidable. 
In particular, there are positive one-relator groups with undecidable submonoid membership problem.  
  \end{theorem}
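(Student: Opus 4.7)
The plan is to harvest the statement directly from the two immediately preceding lemmas, so the proof itself is short; what I will describe is how the ingredients fit together and where the real work has already been done.

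First, to obtain the positive presentation, I apply Lemma~\ref{lem_key}, which produces the isomorphism $G_{m,n} \cong \Gpres{a,b}{(ba^n)^m(a^n b)^m = 1}$ via the Tietze transformation $y = a$, $x = ba^n$. This presentation has a defining relator lying in the free monoid on $\{a,b\}$, so the isomorphism on the right-hand side exhibits $G_{m,n}$ as a positive one-relator group, verifying the first clause of the theorem.

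For the undecidability statement, I invoke Lemma~\ref{lem_Gmn}: for $m,n\geq 2$ there is a fixed finitely generated submonoid of $G_{m,n}$ in which membership is undecidable. Concretely, Lemma~\ref{A(P_4) inside G_{m,n}} supplies an explicit embedding $i\colon A(P_4) \hookrightarrow G_{m,n}$, and by Lohrey and Steinberg \cite[Theorem~2]{Lohrey2008} there is a fixed finitely generated submonoid $S \subseteq A(P_4)$ in which membership is undecidable. Pushing $S$ forward along $i$ yields a fixed finitely generated submonoid $i(S) \subseteq G_{m,n}$; since $i$ is injective, the decidability of membership in $i(S)$ within $G_{m,n}$ would, by pulling back along $i$, decide membership in $S$ within $A(P_4)$, which is the standard closure-under-substructures argument recorded in Section~\ref{Sec2-Preliminaries}. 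This contradicts Lohrey--Steinberg, so the submonoid membership problem is undecidable in $G_{m,n}$, and the ``In particular'' clause follows from the positive presentation.

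The only real obstacle in this strategy was already absorbed into Lemma~\ref{A(P_4) inside G_{m,n}}: finding the embedding $A(P_4) \hookrightarrow G_{m,n}$ when $m,n\geq 2$, which required routing through the intermediate \rabsag{} $\BS(S_{n,m})$ and a Reidemeister--Schreier computation identifying a subgroup of index $2n$ that splits appropriately. Once that embedding is secured, the theorem itself is purely a combination step, and no further calculation is needed.
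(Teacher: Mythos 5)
Your proposal is correct and follows essentially the same route as the paper: the paper also obtains the theorem by combining Lemma~\ref{lem_key} (the Tietze substitution $y=a$, $x=ba^n$ giving the positive presentation) with Lemma~\ref{lem_Gmn}, whose substance is the embedding $A(P_4)\hookrightarrow G_{m,n}$ of Lemma~\ref{A(P_4) inside G_{m,n}} together with the Lohrey--Steinberg result and closure of (un)decidability under passing to finitely generated substructures. No gaps; the combination step is exactly as you describe.
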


Note that if $M_{m,n}$ denotes the monoid with the same defining relation as in \eqref{Eq:Gmn-positive-presentation}, then $b$ is invertible in $M_{m,n}$, being both left and right invertible by virtue of the defining relation. By cyclically permuting the letters $b$ from the ends of the defining relation, we similarly also conclude that $a$ is invertible. Hence $M_{m,n}$ is a group, so necessarily $M_{m,n} = G_{m,n}$. Thus, we conclude: 

\begin{cor}\label{Cor:Exist-special-undecidable}
There exists a one-relation monoid with undecidable submonoid membership problem. Furthermore, there exists such a monoid with a presentation of the form $\Mpres{A}{w=1}$. Additionally, one can construct such monoids which contain a fixed finitely generated submonoid in which membership is undecidable.  
\end{cor}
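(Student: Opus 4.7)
The plan is to transfer the undecidability from the group $G_{m,n}$ of Theorem~\ref{thm_mainPositive} to the one-relation monoid $M_{m,n}$ with the very same presentation, by showing that the monoid is in fact a group, whence the two are isomorphic and the fixed undecidable finitely generated submonoid inside $G_{m,n}$ becomes a fixed undecidable finitely generated submonoid inside $M_{m,n}$. Since the defining relation in \eqref{Eq:Gmn-positive-presentation} is of the shape $w = 1$, the resulting monoid will simultaneously serve as the required example with a presentation of the form $\Mpres{A}{w=1}$.

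The only real content is therefore verifying that both generators $a$ and $b$ are units in $M_{m,n}$. For $b$ this is immediate from the shape of the relator $w = (ba^n)^m (a^n b)^m$: since $w$ begins with $b$, factoring $w = b \cdot U$ with $U = a^n(ba^n)^{m-1}(a^nb)^m$ and using $w = 1$ shows that $b$ has the right inverse $U$; since $w$ ends with $b$, factoring $w = V \cdot b$ with $V = (ba^n)^m(a^nb)^{m-1}a^n$ similarly shows $b$ has the left inverse $V$. Thus $b$ is a two-sided unit.

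For $a$, I would exploit the invertibility of $b$ to ``peel off'' the boundary occurrences of $b$ from $w = 1$. From $bU = 1$ and the fact that $b$ is a unit we deduce $U b = 1$; the word $Ub = a^n(ba^n)^{m-1}(a^nb)^m b$ now starts with $a$, and writing it as $a \cdot (a^{n-1}(ba^n)^{m-1}(a^nb)^m b) = 1$ produces a right inverse for $a$. A symmetric manipulation starting from $Vb = 1$ gives $bV = 1$, and since $V$ ends with $a$, a left inverse for $a$ is read off in the same way. Hence $a$ is a unit, so $M_{m,n}$ has both generators invertible and must coincide with its maximal group image $G_{m,n}$.

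With this identification in hand, Theorem~\ref{thm_mainPositive} immediately supplies all three conclusions of the corollary: $M_{m,n}$ is a one-relation monoid, it is presented by a single relator of the form $w=1$ (so the word problem here is for a \emph{special} one-relation monoid), and it contains a fixed finitely generated submonoid whose membership problem is undecidable. The main obstacle is genuinely just the bookkeeping in the cyclic-rotation step for $a$; nothing new is needed beyond the observation that once one endpoint letter of a relator $w = 1$ is known to be a unit, cyclic rotations of $w$ also equal $1$ in the monoid, and this is precisely what lets $a$ inherit invertibility from $b$.
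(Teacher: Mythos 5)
Your proposal is correct and follows essentially the same route as the paper: the paper also observes that $b$ is left and right invertible directly from the shape of the relator, then obtains the invertibility of $a$ by cyclically permuting the boundary occurrences of $b$, concludes $M_{m,n}=G_{m,n}$, and invokes Theorem~\ref{thm_mainPositive}. Your write-up merely makes explicit the cyclic-rotation step that the paper states in one sentence.
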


Using this, it is now not difficult to find the following classification of right-angled Artin subgroups of one-relation monoids:

\begin{theorem}\label{thm_RAAGsEmbedding} 
Let $\Gamma$ be a finite graph and let $A(\Gamma)$ be the right-angled Artin group that it defines. Then the following are equivalent. 
\begin{enumerate} 
\item[(i)] $\Gamma$ is a forest; 
\item[(ii)] $A(\Gamma)$ embeds into a one-relator group;  
\item[(iii)] $A(\Gamma)$ embeds into a positive one-relator group;  
\item[(iv)] $A(\Gamma)$ embeds into a one-relation monoid $\Mpres{A}{u=v}$;   
\item[(v)] $A(\Gamma)$ embeds into a one-relation monoid of the form $\Mpres{A}{w=1}$.   
  \end{enumerate} 
  \end{theorem}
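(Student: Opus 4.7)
My plan is to establish the equivalences via a cycle of implications (i)$\Rightarrow$(iii)$\Rightarrow$(v)$\Rightarrow$(iv)$\Rightarrow$(i), together with (iii)$\Rightarrow$(ii)$\Rightarrow$(i). Three of these are routine: (iii)$\Rightarrow$(ii) and (v)$\Rightarrow$(iv) hold by definition, and (iii)$\Rightarrow$(v) is immediate from the Perrin--Schupp theorem recalled in the introduction, since a positive one-relator group $\Gpres{A}{r=1}$ with $r$ a positive word is literally the same algebraic structure as the one-relation monoid $\Mpres{A}{r=1}$, so any embedding of $A(\Gamma)$ into the former is automatically an embedding into the latter.

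For the constructive direction (i)$\Rightarrow$(iii), I would assemble the embedding using the building blocks already obtained in Section~\ref{Sec:SMMP-in-positive-OR-groups}: namely $A(P_4) \hookrightarrow G_{m,n}$ for $m,n \geq 2$ (Lemma~\ref{A(P_4) inside G_{m,n}}) and $A(K_{1,n}) = \mathbb{Z} \times F_n \hookrightarrow G_{1,n}$ (Remark~\ref{G_{1,n} finite extension of a RAAG}). The plan is to first reduce to the single tree case, since a finite forest can be embedded as an induced subgraph of a larger tree (for instance by adjoining a new vertex attached to one vertex of each component). For a general tree $T$, one then realises $A(T)$ inside a suitable combination or iterated version of the $G_{m,n}$'s, exploiting that tree RAAGs can be built up from stars and paths, both of whose RAAGs are already known to embed into a positive one-relator group.

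The hard direction is (iv)$\Rightarrow$(i), which I would prove by contrapositive. The key intermediate step is to observe that any subgroup of a one-relation monoid $M$ must lie inside some group $\mathcal{H}$-class of $M$, and that in one-relation monoids such group $\mathcal{H}$-classes are in fact (subgroups of) one-relator groups — for $\Mpres{A}{w=1}$ this is Makanin's classical theorem applied to the group of units, and for general $\Mpres{A}{u=v}$ the non-unit group $\mathcal{H}$-classes must be handled via the local structure at idempotents. Granting this reduction, one is left to show that if $\Gamma$ is not a forest then $A(\Gamma)$ cannot embed in any one-relator group. Let $n$ be the length of a shortest induced cycle of $\Gamma$: if $n=3$ then $A(\Gamma) \supseteq \mathbb{Z}^3$, excluded by Lyndon's theorem that torsion-free one-relator groups have cohomological dimension at most $2$ (combined with the standard torsion treatment à la Fischer--Karrass--Solitar); if $n \geq 4$ then $A(\Gamma)$ contains $F_2 \times F_2$ (directly when $n=4$, and via the Kim--Koberda extension-graph machinery applied to the induced $C_n$ when $n \geq 5$), and $F_2 \times F_2$ is classically known not to embed in any one-relator group.

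The main obstacle will be the $\mathcal{H}$-class reduction in (iv)$\Rightarrow$(i): the group of units of $\Mpres{A}{u=v}$ with both $u,v$ non-empty is trivial, so the relevant subgroups sit at non-identity idempotents and must be handled with care, invoking either the local structure of one-relation monoids at idempotents or a direct reduction to the special case. A secondary obstacle is the non-embedding result for $F_2 \times F_2$ in one-relator groups, which, although classical, must be invoked rigorously for induced cycles of all lengths $\geq 4$.
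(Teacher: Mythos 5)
Your architecture is essentially the paper's: the easy implications are handled the same way, and your reduction of (iv) to the group case via group $\mathcal{H}$-classes is exactly the paper's step, which it settles in one line by citing Lallement's structure theory \cite{Lallement1974} for maximal subgroups of one-relation monoids rather than leaving the non-special case as an unresolved ``obstacle''. A minor caution on (iii)$\Rightarrow$(v): a positive one-relator group $\Gpres{A}{r=1}$ is \emph{not} in general ``literally the same algebraic structure'' as $\Mpres{A}{r=1}$ (compare $\Mpres{a,b}{ab=1}$, the bicyclic monoid, with $\Gpres{a,b}{ab=1}\cong\Z$); what Perrin--Schupp give is that the group admits \emph{some} special one-relation monoid presentation, which is what is actually needed and what the paper invokes.

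There are, however, two genuine gaps. First, (i)$\Rightarrow$(iii) is not actually proved: ``a suitable combination or iterated version of the $G_{m,n}$'s'' is not an argument, and it is unclear how to realise $A(T)$ for an arbitrary finite tree $T$ inside a single positive one-relator group by assembling stars and paths. The missing ingredient is Kim--Koberda's theorem \cite[Theorem~1.8]{Kim2013} that the RAAG on any finite forest embeds into $A(P_4)$; with it, (i)$\Rightarrow$(iii) follows at once from Lemma~\ref{A(P_4) inside G_{m,n}} and Lemma~\ref{lem_key}, which is the paper's proof, and your preliminary forest-to-tree reduction becomes unnecessary. Second, and more seriously, your treatment of induced cycles of length $n\ge 5$ is wrong: by Kambites' theorem on commuting elements in graph groups, $A(\Gamma)$ contains $F_2\times F_2$ if and only if $\Gamma$ contains an induced square, so $A(C_5)$ contains no copy of $F_2\times F_2$, and no extension-graph machinery applied to an induced $C_n$ with $n\ge 5$ can produce one. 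Your contrapositive therefore says nothing about, for example, $\Gamma=C_5$, and excluding long induced cycles is precisely the delicate part of (ii)$\Rightarrow$(i). The paper sidesteps this entirely by quoting the known equivalence of (i) and (ii) from \cite[Remark~2.3]{gray2020undecidability}; you would need to do the same, or supply a genuinely different obstruction to embedding $A(C_n)$, $n\ge 5$, into a one-relator group.
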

\begin{proof} 
The equivalence of (i) and (ii) follows from \cite[Remark 2.3]{gray2020undecidability}.

(i) $\Rightarrow$ (iii): 
By \cite[Theorem 1.8]{Kim2013} if $\Gamma$ is a finite forest then $A(\Gamma)$ embeds in $A(P_4)$ which in turn embeds in a positive one-relator group by Lemma~\ref{A(P_4) inside G_{m,n}} and Lemma~\ref{lem_key} above. 

(iii) $\Rightarrow$ (v): 
This follows from the result of Perrin and Schupp \cite{Perrin1984} showing that any positive one-relator group admits a one-relation monoid presentation. 

(v) $\Rightarrow$ (iv) is trivial. 

(iv) $\Rightarrow$ (ii): 
If $A(\Gamma)$ embeds into $\Mpres{A}{u=v}$ then it must embed in a maximal subgroup of $\Mpres{A}{u=v}$ which by the results of \cite{Lallement1974} must itself be a one-relator group. Hence $A(\Gamma)$ embeds in a one-relator group.   
  \end{proof}

\subsection{Positive one-relator groups and surface groups}\label{subsec_surface}

The class of positive one-relator groups has received some attention in the literature. They were first studied by Baumslag \cite{Baumslag1971} in 1971, who proved that the intersection of all terms in the lower central series in any positive one-relator group is trivial. While Perrin \& Schupp \cite{Perrin1984} observed that not all positive one-relator groups are residually finite, Wise \cite{Wise2001} later studied the residual finiteness of positive one-relator groups, proving that any positive one-relator group with torsion is residually finite (this result was later sharpened by Wise \cite{Wise2021} by dropping the positivity condition). In general, classifying which positive one-relator groups have decidable submonoid membership problem seems difficult. It is even unknown whether every one-relator group with torsion has decidable submonoid membership problem \cite[Questions~20.68 \& 20.69]{Kourovka2020}. One-relator groups with torsion are hyperbolic by the B.~B.~Newman Spelling Theorem; thus, a natural question in line with this was implicitly asked by Ivanov, Margolis \& Meakin \cite[p. 110]{Ivanov2001}: 

\begin{question}\label{Quest:Surface-group-smmp}
Is the submonoid/rational subset membership problem decidable in every surface group? 
\end{question}

Here, a \textit{surface group} $G$ (of genus $g \geq 0$) is one which is the fundamental group of some compact $2$-manifold; thus either $G \cong \mathcal{N}_g$ or $G \cong \mathcal{S}_g$, where
\[
\mathcal{N}_g = \Gpres{a_1, \dots, a_g}{a_1^2 a_2^2 \cdots a_g^2 = 1} \:\: \text{and} \:\: \mathcal{S}_g = \Gpres{a_1, b_1, \dots, a_g, b_g}{\prod_{i=1}^g [a_i, b_i] = 1}.
\]
We remark that the sub\textit{group} membership problem in all surface groups is, by contrast, well-known to be decidable \cite{Scott1978}, but in general hyperbolic groups can even have undecidable subgroup membership problem by using the Rips construction \cite{Rips1982}. Note further that $\mathcal{N}_2 \cong \Gpres{a,b}{a^2b^2 = 1}$ and $\mathcal{S}_1 \cong \Z^2$ are virtually abelian, and hence have decidable rational subset membership problem \cite{Grunschlag1999}. All other surface groups are hyperbolic one-relator groups. If Gromov's famous Surface Subgroup Conjecture holds for one-relator groups (see e.g. \cite{Gordon2010}), then any non-virtually free one-relator group contains a subgroup $\mathcal{S}_g$ for some $g \geq 2$. Hence, understanding membership problems for surface groups can be seen as a key first step to understanding membership problems in hyperbolic one-relator groups. 

It is well-known (see e.g. \cite[\S4.3.7]{Stillwell1993} or \cite{Hoare1971}) that the group $\mathcal{N}_3$ contains a finite index copy of every hyperbolic surface group. Consequently, Question~\ref{Quest:Surface-group-smmp} is equivalent to:

\begin{question}\label{Quest:SMMP-in-N3}
Is the submonoid/rational subset membership problem decidable in the positive one-relator group $\mathcal{N}_3 = \Gpres{a,b,c}{a^2b^2c^2=1}$?
\end{question}

The word $a^2b^2c^2$ has length $6$. It is easy to see that any one-relator group $G = \Gpres{A}{r=1}$ with $|r| < 6$ is either free or isomorphic to a free product of a free group by a two-generated one-relator group $H = \Gpres{a,b}{r=1}$ with $|r| < 6$. There are only seven (non-free) isomorphism types of such $H$, namely: $C_2, C_3, C_4, C_5, \mathcal{N}_2, \Z^2$, and the torus knot group $\Gpres{a,b}{a^2b^3=1}$. All such groups have decidable rational subset membership problem (see \cite{nyberg2022diophantine}), and hence, as decidability of the rational subset membership problem is preserved by taking free products, we conclude that any $G$ as above (with $|r| < 6$) has decidable rational subset membership problem. Thus, the group $\mathcal{N}_3$ is the smallest (in terms of relator word length) candidate for a one-relator group with undecidable rational subset membership problem.

\section{Prefix Membership Problems}\label{Sec:prefix-membership-problem}

We say that a one-relator group $G$ is \textit{quasi-positive} if it is given by a presentation of the form $G = \Gpres{A}{uv^{-1}=1}$ where $u, v \in A^+$ are positive words, and $uv^{-1}$ is a reduced word. 
As explained in the introduction, and summarized in the diagram of implications in Figure~\ref{fig_implications}, both results of Guba \cite{Guba1997} and work of Ivanov, Margolis and Meakin \cite{Ivanov2001} motivate the study of the prefix membership problem for quasi-positive one-relator groups.   
The main result of this section is:
\begin{theorem}\label{Thm:exists-quasi-positive-undec-prefix}
There exists a one-relator group 
$G = \Gpres{A}{uv^{-1}=1}$, 
where $u, v \in A^+$ and $uv^{-1}$ is a reduced word, 
with an undecidable prefix membership problem. 
\end{theorem}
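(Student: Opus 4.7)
The plan is to leverage the positive one-relator groups $G_{m,n}$ from Theorem~\ref{thm_mainPositive}, which have undecidable submonoid membership problem, and convert this undecidability into undecidability of the prefix membership problem in a suitably chosen quasi-positive one-relator presentation.

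The first observation is that $G_{m,n}$ itself already admits a quasi-positive reduced presentation, namely
\[
G_{m,n} = \Gpres{x,y}{\,x^{m}y^{n}x^{m}y^{-n}=1\,},
\]
where $u = x^{m}y^{n}x^{m}$ and $v = y^{n}$ are both positive and $uv^{-1}$ is freely reduced. So the formal shape required by the theorem is easy to achieve. The difficulty is that the \emph{specific} undecidable submonoid produced by Lemma~\ref{A(P_4) inside G_{m,n}} (the image of $A(P_4)$ under $A\mapsto yx^{m}y^{-1}$, $B\mapsto y^{2n}$, $C\mapsto x^{m}$, $D\mapsto xy^{2n}x^{-1}$) is not obviously contained in the prefix monoid of this particular relator, so a simple appeal to $G_{m,n}$ will not suffice.

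My plan is therefore to enlarge $G_{m,n}$ to a new one-relator group $H$ by adjoining finitely many auxiliary generators $c_{1},\ldots,c_{k}$ (one for each generator $h_{i}$ of the undecidable submonoid) together with a stable letter $t$, and to choose a single defining word of the form $R = U V^{-1}$ with $U,V\in (A\cup\{c_{i},t\})^{+}$ positive and $R$ freely reduced, arranged so that:
(a) $H$ contains an isomorphic copy of $G_{m,n}$ (and hence of its undecidable submonoid),
(b) each $c_{i}$ equals the corresponding $h_{i}$ as an element of $H$, and
(c) each $c_{i}$ is accessible as a prefix of $R$ (after applying the relation). A natural template is to take $R$ of the shape
\[
R \;\equiv\; c_{1}\,\alpha_{1}\,c_{2}\,\alpha_{2}\,\cdots\,c_{k}\,\alpha_{k}\,\beta\;\bigl(\gamma_{1}\cdots\gamma_{\ell}\bigr)^{-1},
\]
where the $\alpha_{i},\beta,\gamma_{j}$ are carefully chosen positive words in $x,y,t$ forcing the relations $c_{i}=h_{i}$ and, simultaneously, a conjugate of the original one-relator word of $G_{m,n}$. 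Standard Tietze/HNN-type bookkeeping will show that $H$ is isomorphic to an HNN extension (or multiple stable-letter extension) of $G_{m,n}$, hence contains it, and the displayed single relation captures all the auxiliary relations at once. Once the $c_{i}$ are in the prefix monoid $\Pre(R)$ of $H$, membership of an arbitrary element $g$ of $G_{m,n}$ in the undecidable submonoid $\Mgen{h_{1},\ldots,h_{k}}$ translates into membership of a corresponding tagged element of $H$ in $\Pre(R)$, yielding undecidability.

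The principal obstacle, and where most of the work lies, is steering the construction so that all four properties hold simultaneously: (i) a \emph{single} defining relator, (ii) of the quasi-positive form $UV^{-1}$, (iii) freely reduced, and (iv) with $\Pre(R)$ witnessing the undecidable membership problem of the original submonoid. Controlling (iii) while introducing the inverses required to cancel the stable letters is particularly delicate, and will likely force the introduction of additional letters that separate occurrences of the same generator, to prevent unwanted free cancellations at the $UV^{-1}$ boundary. Once a successful $R$ is fixed, the final verification is routine: a decision procedure for $\Pre(R)\subseteq H$ would yield one for $\Mgen{h_{1},\ldots,h_{k}}\subseteq G_{m,n}$, contradicting Lemma~\ref{lem_Gmn}.
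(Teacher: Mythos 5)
Your high-level strategy is the same as the paper's: start from a positive one-relator group with a fixed finitely generated submonoid $Q=\Mgen{h_1,\dots,h_k}$ in which membership is undecidable (Theorem~\ref{thm_mainPositive}), enlarge the presentation by a stable letter and auxiliary data, and produce a single freely reduced relator of the form $UV^{-1}$ with $U,V$ positive whose prefix monoid encodes membership in $Q$. However, there is a genuine gap at the step you call ``routine.'' Arranging that each $c_i\in\Pre(R)$ with $c_i=h_i$ in $H$ only gives the inclusion $\Mgen{h_1,\dots,h_k}\subseteq \Pre(R)$, which is the \emph{wrong} direction for a reduction: to conclude that a decision procedure for $\Pre(R)$ in $H$ decides membership in $Q$, you must exhibit a computable map $g\mapsto \phi(g)$ and prove the \emph{equivalence} $g\in Q \iff \phi(g)\in\Pre(R)$. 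The forward implication is easy once the $h_i$ lie in $\Pre(R)$; the backward implication requires showing that the prefix monoid --- which is generated by \emph{all} prefixes of $R$, including the partial products of your words $\alpha_i,\beta,\gamma_j$ and of the original relator --- is not so large that $\phi(g)$ falls into it for reasons unrelated to $g\in Q$. This is the entire technical content of the theorem, and your proposal does not address it; moreover, with the $c_i$ appearing as new generators in a single relator, the relations $c_i=h_i$ cannot all be \emph{imposed} by that one relator unless they are consequences of it, which your template does not arrange.

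The paper closes exactly this gap by routing through special inverse monoids. Construction~\ref{construction_encoding} builds a second positive relator from blocks $tz_i s\, t\overline{z_i} s$, where $\overline{z_i}$ encodes a positive word representing $h_i^{-1}$; in the group these blocks collapse to $(tw_it^{-1})(tw_i^{-1}t^{-1})=1$, so the second relator is redundant and the group is $G\ast\mathbb{Z}$, while the corresponding special inverse monoid lands precisely in the form required by \cite[Theorem~3.8]{gray2020undecidability}, whose main content is the equivalence ``$tgt^{-1}$ right invertible $\iff g\in Q$.'' Decidability of the prefix membership problem for the one-relator group $\Gpres{Y}{vuv^{-1}=1}$ then yields decidability of the word problem of that E-unitary inverse monoid via \cite[Theorem~3.3]{Ivanov2001}, and hence of membership in $Q$ (Theorems~\ref{thm_PosTwoRelatorGeneral}, \ref{thm_TwoRelator} and \ref{Thm:quasi-positive-prefix-reduces-to-membership-new-easy-proof}). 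To repair your argument you would need either to reproduce this inverse-monoid machinery or to supply an independent proof of the backward implication for your specific relator $R$; as written, the proposal is a plausible plan but not a proof.
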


Theorem~\ref{Thm:exists-quasi-positive-undec-prefix} 
will be proved by encoding the submonoid membership problem for a positive one-relator group into the prefix membership problem for a quasi-positive one-relator group. The encoding uses a general construction that will also be used to establish other undecidability results in this paper e.g. for inverse monoids, so we will explain it here so that it can be applied in each instance that it is needed.  

\begin{construction}\label{construction_encoding}
Let $G$ be a positive one-relator group and let $Q$ be a finitely generated submonoid of $G$.     
Let $\Mpres{A}{q=1} \cong \Gpres{A}{q=1}$ be a one-relation monoid presentation for the group, which exists by \cite{Perrin1984} since $G$ is a positive one-relator group, and let $a$ denote the first letter of the word $q$. Let $Q$ be a finitely generated submonoid of $G$.  Let $X = \{w_1, \ldots, w_k \} \subseteq A^+$ be a set of positive words such that $Q = \Mgen{w_1, \ldots, w_k} \leq G$.  Such a set of positive words $X$ exists since  every element of $G$ can be expressed by a positive word over $A$ as this is true in the monoid $M$. For any $w \in A^+$ let $\overline{w} \in A^+$ such that $\overline{w} = w^{-1}$ in $G$, i.e. $w \overline{w} = \overline{w} w = 1$ in $G$.     Let $z_i$ be word obtained from $w_i$ after replacing $a$ with $tx$ for every letter $a$.  Similarly, let $\overline{z_i}$ be the word obtained by replacing $a$ with $tx$ in $\overline{w_i}$ for every occurrence of the letter $a$.  Let $r$ be the word obtained by replacing every occurrence of $a$ with $tx$ in the word $q$. Let $B = A \setminus \{a\}$. Then $r$, $z_i$ and $\overline{z_i}$ for $1 \leq i \leq k$ are all positive words over $B \cup \{x,t\}$, and $r$ begins with $tx$ since the first letter of $q$ is $a$. 
Write $r \equiv ts$ , i.e. making $s$ the positive word obtained by deleting the first letter of $r$. In particular $s$ begins with the letter $x$.                       

Using the data above we define a two-relator group presentation 
\[
H_{G,X} = \Gpres{B,x,t}{
r = 1, \quad 
t z_1 s t\overline{z_1} s  \ldots  s t z_k s t\overline{z_k} s = 1
}.
\]
For future reference, we also use $M_{G,X}$ to denote the corresponding inverse monoid presentation  
\[
M_{G,X} = \Ipres{B,x,t}{
r = 1, \quad 
t z_1 s t\overline{z_1} s  \ldots  s t z_k s t\overline{z_k} s = 1
}.
\]
\end{construction}

To establish Theorem~\ref{Thm:exists-quasi-positive-undec-prefix} we will first prove a general result which shows how the word problem in any finitely generated submonoid of a positive one-relator group can be encoded in the prefix membership problem in a quasi-positive one-relator group, and then we combine that with the examples from Section~\ref{Sec:SMMP-in-positive-OR-groups} to obtain Theorem~\ref{Thm:exists-quasi-positive-undec-prefix}. 
To prove this general result we will make use of a related general result (Theorem~\ref{thm_TwoRelator}) about the prefix membership problem for positive two-relator groups that will be proved in Section~\ref{Sec:Positive-inverse-monoids} below as an application of results proved there about the word problem for two-relator inverse monoids. 

\begin{theorem}\label{Thm:quasi-positive-prefix-reduces-to-membership-new-easy-proof}
Let $G$ be a positive one-relator group, and let $Q$ be any finitely generated submonoid of $G$. 
Then there exists a quasi-positive one-relator group $G'$ such that 
the membership problem for $Q$ in $G$ reduces to 
the prefix membership problem for $G'$.
Furthermore, $G'$ can be chosen such that $G' \cong G \ast \mathbb{Z}$. 
\end{theorem}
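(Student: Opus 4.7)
The plan is to invoke Construction~\ref{construction_encoding} applied to $G$ together with a fixed finite positive generating set $X = \{w_1, \dots, w_k\}$ for $Q$; since $G$ admits a one-relation monoid presentation $\Mpres{A}{q=1}$ by \cite{Perrin1984}, every element of $G$ is representable by a positive word over $A$, so such an $X$ exists. The construction returns the positive two-relator group
\[
H_{G,X} = \Gpres{B \cup \{x,t\}}{r = 1,\ R_2 = 1}
\]
with $B = A \setminus\{a\}$, $r$ obtained from $q$ by the substitution $a \mapsto tx$, and $R_2 = tz_1 s\, t\overline{z_1}s \cdots tz_k s\, t\overline{z_k}s$. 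The first task is to verify $H_{G,X} \cong G \ast \mathbb{Z}$: the presentation $\Gpres{B \cup \{x,t\}}{r=1}$ is the Tietze rewriting of $\Gpres{A}{q=1} \ast \Ggen{t}$ obtained by splitting $a$ as $tx$, and the second relation is redundant because from $r = ts = 1$ one has $s = t^{-1}$, under which $R_2$ telescopes via the identities $z_i \overline{z_i}=1$ (true in $G$ since $w_i\overline{w_i}=1$) down to $tt^{-1} = 1$.

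The second step is to show that the prefix monoid $\Pre(H_{G,X})$ of this two-relator presentation encodes membership in $Q$. Reading $R_2$ up to the end of its $(2i-1)$-st block gives in the group
\[
(tw_1t^{-1})(tw_1^{-1}t^{-1}) \cdots (tw_{i-1}t^{-1})(tw_{i-1}^{-1}t^{-1})(tw_it^{-1}) = tw_it^{-1},
\]
so every conjugate $tw_it^{-1}$, and hence the whole submonoid $tQt^{-1}$, lies in $\Pre(H_{G,X})$. The delicate point is the converse: a combinatorial analysis of the remaining prefixes — those of $r$, and the intermediate positions within the blocks of $R_2$ — should rule out the existence of any $t$-conjugate $twt^{-1}$ with $w \in G \setminus Q$ inside the prefix monoid. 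Writing $w'$ for the word obtained from an arbitrary test word $w$ by the same substitution $a \mapsto tx$, this would give the computable reduction $w \in Q \text{ in } G \iff tw't^{-1} \in \Pre(H_{G,X})$.

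The third step is to pass from the positive two-relator presentation $H_{G,X}$ to a quasi-positive one-relator presentation $G'$ of the same group $G \ast \mathbb{Z}$, with prefix membership in $H_{G,X}$ reducing to prefix membership in $G'$. For this I would invoke Theorem~\ref{thm_TwoRelator}, proved later in Section~\ref{Sec:Positive-inverse-monoids} by the inverse-monoid methods developed there, which provides precisely such a translation from positive two-relator presentations to quasi-positive one-relator presentations while preserving the isomorphism type up to a free $\mathbb{Z}$-factor (absorbed into the free factor already present in $H_{G,X}$). Composing with the reduction of the previous step yields the desired reduction of the membership problem for $Q$ in $G$ to the prefix membership problem for the quasi-positive one-relator group $G' \cong G \ast \mathbb{Z}$.

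The main obstacle is the prefix-monoid analysis of step two: one must show that no product of prefixes of $r$ with partial prefixes lying inside the blocks of $R_2$ can conjure a stray $t$-conjugate $twt^{-1}$ of an element outside $Q$. The careful engineering of Construction~\ref{construction_encoding} — the $t$-marker at the start of every block, the $s$-markers which become $t^{-1}$ in the group, and the pairing of consecutive blocks into $(tz_is)(t\overline{z_i}s)$ so that complete pairs telescope to the identity — is exactly what makes this control tractable.
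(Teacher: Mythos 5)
Your overall architecture (apply Construction~\ref{construction_encoding}, reduce membership in $Q$ to prefix membership in the positive two-relator group $H_{G,X}$, then collapse to a single relator) matches the paper's, but two things go wrong. First, the step you yourself flag as ``the delicate point'' --- showing that prefix membership in $H_{G,X}$ actually decides membership in $Q$ --- is left entirely unproven: you sketch a direct combinatorial analysis of $\Pre(H_{G,X})$, aiming at the equivalence $w\in Q \iff tw't^{-1}\in\Pre(H_{G,X})$, but do not carry it out. The paper never does this combinatorially; it obtains the reduction as the statement of Theorem~\ref{thm_TwoRelator}, whose proof routes through the inverse monoid $M_{G,X}$, E-unitarity, Magnus' theorem for the maximal group image $G\ast\mathbb{Z}$, and \cite[Theorem~3.3]{Ivanov2001}, with no explicit description of the prefix monoid. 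You could repair this step simply by citing Theorem~\ref{thm_TwoRelator} for it, since that is exactly what it asserts.

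Second, and more seriously, you invoke Theorem~\ref{thm_TwoRelator} for your step three, attributing to it ``a translation from positive two-relator presentations to quasi-positive one-relator presentations.'' It contains no such translation: its conclusion is a reduction to the prefix membership problem of a positive \emph{two}-relator group, full stop. The passage from two relators to one --- the only genuinely new content in the proof of the present theorem --- is the following observation. Writing $u=r$ and $v$ for the second relator and $Y=B\cup\{x,t\}$, Theorem~\ref{thm_TwoRelator} also gives that the identity map on $Y$ induces an isomorphism $\Gpres{Y}{u=1,v=1}\rightarrow\Gpres{Y}{u=1}$, so $v=1$ is already a consequence of $u=1$; hence the identity map on $Y$ induces an isomorphism $\Gpres{Y}{u=1,v=1}\rightarrow\Gpres{Y}{vuv^{-1}=1}$, and since $v=1$ holds in this group both prefix monoids are generated by $\pref{u}\cup\pref{v}$ and correspond under the isomorphism. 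Because $u$ and $v$ are positive, the reduced form of $vuv^{-1}$ is a positive word times the inverse of a positive word, so $G'=\Gpres{Y}{vuv^{-1}=1}\cong G\ast\mathbb{Z}$ is the required quasi-positive one-relator group. This mechanism is absent from your proposal, so as written it never actually produces a quasi-positive one-relator presentation.
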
\begin{proof}
Let 
\[
H_{G,X} = \Gpres{B,x,t}{
r = 1, \quad 
t z_1 s t\overline{z_1} s  \ldots  s t z_k s t\overline{z_k} s = 1
}
\]
be the positive two-relator group given by Construction~\ref{construction_encoding}.   
Set $H = H_{G,X}$, $Y = B \cup \{x,t\}$, $u=r$ and $v = t z_1 s t\overline{z_1} s  \ldots  s t z_k s t\overline{z_k} s$.  
It then follows from Theorem~\ref{thm_TwoRelator} and its proof that 
the membership problem for $Q$ in $G$ reduces to the prefix membership problem for $H$,  
that the identity map on $Y$ induces an isomorphism 
$\Gpres{Y}{u=1, v=1} \rightarrow \Gpres{Y}{u=1}$,
and that $H \cong G \ast \mathbb{Z}$.    
It follows that the identity map on $Y$ induces an isomorphism  
\[
\Gpres{Y}{u=1,v=1} \rightarrow \Gpres{Y}{vuv^{-1}=1}, 
\] 
and this group is isomorphic to $G \ast \mathbb{Z}$.
Now since $v=1$ in this group, the prefix monoids of   
$\Gpres{Y}{u=1,v=1}$ and $\Gpres{Y}{vuv^{-1}=1}$ are both generated by $\pref{u} \cup \pref{v}$. 
Hence the isomorphism induced by the identity map on $Y$ maps the prefix monoid of   
$\Gpres{Y}{u=1,v=1}$ bijectively to the prefix monoid of $\Gpres{Y}{vuv^{-1}=1}$. 
Therefore if $\Gpres{Y}{vuv^{-1}=1}$ has decidable prefix membership problem then so does 
$\Gpres{Y}{u=1,v=1}$ which in turn 
by Theorem~\ref{thm_TwoRelator} 
implies that the membership problem for $Q$ in $G$ is decidable. 
The result then follows by taking $G'$ to be the one-relator group with generating set $Y$ and defining relator the reduced form of $vuv^{-1}$.    
\end{proof}

Applying this general result with our examples from earlier gives: 

\begin{proof}[Proof of Theorem~\ref{Thm:exists-quasi-positive-undec-prefix}] By Theorem~\ref{thm_mainPositive} there exists a positive one-relator group $G$ with a fixed finitely generated submonoid $Q$ such that the membership problem for $Q$ in $G$ is undecidable. Hence, by Theorem~\ref{Thm:quasi-positive-prefix-reduces-to-membership-new-easy-proof} there is a quasi-positive one-relator group $G'$ such that the membership problem for $Q$ in $G$ reduces to the prefix membership problem for $G'$. Hence $G'$ is a quasi-positive one-relator group with undecidable prefix membership problem. 
\end{proof}

\section{Membership Problems in One-relation Monoids}\label{Sec:SMMP-in-proper-one-relation-monoids}
The subgroup membership problem (also called the generalized word problem) is open in general for one-relator groups. The analogous question for monoids asks whether all one-relation monoids have decidable submonoid membership problem. As well as being a natural question, another reason for studying the membership problem in submonoids, and more generally rational subsets, of one-relation monoids comes from the \textit{left} (resp. \textit{right}) \textit{divisibility} problem, i.e. the problem of deciding membership in the principal right (resp. left) ideal generated by a given element.
By a classical result of Adian \& Oganesian \cite[Corollary~3]{Adian1978}, decidability of the divisibility problems in one-relation monoids $\Mpres{A}{u=v}$ implies decidability of the word problem. Clearly, these principal one-sided ideals are rational subsets of the monoid. Furthermore, Guba \cite{Guba1997} proved that for one-relation monoids of the form $\Mpres{a,b}{bUa=a}$, the decidability of the membership problem in principal right ideals is \textit{equivalent} to the word problem. This motivates the study of the membership problem for rational subsets of one-relation monoids. As outlined in the introduction above, several examples and families of one-relation monoid have been shown to have decidable rational subset membership problem, and in some sense most of them do in the way that for a randomly chosen one-relation monoid this problem will be decidable \cite{Kambites2009}, cf. \cite[p. 338]{NybergBrodda2021a} for a discussion.

Since every positive one-relator group admits a one-relation monoid presentation, the main result of \S\ref{Sec:SMMP-in-positive-OR-groups} (Theorem~\ref{thm_mainPositive}) gave as a corollary (Corollary~\ref{Cor:Exist-special-undecidable}) the first known examples of one-relation monoids for which the submonoid membership problem is undecidable. Specifically we have shown that the one-relation monoid  
\[
M_{m,n} = \Mpres{a,b}{(b a^n)^m(a^n b)^m = 1}
\]
with $m, n \geq 1$  has decidable submonoid membership problem (and rational subset membership problem) if and only if $m=1$ or $n=1$. These are the first known examples of one-relation monoids with undecidable submonoid membership problem (and undecidable rational subset membership problem). Of course all of these monoids are in fact groups. Thus arises the question: are there one-relation monoids that are not groups and have undecidable submonoid, or rational subset, membership? More generally we have the following problem:  

\smallskip

\noindent \textbf{Problem:}
Classify the one-relation monoids $\Mpres{A}{u=v}$ with decidable rational subset, or submonoid, membership problem.

\smallskip

Of course this problem may well be difficult to answer given the fact that the word problem for one-relation monoids remains open but, motivated by the connection with the reduction results by Adian \& Oganesian and Guba mentioned above, there is still strong motivation for developing a better understanding of the submonoid and rational subset membership problems in one-relation monoids.  

The general study of one-relation monoids $\Mpres{A}{u=v}$ typically splits into cases that, roughly speaking, give a measure of how far away the monoid is from being a group. In more detail, the study of one-relation monoids naturally divides into the investigation of so-called special, and more generally subspecial, monoids on the one hand, and those that are not subspecial, on the other. As we will explain in more detail below, associated with any one-relation subspecial monoid is a unique positive one-relator group that arises as a maximal subgroup of the monoid and the algorithmic properties of the monoid (e.g. the word problem) are typically controlled by properties of this positive one-relator group. 

All of the remaining one-relation monoids, i.e. those that are non-subspecial, are far away from being groups in the sense that the only idempotent such a monoid contains is its identity, and the group of units of the monoid is trivial. Recall from above that the word problem for one-relation monoids has been reduced to the problem of solving the word problem for one-relation monoids of the form $\Mpres{a,b}{bUa=aVa}$ and of the form $\Mpres{a,b}{bUa=a}$. All of the monoids in these two families are non-subspecial and it is natural to ask whether the rational subset membership (or submonoid) problems are decidable in the non-subspecial case, and in particular for monoids in these two classes. It is not difficult to show that the word problem in such monoids, which are left cancellative, reduces to the left divisibility problem, i.e. the problem of deciding membership in the principal right ideals $wA^\ast$.

Any principal ideal is clearly a rational subset so a better understanding of the membership problem in rational subsets of monoids of this form is important for the study of the word problem. We will see below how to construct monoids of both forms $\Mpres{a,b}{bUa=a}$ and $\Mpres{a,b}{bUa=aVa}$ with undecidable rational subset membership problem. Constructing such examples is more difficult than in the subspecial case since the monoids do not embed any subgroups (apart from the trivial group) so the usual approach of embedding the RAAG $A(P_4)$ is not available to us in those cases. This will be discussed in more detail below. 
\subsection{Special and subspecial monoids}

As we have already seen above, the one-relation monoids that are groups all admit presentation of the form $\Mpres{A}{w=1}$. These are usually called \emph{special one-relation monoids} in the literature. It follows from results of Adian \cite{Adian1966} that the group of units of any special one-relation monoid is a positive one-relator group, and there is an algorithm (called Adian's \textit{overlap algorithm}) that computes a presentation for the group of units of the monoid. In more detail, the algorithm computes a factorization $w \equiv u_1 \ldots u_k$ into non-empty words $u_i$ that all represent invertible elements of the monoid, and no proper non-empty prefix of $u_i$ represents an invertible element. The set of factors $\{u_i : 1 \leq i \leq k\}$ is \textit{overlap-free}, in the sense that no non-empty prefix (resp. suffix) of a word in this set is equal to a non-empty suffix (resp. prefix) of another word in this set.  This is called the decomposition of the relator into minimal invertible pieces, and the group of units of the monoid is then isomorphic to $\Mpres{B}{b_{u_1} \ldots b_{u_k}}$ where $B = \{ b_{u_i} : 1 \leq i \leq k \}$.  So the group of units $\Mpres{B}{b_{u_1} \ldots b_{u_k}}$ of $M=\Mpres{A}{w=1}$ is a positive one-relator group, and if that group has undecidable submonoid, or rational subset, membership problem then so does $M$.  This gives the most straightforward way of using our results to construct non-group one-relation monoids with undecidable submonoid membership problem.   
\begin{prop}\label{prop_specialmonoidexamples} 
Let $m, n \geq 2$ and let $A$ be a finite alphabet and let $\alpha, \beta \in A^+$ such that $\{\alpha,\beta\}$ is overlap-free.  
Then the group of units of 
\[
N = \Mpres{A}{(\beta \alpha^n)^m(\alpha^n \beta)^m = 1}
\]
is isomorphic to 
$M_{m,n}$
and hence 
$N$  contains a fixed finitely generated submonoid in which membership is undecidable. The monoid $N$ is a group if and only if $|\alpha| = |\beta| = 1$.  
\end{prop}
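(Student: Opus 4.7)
The plan is to identify the group of units of $N$ via Adian's overlap algorithm and then transfer undecidability from Theorem~\ref{thm_mainPositive}. First, I would apply Adian's overlap algorithm to the relator $w \equiv (\beta \alpha^n)^m (\alpha^n \beta)^m$, viewing $w$ as the concatenation obtained by writing each $\alpha^n$ as $n$ individual copies of $\alpha$. The distinct factors in this tentative factorization are exactly $\alpha$ and $\beta$. The overlap-free hypothesis on $\{\alpha,\beta\}$ is precisely what is needed to verify that no proper non-empty prefix of $\alpha$ or of $\beta$ is identified with $1$ in $N$: any such identification would produce, through Adian's procedure, a prefix--suffix overlap between two of the factors, contradicting the hypothesis. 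Invoking Adian's theorem that the group of units of a special one-relation monoid is presented by generators in bijection with the minimal invertible pieces subject to the single defining relation obtained by substitution, I conclude that the group of units $H$ of $N$ has presentation
\[
H \cong \Gpres{b_\alpha, b_\beta}{(b_\beta\, b_\alpha^n)^m (b_\alpha^n\, b_\beta)^m = 1},
\]
which by the presentation of $G_{m,n}$ recorded in \eqref{Eq:Gmn-positive-presentation} is isomorphic to $M_{m,n}$ via $b_\alpha \leftrightarrow a$, $b_\beta \leftrightarrow b$.

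Next, to transfer undecidability, I would use Theorem~\ref{thm_mainPositive}, which provides a fixed finitely generated submonoid $Q \leq M_{m,n}$ in which membership is undecidable. Under the inclusion $H \hookrightarrow N$ of the group of units, $Q$ embeds as a finitely generated submonoid of $N$. Any algorithm deciding membership in $Q$ inside $N$ would also decide membership in $Q$ inside $H$: given a word $v$ over the alphabet $\{b_\alpha, b_\beta\}$, form $\tilde v \in A^\ast$ by substituting $\alpha$ for each $b_\alpha$ and $\beta$ for each $b_\beta$; since $H \hookrightarrow N$ is injective, $v$ represents an element of $Q$ in $H$ if and only if $\tilde v$ represents an element of $Q$ in $N$. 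Hence $N$ inherits a fixed finitely generated submonoid in which membership is undecidable.

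For the final claim, observe that any single letter $c \in A$ that is invertible in $N$ must itself be a minimal invertible piece, since $c$ admits no proper non-empty prefix. Thus $N$ is a group precisely when every letter of $A$ lies in $\{\alpha,\beta\}$, which in turn is equivalent to $|\alpha|=|\beta|=1$ (with $A$ consisting exactly of the letters used in $w$); in this case $N = H \cong M_{m,n}$.

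The main obstacle lies in the first step: rigorously verifying that the overlap-free assumption on $\{\alpha,\beta\}$ is exactly enough to pin down $\{\alpha,\beta\}$ as the complete set of minimal invertible pieces of Adian's algorithm applied to $w$. This will require a careful tracing of the algorithm's iteration, checking that any candidate invertible prefix shorter than $\alpha$ or $\beta$ would force a proper prefix--suffix identification between the factors, contradicting overlap-freeness. Once this verification is complete, the transfer of undecidability and the characterization of when $N$ is a group are routine consequences of the identification of $H$ with $M_{m,n}$.
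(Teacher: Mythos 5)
Your proposal is correct and follows essentially the same route as the paper: identify the minimal invertible pieces as $\alpha$ and $\beta$ via Adian's overlap algorithm (using overlap-freeness), conclude that the group of units is $M_{m,n}$, transfer the undecidable submonoid from Theorem~\ref{thm_mainPositive}, and note that a proper prefix of $\alpha$ or $\beta$ cannot be invertible, so $N$ is a group only when $|\alpha|=|\beta|=1$. The paper's own proof is just a terser version of this, and your parenthetical caveat that $A$ should consist exactly of the letters occurring in the relator is implicitly assumed there as well.
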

\begin{proof} 
The fact that the group of units of $N$ is $M_{m,n}$ follows by using the fact that $\{\alpha,\beta\}$ are overlap-free and applying Adian's overlap algorithm \cite{Adian1966}. If $|\alpha| \geq 2$ then the first letter of $\alpha$ is right invertible but not left invertible in $N$ from which it follows that $N$ is not a group; similarly if $|\beta| \geq 2$.      
\end{proof}
For example taking $\alpha = xy$ and $\beta = xxyy$ in 
Proposition~\ref{prop_specialmonoidexamples}
gives that the following monoid, which is not a group since $x$ is not invertible, has undecidable submonoid membership problem  
\[
\Mpres{x, y}{(xxyy (xy)^n)^m((xy)^n xxyy)^m = 1}
\]
for $m,n \geq 2$.

More generally, a one-relation monoid $\Mpres{A}{u=v}$ with $|v| \leq |u|$ is called \textit{subspecial} if $u \in vA^* \cap A^*v$. The following proposition follows from known results, and explains the close connection between these one-relation monoids and the class of positive one-relator groups. 

\begin{prop}\label{prop_subspecial}  
Let $M$ be a subspecial one-relation monoid, i.e. let $M=\Mpres{A}{u=v}$ where $|v| \leq |u|$ and $u \in vA^* \cap A^*v$. 
\begin{enumerate} 
\item[(i)] If $v=1$ is the empty word then $M$ is a special monoid with group of units $H$ isomorphic to a positive one-relator group. 
\item[(ii)] If $v \neq 1$ then the group of units is trivial, the monoid contains non-trivial idempotents, and there is a fixed positive one-relator group $H$ such that the maximal subgroup (i.e. group $\mathscr{H}$-class) associated to any non-trivial idempotent is isomorphic to $H$. 
  \end{enumerate}
In both cases, there is an algorithm that computes a presentation and a generating set for the positive one-relator group $H$ from the given presentation of $M$. In particular, if $M$ has decidable submonoid (resp. rational subset) membership problem then so does the positive one-relator group $H$.   
\end{prop}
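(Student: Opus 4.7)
The plan is to combine Adian's overlap algorithm for special monoids with the known structure theory for subspecial one-relation monoids. Part (i) is essentially a restatement of Adian's theorem recalled in the paragraph preceding Proposition~\ref{prop_specialmonoidexamples}: when $v=1$, the presentation $\Mpres{A}{u=1}$ is special, the overlap algorithm factorises $u \equiv u_1 \cdots u_k$ into overlap-free minimal invertible pieces, and the group of units is $H \cong \Mpres{B}{b_{u_1} \cdots b_{u_k}=1}$ with $B = \{b_{u_i}\}$. Being a special one-relation monoid whose defining relator is a positive word, $H$ is a positive one-relator group, and the words $u_1, \ldots, u_k \in A^+$ furnish an explicit generating set for $H$ as a submonoid of $M$.

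For (ii), write $u \equiv v\alpha \equiv \beta v$ with $\alpha, \beta, v \in A^+$. Triviality of the group of units follows from the observation that the congruence generated by $u=v$ relates only non-empty words; hence the empty word lies in a singleton class of $\sigma$, and $xy = 1$ in $M$ forces $x = y = 1$. Existence of a non-trivial idempotent is less direct, but follows from the absorption identities $\beta^k v = v = v \alpha^k$ (for all $k \geq 0$) implied by the single defining relation: a standard combinatorial argument for subspecial presentations (as developed in Adian--Oganesian \cite{Adian1987} and surveyed in \cite{NybergBrodda2021a}) produces an idempotent $e$ of the form $\beta^k$, $\alpha^k$, or a suitable product of such, for a computable $k$.

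To identify the maximal subgroup $\mathscr{H}_e$, I would invoke the standard compression of a subspecial one-relation presentation into a special one-relation presentation: this produces, effectively from $(A,u,v)$, an auxiliary monoid $N = \Mpres{A'}{w=1}$ whose group of units is isomorphic to $\mathscr{H}_e$ via an explicit dictionary sending the generators of $A'$ to words in $A^+$. Applying part (i) to $N$ then identifies $\mathscr{H}_e$ as a positive one-relator group $H$. Uniqueness of $H$ up to isomorphism across \emph{all} non-trivial idempotents follows by combining Lallement's theorem \cite{Lallement1974}---already invoked in the proof of Theorem~\ref{thm_RAAGsEmbedding}---with the fact that group $\mathscr{H}$-classes lying in a common $\mathscr{D}$-class are isomorphic, once one checks in the subspecial setting that all non-trivial idempotents of $M$ are $\mathscr{D}$-equivalent (which comes from translating via powers of $\alpha$ and $\beta$).

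The algorithm computing $H$ is then the composition of the effective compression step with Adian's overlap algorithm applied to $N$. The final transfer-of-decidability statement follows from the substructure closure principle recalled in Section~\ref{Sec2-Preliminaries}: the algorithm outputs an explicit finite generating set for $H$ as a submonoid of $M$, so decidability of the submonoid (resp.\ rational subset) membership problem in $M$ descends to $H$. The principal obstacle I anticipate is making the compression step in (ii) fully rigorous---specifically, identifying the alphabet $A'$ and relator $w$ and verifying that $\mathscr{H}_e$ is genuinely isomorphic (rather than merely equivalent in word problem) to the group of units of $\Mpres{A'}{w=1}$, with an effective translation between generators. This is precisely where the existing structure theory of subspecial one-relation monoids supplies the required machinery; once it is in place, the remaining steps are routine applications of Adian's algorithm and of the closure of the membership problems under finitely generated submonoids.
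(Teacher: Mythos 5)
The paper gives no proof of this proposition at all: it states only that it ``follows from known results,'' and the surrounding text (the paragraph on Adian's overlap algorithm before Proposition~\ref{prop_specialmonoidexamples}, and the use of compression in the sense of \cite{Lallement1974, Kobayashi2000} in Example~\ref{ex_compressSpecial}) indicates exactly which results are meant. Your proposal correctly assembles those same ingredients---Adian's overlap algorithm for (i), triviality of the unit group and Lallement's idempotent/compression theory for (ii), and closure of the membership problems under finitely generated substructures for the final transfer---so it is the intended argument.
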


There is evidence in the literature that the converse of this proposition should be true in the case of rational subset membership, that is, in both cases of Proposition~\ref{prop_subspecial} we expect that the monoid $M$ will have decidable rational subset membership problem if and only if the positive one-relator group $H$ does. For instance if the group $H$ is a virtually free group (and hence has decidable rational subset membership problem) then it was proved in \cite{NybergBrodda2022a, nybergbroddacompression} that $M$ has decidable rational subset membership problem. In this sense we expect the problem of classifying subspecial monoids with decidable rational subset membership problem to be equivalent to the problem of classifying the positive one-relator groups with this property. 

Given that we now have examples of positive one-relator groups with undecidable submonoid membership problem, Proposition~\ref{prop_subspecial} gives a recipe for constructing many more subspecial examples, by realizing our examples as the group $H$ in the proposition.  Rather than developing that theory in full here, we content ourselves by giving an example from which it should be clear that many other examples could be constructed using the same approach.

\begin{example}\label{ex_compressSpecial}
For $m,n \geq 1$ define $T_{m,n}$ to be the monoid  
\[
\Mpres{z,t}{((ztz^2t)^2(ztz^3t)^2 (ztz^2tztz^3t)^n)^m((ztz^2tztz^3t)^n (ztz^2t)^2(ztz^3t)^2 )^m = zt}. 
\]
If we compress this monoid (in the sense of \cite{Lallement1974, Kobayashi2000}) with respect to $zt$ we obtain the monoid  
\[
\Mpres{x, y}{(xxyy (xy)^n)^m((xy)^n xxyy)^m = 1}, 
\]
and from above the group of units of this monoid is isomorphic to 
\[
M_{m,n} = \Mpres{a,b}{(b a^n)^m(a^n b)^m = 1}. 
\]
It then follows from \cite{Lallement1974} that $T_{m,n}$ contains non-identity idempotents and the maximal subgroup of any of these idempotents is isomorphic to the positive one-relator group $M_{m,n}$. It follows that for all $m, n \geq 2$ 
the monoid $T_{m,n}$ embeds the group $A(P_4)$ and hence contains a fixed finite generated submonoid in which membership is undecidable.   
\end{example}

\subsection{Non-subspecial monoids} 

As explained in the beginning of this section, all of the remaining one-relation monoids, i.e. those that are non-subspecial, are far away from being groups, containing no non-trivial subgroups. These non-subspecial monoids will be the topic of the remainder of this section, and also the next section where we develop new methods for constructing examples of these forms with undecidable rational subset membership problem. We now turn our attention to the class of \textit{monadic} one-relation monoids $\Mpres{a,b}{bUa=a}$ discussed in the introduction to this paper. As explained there, one major motivation for the work done in this paper is the work of Guba \cite{Guba1997} which reduces the word problem in these monoids to the membership problem in certain submonoids of particular positive one-relator groups. In the next section we will give an infinite family of monadic one-relation monoids $\Mpres{a,b}{bUa=a}$, each of which has undecidable rational subset membership problem. 

In his paper \cite{Guba1997}, Guba associates with any $\Mpres{a,b}{bUa=a}$ a positive one-relator group $G$ such that the word problem in $\Mpres{a,b}{bUa=a}$ reduces to solving the membership problem within a certain submonoid of $G$.
Of course if $G$ has decidable submonoid membership problem then $\Mpres{a,b}{bUa=a}$ will have decidable word problem, so the only cases of interest now are those where the positive one relator group $G$ does \emph{not} have decidable submonoid membership problem. We now know from the results above that such positive one-relator groups $G$ do exist, so the next step is to seek monoids $\Mpres{a,b}{bUa=a}$ such that the associated positive one-relator groups arising from Guba's theory are isomorphic to the positive groups $G_{m,n}$ that we defined and investigated above. We will now identify one such class of monadic one-relation monoids, and give some of their basic properties, before going on to study rational subset membership in this class in depth in the next section (\S\ref{Sec:Guba-type-monoids}).

Let $G_{m,n} = \Gpres{x,y}{x^my^nx^my^{-n}}$. By Remark \ref{A(P_4) inside G_{m,n}} the subgroup 
\[
H = \Ggen{yx^my^{-1}, y^{2n}, x^m,xy^{2n}x^{-1}}
\]
is isomorphic to $A(P_4)$ in $G_{m,n}$.

The substitution $y \mapsto a, x \mapsto ba^n$ defines an isomorphism between $G_{m,n}$ and the group:
\[
M_{m,n} = \Mpres{a, b}{(b a^n)^m(a^n b)^m = 1}.
\]
We know from the results in \S\ref{Sec:SMMP-in-positive-OR-groups} that $M_{m,n}$ is a one-relator group defined by a positive word and with an undecidable submonoid membership problem. 

In the new presentation, the subgroup $H$ is given as: 
\[
H = \Ggen{a(ba^n)^ma^{-1}, a^{2n}, (ba^n)^m, ba^{2n}b^{-1}}.
\]

Let $W_{m,n} \equiv (b a^n)^m(a^n b)^m$, and let $Q_{m,n}$ be the longest proper suffix of $W_{m,n}$, i.e. $W_{m,n} \equiv b Q_{m,n}$. Consider the monoid 
\[
R_{m,n} = \text{Mon}\langle \, a,\, b \mid a = bQ_{m,n}a \,\rangle.
\]
Applying Oganesian's algorithm (see \cite{Guba1997}) to the monoid 
$R_{m,n}$, we see that the suffix monoid of $R_{m,n}$ embeds in the one-relator group 
\[
G_{m,n} = \text{Gp}\langle \, a,\, b \mid (b a^n)^m(a^n b)^m = 1\,\rangle = M_{m,n}.
\]
Indeed, the suffixes of $Q_{m,n}a$ that start with $a$ are left-divisible by $a$ (which is the first suffix), while the suffixes of $Q_{m,n}a$ that start with $b$, start with $ba$ as well; so, these suffixes are left-divisible by $ba$ (which is the second suffix). Moreover, as $a = bQ_{m,n}a \equiv (ba^n)^m(a^nb)^m$, $a$ is left-divisible by $ba$. By Oganesian's algorithm, this suffices to show that the suffix monoid of $R_{m,n}$ embeds in $G_{m,n}$. 

In the next section, we will prove the undecidability of the rational subset membership problem in the monoid $R_{m,n}$. Before doing so, we will solve the word problem in $R_{m,n}$. We will do so by means of a finite complete rewriting system. First, note that the word $bQ_{m,n}a$ has self-overlaps; in fact, all the words of the form~$(ba^n)^i ba$ (for~$0 \leq i \leq m-1$) are both a prefix and a suffix. Thus while the rewriting system $bQ_{m,n}a \rightarrow a$ defines $R_{m,n}$, it is not complete. It can, however, be completed to one:

\begin{lemma}\label{lem: FCRS}
The monoid $R_{m,n}$ admits a finite complete rewriting system $S$ on the alphabet $\{a, b\}$, and with the following rules:
\begin{itemize}[topsep=0pt]
\item[(i)] $(ba^n)^m (a^nb)^{m} a \longrightarrow a$,
\item[(ii)] $(ba^n)^m (a^nb)^{m-i} a^n a \longrightarrow (a^nb)^{m-i} a^n (a^n b)^{m} a \: (1 \leq i \leq m)$.
\end{itemize}
\end{lemma}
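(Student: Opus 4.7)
My plan is to verify that the system $S$ is sound (each rule is an identity in $R_{m,n}$), terminating, and locally confluent, after which completeness follows from Newman's Lemma. For soundness, rule (i) is exactly the defining relation $W_{m,n} a = a$, where $W_{m,n} := (ba^n)^m(a^nb)^m = bQ_{m,n}$. Each rule in (ii) is derivable in the monoid: starting from $\ell_i := L(a^nb)^{m-i}a^{n+1}$ with $L := (ba^n)^m$ and $R := (a^nb)^m$, one rewrites the trailing $a$ as $W_{m,n}a$ via rule (i) applied in reverse, then uses the elementary string identity $a^n(ba^n)^k = (a^nb)^k a^n$ to commute $a^n$ past the newly introduced $L$, producing a new occurrence of $LRa$ which a single forward application of rule (i) collapses to yield the stated right-hand side $(a^nb)^{m-i}a^n(a^nb)^m a$.

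For termination I would use the shortlex order on $\{a,b\}^\ast$ with $a < b$. Rule (i) is strictly length-decreasing (from length $2m(n+1)+1$ to $1$). Each rule in (ii) preserves length but replaces a word beginning with $b$ (the first letter of $L$) by one beginning with $a$ (the first letter of $(a^nb)^{m-i}a^n$), and so strictly decreases the shortlex order.

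The bulk of the work is the critical-pair analysis. Every LHS begins with $L$, whose characteristic pattern is a block of $m$ $b$-letters equispaced at distance $N := n+1$. A direct inspection of the $b$-positions in each rule LHS shows that this periodic spacing is broken immediately after the initial $L$ by an $a^{2n}$-run, so no rule LHS contains a second occurrence of $L$; this rules out all internal overlaps. The LHSs of rules in (ii) end in the long $a$-run $a^{n+1}$, which cannot match the opening $b$ of any LHS, so only $\ell_0 := LRa$, which ends in $ba$, contributes external overlaps on its right. A failure-function computation then identifies the borders of $\ell_0$ as precisely the words $(ba^n)^j ba$ of length $jN+2$ for $j = 0, 1, \ldots, m-1$, giving $m$ overlap lengths with, for each, one critical pair per LHS placed on the right.

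Each of these critical pairs resolves by the same device. Applying rule (i) to the left occurrence of $\ell_0$ produces $a\cdot \ell[jN+2..]$, which by the commutation identity simplifies to a word of the form $(a^nb)^{m-j-1}a^n\cdot(\mathrm{tail})$. Applying the rule associated to the right LHS produces a word still beginning with $L$; one then checks that $\ell_{j+1}$ is a prefix of this word (because the trailing $a^{n+1}$ of $\ell_{j+1}$ is formed by the $a^n$ immediately after $L(a^nb)^{m-j-1}$ together with the first $a$ of the subsequent block), and applying rule $(ii)_{j+1}$ followed by at most one application of rule (i) collapses it to the same simplified form. The anticipated main obstacle is the bookkeeping for the $O(m^2)$ critical pairs, but the single identity $a^n(ba^n)^k = (a^nb)^k a^n$ together with the chained reductions above gives a uniform mechanism, so each individual case reduces to a routine calculation.
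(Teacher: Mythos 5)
Your proposal is correct and follows essentially the same route as the paper: Newman's Lemma with shortlex termination, the observation that the only nontrivial overlaps are the suffixes $(ba^n)^jba$ of the rule-(i) left-hand side against the common prefix $(ba^n)^m$ of every left-hand side, and resolution of each critical pair by noting that one branch is already irreducible while the other reduces to it via rule $(ii)_{j+1}$ followed by at most one application of rule (i). You additionally verify explicitly that the rules of type (ii) are consequences of the defining relation, a soundness check the paper leaves implicit; your only blemishes are small (e.g.\ the claim that no left-hand side contains a second occurrence of $L$ fails for $n=1$, though the conclusion about overlaps still holds), and they are no terser than the paper's own treatment.
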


We will denote rule (i) as $\alpha_0 \to \beta_0$, and (ii) as $\alpha_i \to \beta_i$ ($1 \leq i \leq m$), respectively. To show that the system $S$ in Lemma~\ref{lem: FCRS} is complete, it is enough to show that it is Noetherian and locally confluent by Newman's Lemma (see e.g. \cite[Lemma 12.15]{holt2005handbook}). Obviously, it is Noetherian, as can be seen by using the shortlex order. We must therefore only show that our system is locally confluent. For this, we use the following lemma:

\begin{lemma}[Lemma 12.17 in \cite{holt2005handbook}]\label{lem: locally confluent}
The system $S$ is locally confluent if and only if for all pairs of rules $(l_1, r_1), (l_2, r_2) \in S$, the following
conditions are satisfied:
\begin{itemize}[topsep=0pt, partopsep=0pt]
\item[(i)] If $l_1 = us$ and $l_2 = sv$ with $u, s, v \in \{a,b\}^*$ and $s \neq \varepsilon$, then there exists $w \in \{a,b\}^*$ with $r_1 v \rightarrow ^* w$, and $u r_2 \rightarrow ^* w$.
\item[(ii)] If $l_1 = usv$ and $l_2 = s$ with $u, s, v \in \{a,b\}^*$ and $s \neq \varepsilon$, then there exists $w \in \{a,b\}^*$ with $r_1 \rightarrow^* w$, and $ur_2v \rightarrow^* w$.
\end{itemize}
\end{lemma}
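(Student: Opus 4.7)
The plan is to prove both directions of the biconditional, with the forward direction being essentially immediate and the backward direction requiring a case analysis on how two redexes in a word can be positioned relative to one another.

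For the forward direction, I would argue that if $S$ is locally confluent then conditions (i) and (ii) are forced. In case (i), the word $usv$ is a common ancestor of $r_1 v$ and $u r_2$, since applying $(l_1, r_1)$ at position $0$ rewrites $usv = l_1 v$ to $r_1 v$, while applying $(l_2, r_2)$ at position $|u|$ rewrites $usv = u l_2$ to $u r_2$. Local confluence then supplies the required common descendant $w$. In case (ii), the word $l_1 = u s v = u l_2 v$ is similarly a common ancestor of $r_1$ (via the first rule, applied at position $0$) and $u r_2 v$ (via the second rule, applied at position $|u|$), and local confluence again provides $w$.

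The main content is the backward direction. Assume (i) and (ii) hold, and consider an arbitrary peak $w_1 \leftarrow w \rightarrow w_2$, where $w = x_1 l_1 y_1$ with $w_1 = x_1 r_1 y_1$, and $w = x_2 l_2 y_2$ with $w_2 = x_2 r_2 y_2$. Assume without loss of generality that $|x_1| \leq |x_2|$. I would split into three cases according to how the two occurrences of redexes sit inside $w$:
\begin{itemize}
\item[(a)] \emph{Disjoint redexes:} $|x_1| + |l_1| \leq |x_2|$. Then there is a (possibly empty) middle word $z$ with $w = x_1 l_1 z l_2 y_2$, and we can apply the other rule in each branch to reach the common descendant $x_1 r_1 z r_2 y_2$.
\item[(b)] \emph{Nested redexes:} the occurrence of $l_2$ lies entirely inside the occurrence of $l_1$. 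Then $l_1 = u l_2 v$ for some $u, v$, so hypothesis (ii) applied with $s = l_2$ gives $w' \in \{a,b\}^*$ with $r_1 \to^* w'$ and $u r_2 v \to^* w'$. Sandwiching between $x_1$ and $y_1$ yields the desired common descendant $x_1 w' y_1$ of $w_1$ and $w_2$.
\item[(c)] \emph{Properly overlapping redexes:} the two occurrences overlap but neither contains the other. Then $l_1 = u s$ and $l_2 = s v$ for some nonempty $s$, and by hypothesis (i) there is $w'$ with $r_1 v \to^* w'$ and $u r_2 \to^* w'$; sandwiching again produces a common descendant of $w_1$ and $w_2$.
\end{itemize}

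The principal obstacle is purely bookkeeping: verifying that the three cases above exhaust all possibilities for the relative positions of the two redexes and that the common descendants produced by (i) and (ii) really do lift, by prepending $x_1$ and appending the appropriate suffix, to common descendants of $w_1$ and $w_2$ in the original peak. Once the case distinction is made precisely in terms of the inequalities $|x_1| + |l_1| \leq |x_2|$, $|x_2| + |l_2| \leq |x_1| + |l_1|$, and their complements, each case reduces immediately to one of the two hypotheses or to the trivial disjoint case, completing the proof.
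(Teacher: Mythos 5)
Your proof is correct, but note that the paper does not actually prove this statement at all: it is imported verbatim as Lemma 12.17 of the cited Handbook of Holt, Eick and O'Brien, and is used as a black box to verify local confluence of the rewriting system in Lemma \ref{lem: FCRS}. What you have written is the standard proof of the critical-pair criterion for string rewriting systems, and it is sound: the forward direction is immediate since $usv$ (resp.\ $l_1 = ul_2v$) is a one-step common ancestor of the two words appearing in each condition, and the backward direction is the usual trichotomy on the relative positions of the two redexes. Given $|x_1|\leq|x_2|$, the three cases $|x_1|+|l_1|\leq|x_2|$ (disjoint), $|x_2|+|l_2|\leq|x_1|+|l_1|$ (nested, handled by (ii) with $s=l_2$), and the remaining overlap case $|x_2|<|x_1|+|l_1|<|x_2|+|l_2|$ (handled by (i) with $s$ the common segment, which is nonempty precisely because $|x_2|<|x_1|+|l_1|$) do exhaust all possibilities, and the ``sandwiching'' step is legitimate because $\rightarrow^*$ is closed under left and right concatenation by arbitrary words. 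The only point worth making explicit is that in the overlap case you invoke hypothesis (i) for the \emph{ordered} pair $((l_1,r_1),(l_2,r_2))$ as dictated by which redex starts first; since the lemma quantifies over all ordered pairs of rules, the WLOG assumption $|x_1|\leq|x_2|$ costs nothing.
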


\begin{proof}[Proof of Lemma \ref{lem: FCRS}]
Note that the second condition of Lemma \ref{lem: locally confluent} does not apply to our system, as none of the $\alpha_i$ is a subword of another $\alpha_j$.

Note also that the first condition of Lemma \ref{lem: locally confluent} can only be applied for $l_1 = \alpha_0$ and $l_2 = \alpha_i$ for $0 \leq i \leq m$, because other pairings do not overlap. 

For $l_1 = \alpha_0$ and $l_2 = \alpha_i$ set $s_j = (ba^n)^j ba$ for $0 \leq j \leq m-1$. We want $\alpha_0 = u_js_j$ and $\alpha_i = s_j v_{i,j}$, so: $u_j = (ba^n)^m (a^nb)^{m-1-j}a^n$; $v_{0,j} = a^{n-1}(ba^n)^{m-1-j}(a^nb)^{m}a$; while for all other $i > 0$ one has $v_{i,j} = a^{n-1}(ba^n)^{m-1-j}(a^nb)^{m-i}a^n a$.

Now we want to find a word $w_{i,j}$ with $\beta_0 v_{i,j} \rightarrow ^* w_{i,j}$, and $u_j \beta_i \rightarrow ^* w_{i,j}$. Recall that $\beta_0 = a$, and $\beta_i = (a^nb)^{m-i} a^n (a^n b)^{m} a$. Actually, one can see  that $w_{i,j} = \beta_0 v_{i,j} = a v_{i,j}$, i.e. $w_{i,j} = \beta_0 v_{i,j}$ is reduced with respect to our system, and $u_j \beta_i$ gets reduced to $w_{i,j}$.
\end{proof}

\section{Undecidability in monadic one-relation monoids}\label{Sec:Guba-type-monoids}

The goal of this section is to prove the following result, the proof of which will use the results of the previous sections. 

\begin{theorem}\label{thm_UndecRatSubsetGubaType}
For all $m,n \geq 2$,  
the monoid 
\[
\M=
\Mpres{a, b}{(b a^n)^m(a^n b)^ma = a}
\]
contains a fixed rational subset in which membership is undecidable.
\end{theorem}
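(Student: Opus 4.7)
The plan is to invoke Theorem~\ref{thm_LClassEmbeddingTraceMonoidImproved}, which (according to the paper's overview) asserts that any left-cancellative monoid embedding the trace monoid $T(P_4)$ via four generators lying in a single Green $\mathcal{L}$-class has undecidable rational subset membership problem. So it suffices to verify these two hypotheses for $R_{m,n}$. Left-cancellativity of $R_{m,n}$ is immediate from the finite complete rewriting system of Lemma~\ref{lem: FCRS}: the rules in (ii) are length-preserving and the single shortening rule (i) has the form $W_{m,n}a \to a$ which preserves the trailing context, so that any equality $xu = xv$ reduces consistently on the left (this is also standard for monadic one-relation monoids $\Mpres{a,b}{bUa = a}$ via Adian's work).

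The bulk of the proof is producing the embedded copy of $T(P_4)$ with $\mathcal{L}$-related generators. I would combine the two embeddings already established in the paper: (i) Oganesian's algorithm yields that the suffix monoid $S_{m,n} = \Mgen{a, ba} \subseteq R_{m,n}$ embeds into $G_{m,n}$; and (ii) Lemma~\ref{A(P_4) inside G_{m,n}} provides an embedding of $A(P_4)$ into $G_{m,n}$ with generators $A = a(ba^n)^m a^{-1}$, $B = a^{2n}$, $C = (ba^n)^m$, $D = ba^{2n}b^{-1}$. By Paris's theorem, the positive submonoid of $A(P_4)$ generated by $A, B, C, D$ is a copy of $T(P_4)$. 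The idea is to exhibit positive words $p_A, p_B, p_C, p_D \in \{a,b\}^+$ representing $A, B, C, D$ in $G_{m,n}$ with each $p_X$ lying in the suffix monoid $S_{m,n}$. The elements $p_B := a^{2n}$ and $p_C := (ba^n)^m$ are already positive and visibly decompose as words in $\{a, ba\}^*$; for $p_A$ and $p_D$ I would use the identity $W_{m,n} = 1$ in the group (where $W_{m,n} = (ba^n)^m(a^nb)^m$) to produce positive substitutes for the negative factors $a^{-1}$ and $b^{-1}$ --- such positive expressions exist since $G_{m,n}$ is itself a one-relation monoid by Perrin--Schupp --- and then massage the result into the submonoid $\Mgen{a, ba}$ by appending suitable powers of $a$ when necessary. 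Because the suffix monoid injects into $G_{m,n}$, any identity that holds among the $p_X$ in $G_{m,n}$ transfers verbatim to $R_{m,n}$, and no additional identities are created. In particular, the defining commutations $[A, B] = [B, C] = [C, D] = 1$ of $T(P_4)$ hold among $p_A, p_B, p_C, p_D$ in $R_{m,n}$, and there are no further identities, so the submonoid they generate is indeed isomorphic to $T(P_4)$.

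The main obstacle is the remaining hypothesis that all four $p_X$ lie in a common $\mathcal{L}$-class of $R_{m,n}$. The crucial tool is the relator identity $W_{m,n} \cdot a = a$, which makes $a$ behave as an absorbing element for left-multiples of $W_{m,n}$. I would arrange each positive representative to factor as $p_X = q_X \cdot a$ with $q_X$ chosen so that there is a factorisation $W_{m,n}^{k_X} \equiv r_X q_X$ in the free monoid; then $r_X \cdot p_X = r_X q_X a = W_{m,n}^{k_X} a = a$ in $R_{m,n}$, exhibiting $a$ as a left multiple of each $p_X$. Since each $p_X$ is visibly a left multiple of $a$ (by construction of $q_X$), this puts all four $p_X$ simultaneously in the $\mathcal{L}$-class of $a$. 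The hardest step is therefore the concrete combinatorial design of $p_A$ and $p_D$ so that they simultaneously (a) represent $A$ and $D$ in $G_{m,n}$, (b) lie in the suffix monoid $S_{m,n}$, and (c) admit the factorisations through $W_{m,n}^{k_X}$ required for mutual $\mathcal{L}$-relation. Once this has been achieved, Theorem~\ref{thm_LClassEmbeddingTraceMonoidImproved} yields at once a fixed finitely generated submonoid of $R_{m,n}$ (in particular, a rational subset) in which membership is undecidable, completing the proof.
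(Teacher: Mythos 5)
You have correctly identified the paper's strategy: verify the hypotheses of Theorem~\ref{thm_LClassEmbeddingTraceMonoidImproved} for $\M$, namely left-cancellativity (which, as you note, follows from Adian's theorem because the two sides of the defining relation begin with distinct letters --- though your first justification, that it is ``immediate'' from the rewriting system of Lemma~\ref{lem: FCRS}, is not a valid argument on its own, since admitting a finite complete rewriting system does not imply left-cancellativity) and the existence of a copy of $T(P_4)$ generated by elements lying in a single $\mathcal{L}$-class. However, the substance of the proof is precisely the explicit construction you defer: you never produce the words $p_A$ and $p_D$, and the plan you describe for them is likely to fail as stated. The element $D = ba^{2n}b^{-1}$ need not lie in the submonoid $\Mgen{a,ba}$ of $G_{m,n}$ at all, and ``appending suitable powers of $a$'' changes the group element, so the result would no longer represent $D$. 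The paper's resolution is to change the generating set of the embedded $A(P_4)$ by a Tietze move, replacing $D$ with $\mu = DC = ba^na^{2n}(ba^n)^{m-1}$ (and $A$ with $\alpha = a(ba^n)^{m-1}ba^{n-1}$), which \emph{are} positive words in $\{a,ba\}^+$; this is the content of Lemma~\ref{lemma: copy of A(P4)}. Without some such adjustment your construction does not get off the ground.

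Two further points. First, to conclude that $\Mgen{p_A,p_B,p_C,p_D}\le \M$ is isomorphic to $T(P_4)$ one must verify that the three commutation relations actually hold \emph{in $\M$}, not merely in $G_{m,n}$; the paper does this by explicit computation with the rewriting system of Lemma~\ref{lem: FCRS}. Your alternative --- invoking injectivity of the suffix monoid $\Mgen{a,ba}\hookrightarrow G_{m,n}$ from Oganesian's algorithm to transfer the relations back --- is a legitimate shortcut in principle, but it only applies once the four representatives are exhibited inside $\Mgen{a,ba}$, which is the missing step. Second, for the $\mathcal{L}$-relation you propose a bespoke factorisation of each $p_X$ through powers of $W_{m,n}$; this is unnecessary and again unexecuted. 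The paper instead proves the clean general statement (Lemma~\ref{lem_LClassofa}) that \emph{every} word in $\{a,ba\}^+$ is $\mathcal{L}$-related to $a$ in any monoid $\Mpres{a,b}{ba^{i_1}\cdots ba^{i_k}ba=a}$, which disposes of this hypothesis for all four generators at once. In summary, your outline matches the paper's architecture, but the two concrete lemmas that constitute the actual proof are missing, and the one concrete design decision you do commit to (taking $p_D$ to represent $D$ itself) is the wrong one.
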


Since the first letters of the two sides of the defining relation are distinct, the monoid $\M$ is left-cancellative by Adian \cite[Theorem~3]{Adian1960}, so the only idempotent in $\M$ is the identity. The group of units in $\M$ is trivial since the defining relation is not of the form $w=1$. Hence, $\M$ does not contain any non-trivial groups; in particular, $\M$ does not embed $A(P_4)$. So Theorem~\ref{thm_UndecRatSubsetGubaType} cannot be proved by embedding $A(P_4)$, or embedding any other group for that matter. Theorem~\ref{thm_UndecRatSubsetGubaType} gives the first known examples of non-subspecial one-relation monoids with undecidable rational subset membership problem.  

To prove the theorem we will need to introduce a new approach which involves embedding the trace monoid $T(P_4)$ into $\M$ in a certain way. It is not the case that every left-cancellative monoid that embeds $T(P_4)$ has undecidable rational subset membership problem. For example, $T(P_4)$ itself is left-cancellative (it is even group-embeddable) and has decidable rational subset membership problem \cite{Diekert1990}. So the way in which the trace monoid embeds into $\M$ will be vital.       

Recall \cite[Chapter~II]{Howie1995} that two elements $x$, $y$ in a monoid $T$ are said to be $\gl$-related if $Tx = Ty$. Also note that it is immediate from the definition that $\gl$ is a right congruence, i.e. if $x \gl y$ then $xz \gl yz$ for any $z \in T$. This will be used implicitly throughout our proofs below. We will prove the following general result and then show that the hypotheses are satisfied by the one-relation monoid $\M$ above.    

\begin{theorem}\label{thm_LClassEmbeddingTraceMonoidImproved} 
Let $M$ be a finitely generated left-cancellative monoid and let $U \subseteq M$ such that $u v \gl v$ for all $u, v \in U$. 
If $\Mgen{U}$ is isomorphic to the trace monoid $T(P_4)$ then   
$M$ contains a fixed rational subset in which membership is undecidable.  
\end{theorem}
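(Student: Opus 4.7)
My plan is to reduce the Lohrey--Steinberg undecidability of submonoid membership in $A(P_4)$ to rational subset membership in $M$. Fix a finitely generated submonoid $S \le A(P_4)$ whose membership problem is undecidable, written on the generating set $U$ of $A(P_4)$ induced by the composition $\Mgen{U}\cong T(P_4)\hookrightarrow A(P_4)$; fix also a base element $v\in U$. The goal is to construct, effectively, a fixed rational subset $R\subseteq M$ together with a procedure turning a word $w$ over $U\cup U^{-1}$ into a representative $\phi(w)\in M$, in such a way that $w\in S$ holds in $A(P_4)$ if and only if $\phi(w)\in R$.

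The first technical ingredient is a short induction showing that, since $\gl$ is a right congruence and $uv\gl v$ holds for all $u,v\in U$, one has $yv\gl v$ in $M$ for every $v\in U$ and every non-empty $y\in\Mgen{U}$. Consequently the sets
\[
I_{u,v}=\{\alpha\in M \mid \alpha u v = v\}\qquad (u,v\in U)
\]
are non-empty. Elements of $I_{u,v}$ will play the role of \emph{partial inverses} of $u$ at $v$, and they let us simulate the inverse letter $u^{-1}$ at a chosen basepoint inside $M$. Left-cancellativity of $M$ is crucial here: although individual members of $I_{u,v}$ need not be unique, two choices cannot be later distinguished by right-multiplication by elements of $\Mgen{U}$, which is exactly what is needed to encode $A(P_4)$ faithfully on the right orbit of $v$.

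With this in hand, I would translate each generator $s_i$ of $S$ into a rational subset $R_i\subseteq M$ by reading its letters from left to right: a positive letter $u\in U$ contributes right-multiplication by $u$, while an inverse letter $u^{-1}$ contributes left-multiplication by an arbitrary element chosen from a rational set of partial inverses. Closure of rational subsets under product, union, and Kleene star then makes
\[
R=\{v\}\cdot(R_1\cup\cdots\cup R_N)^{*}
\]
a rational subset of $M$. Combining the faithfulness of the embedding $T(P_4)\hookrightarrow A(P_4)$ with left-cancellativity, one shows that $\phi(w)\in R$ exactly when $w\in S$, so an $R$-membership procedure in $M$ would yield an $S$-membership procedure in $A(P_4)$, contradicting Lohrey--Steinberg.

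The main obstacle, and the most delicate step, will be proving well-definedness and rationality of the translation: that the set of $M$-representatives of a single $A(P_4)$-element lies in one $\gl$-class of $M$, and that $R$ is genuinely rational despite the a priori infinite freedom in choosing each partial inverse. Both points hinge on the interplay between left-cancellativity and the $\gl$-hypothesis, together with the fact that $\Mgen{U}$ is precisely a copy of $T(P_4)$, so positive letters behave under exactly the commutation relations of $P_4$. I expect that a careful normal-form analysis of words over $U\cup U^{-1}$, simultaneously tracking the $\gl$-class of the current basepoint, will be required to finalize this step.
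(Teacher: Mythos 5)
There is a genuine gap, and it lies at the heart of your reduction: you propose to start from undecidability of membership in a finitely generated submonoid $S$ of the \emph{group} $A(P_4)$ and to simulate inverse letters inside $M$ via ``partial inverses'' $\alpha \in I_{u,v} = \{\alpha : \alpha u v = v\}$. This mechanism does not work. An element $\alpha$ with $\alpha(uv)=v$ cancels a $u$ sitting at the \emph{left} end of the string $uv$ by \emph{left} multiplication; it cannot undo a letter that your left-to-right reading has just appended on the \emph{right} of the accumulated product, since left and right multiplication do not interact in the way the simulation needs. More fundamentally, $M$ is merely left-cancellative and contains only the positive copy $\Mgen{U}\cong T(P_4)$, so a mixed word over $U\cup U^{-1}$ has no ``representative $\phi(w)\in M$'' at all, and the generators of the Lohrey--Steinberg submonoid of $A(P_4)$ genuinely involve inverses. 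There is also the issue you flag yourself but do not resolve: the sets $I_{u,v}$ are defined by an equation in $M$ and there is no reason for them to be rational, so $R=\{v\}\cdot(R_1\cup\cdots\cup R_N)^*$ is not known to be a rational subset.

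The paper's proof avoids all of this by never leaving the positive world. It extracts from the Lohrey--Steinberg argument a purely positive undecidable problem stated inside $T(P_4)$ itself: there are fixed rational languages $Q,R$ over the generators $u_1,\dots,u_4$ such that it is undecidable, given $i$, whether some $(x,y)\in Q\times R$ satisfies $x u_2^i = y$ in the trace monoid. It then fixes a \emph{single} word $v_{i,j}$ with $v_{i,j}u_iu_j=u_j$ for each pair (your observation that $uv\gl v$ propagates to products is used exactly here), and for $x\equiv u_{i_1}\cdots u_{i_n}$ builds $\overline{x}$ as the reversed telescoping product of the $v_{i,j}$ so that $\overline{x}\,x\,u_2=u_2$; a word-level substitution lemma shows $\overline{Q}$ is regular. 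Left-cancellativity then converts $xu_2^i=y$ into $u_2^i=\overline{x}y$, i.e.\ into membership of $u_2^i$ in the fixed rational subset $\overline{Q}R$. Note that the only cancellation ever performed is of an entire positive word from the left--hand end, which is precisely what the hypothesis $uv\gl v$ supports. Your intuition about using left-cancellativity to ``move the unknown to the other side'' is the right one, but to complete the argument you need to replace the group-level submonoid membership problem by this positive, trace-monoid-level intersection problem, and replace the rational families of partial inverses by a single fixed left-annihilator per generator pair.
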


By the trace semigroup of $P_4$ we mean the semigroup defined by the semigroup presentation $\Spres{a,b,c,d}{ab=ba, bc=cb, cd=dc}$. 

\begin{cor} 
If a left-cancellative monoid embeds a copy of the trace semigroup of $P_4$ 
that is 
contained in a single $\gl$-class of the monoid, then the monoid contains a fixed rational subset in which membership is undecidable.  
\end{cor}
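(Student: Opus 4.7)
The plan is to derive this corollary as a direct consequence of Theorem~\ref{thm_LClassEmbeddingTraceMonoidImproved}. Suppose $M$ is a finitely generated left-cancellative monoid and let $\varphi \colon \Spres{a,b,c,d}{ab=ba,\ bc=cb,\ cd=dc} \hookrightarrow M$ be a semigroup embedding whose image is contained in a single $\gl$-class of $M$. Write $S$ for this trace semigroup, and set $U = \{\varphi(a), \varphi(b), \varphi(c), \varphi(d)\} \subseteq M$. It suffices to verify the two hypotheses of the theorem for this choice of $U$.

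First I would show that $\Mgen{U} \cong T(P_4)$. Since $T(P_4) = S \sqcup \{1\}$ as sets, we have $\Mgen{U} = \varphi(S) \cup \{1_M\}$, and the desired isomorphism will follow once we know that $1_M \notin \varphi(S)$. But if $\varphi(x) = 1_M$ for some $x \in S$, then $\varphi(x^2) = 1_M \cdot 1_M = 1_M = \varphi(x)$, and injectivity of $\varphi$ forces $x^2 = x$ in $S$; however, $T(P_4)$ is cancellative (as is every trace monoid) and hence contains no non-identity idempotents, so $S$ has no idempotents at all, a contradiction. Thus $\varphi(S) \cup \{1_M\}$ is the disjoint union and satisfies on the set $U$ exactly the defining relations of $T(P_4)$, giving the isomorphism.

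Next I would verify the condition $uv \gl v$ in $M$ for all $u, v \in U$. Since $\varphi(S)$ is a sub-semigroup of $M$ containing $U$, both $uv$ and $v$ lie in $\varphi(S)$, and by hypothesis $\varphi(S)$ is contained in a single $\gl$-class of $M$, so $uv \gl v$ in $M$ as required. Applying Theorem~\ref{thm_LClassEmbeddingTraceMonoidImproved} now produces the desired fixed rational subset of $M$ in which membership is undecidable. The proof is little more than bookkeeping; the only real subtlety is the passage from a sub-semigroup embedding to a full submonoid embedding isomorphic to $T(P_4)$, which is handled by the short idempotent-free argument in the second paragraph.
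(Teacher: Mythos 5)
Your proof is correct and takes essentially the same route as the paper, which states this corollary as an immediate consequence of Theorem~\ref{thm_LClassEmbeddingTraceMonoidImproved} without further argument: take $U$ to be the images of the four generators, note that all products lie in the single $\gl$-class so $uv \gl v$, and observe that $\Mgen{U}$ is $T(P_4)$. Your extra check that $1_M$ does not lie in the image of the trace semigroup (via the absence of idempotents in a cancellative semigroup) is a small but legitimate piece of bookkeeping that the paper leaves implicit.
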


In addition to applying to our one-relation monoid example, the general result Theorem~\ref{thm_LClassEmbeddingTraceMonoidImproved} also has the following application to groups, since in a group every pair of elements are clearly $\gl$-related.  

\begin{cor}\label{cor_T4EmbedsInGroup}
If $G$ is a finitely generated group which embeds the trace monoid $T(P_4)$ then $G$ contains a fixed rational subset in which rational subset membership is undecidable. 
\end{cor}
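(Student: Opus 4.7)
The plan is a direct application of Theorem~\ref{thm_LClassEmbeddingTraceMonoidImproved}. First, I verify the structural hypotheses: any group is cancellative (in particular left-cancellative), and if $G$ is finitely generated as a group by a set $A$, then it is finitely generated as a monoid by $A \cup A^{-1}$. So the ``finitely generated left-cancellative monoid'' requirement is free.

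Next, given the assumed embedding $\iota \colon T(P_4) \hookrightarrow G$, I would let $U \subseteq G$ be the image under $\iota$ of the four standard generators of $T(P_4)$. Then $U$ has four elements and $\Mgen{U}$ is exactly $\iota(T(P_4))$, which is isomorphic to $T(P_4)$ since $\iota$ is injective. So the ``generates a copy of $T(P_4)$'' hypothesis is also immediate.

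The only remaining hypothesis to check is that $uv \,\gl\, v$ for all $u, v \in U$. This is where being in a group, rather than just in a left-cancellative monoid, does all of the work: for any $x \in G$ we have $Gx = G$, because every $g \in G$ can be written as $g = (gx^{-1}) \cdot x$. In particular, $G(uv) = G = Gv$ for any $u, v \in U$, so every two elements of $G$ (and in particular $uv$ and $v$) are $\gl$-related. Thus all hypotheses of Theorem~\ref{thm_LClassEmbeddingTraceMonoidImproved} hold, and the conclusion follows verbatim.

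There is essentially no obstacle in this argument: the entire content of the corollary is the trivial observation that in a group every pair of elements is $\gl$-related, so the otherwise delicate ``$\gl$-class condition'' in Theorem~\ref{thm_LClassEmbeddingTraceMonoidImproved} is automatically satisfied as soon as the ambient structure is a group. The point of stating the corollary is therefore to advertise the fact that trace-monoid embeddings into groups already suffice for the undecidability transfer, bypassing any need to analyse Green's relations in applications.
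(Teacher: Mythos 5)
Your argument is correct and is exactly the paper's justification: the corollary is stated as an immediate consequence of Theorem~\ref{thm_LClassEmbeddingTraceMonoidImproved}, with the paper noting only that ``in a group every pair of elements are clearly $\gl$-related,'' which is precisely the observation you make. The remaining checks you spell out (finite generation as a monoid via $A \cup A^{-1}$, left-cancellativity, and taking $U$ to be the image of the four standard generators) are the same routine verifications the paper leaves implicit.
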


Given this, a natural question is whether a group can embed $T(P_4)$ but not $A(P_4)$; this will be discussed at the end of this section.   

Before proving Theorem~\ref{thm_LClassEmbeddingTraceMonoidImproved} we will show how it can be applied to prove Theorem~\ref{thm_UndecRatSubsetGubaType}. For that we need the following lemma about the $\gl$-relation in one-relation monoids of the same form as our example.   

\begin{lemma}\label{lem_LClassofa} 
Let  
\[
M = \Mpres{a,b}{ba^{i_1}ba^{i_2} \ldots ba^{i_k} b a =a}
\] 
with $i_j \geq 1$ for all $1 \leq j \leq k
$. 
Then $w \gl a$ in $M$ for every word $w \in \{a,ba\}^+$. 
  \end{lemma}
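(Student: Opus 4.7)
The plan is induction on the number of blocks of $w$, viewed as a word over the alphabet $\{a, ba\}$. Write $R \equiv ba^{i_1}\cdots ba^{i_k}ba$, so that the defining relation asserts $R = a$ in $M$. Since every $w \in \{a,ba\}^+$ ends with the letter $a$, the inclusion $Mw \subseteq Ma$ is automatic, and only the reverse inclusion $a \in Mw$ requires attention.

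Green's relation $\gl$ is a right congruence on any monoid. If $w$ has at least two blocks, write $w = w'' c$ with $c \in \{a,ba\}$ and $w'' \in \{a,ba\}^+$ of smaller block count; the inductive hypothesis $w'' \gl a$ then yields $w = w''c \gl ac$, so the induction reduces (together with the trivial base cases $w = a$ and $w = ba$) to the two claims $aba \gl a$ and $a^2 \gl a$. Both base cases and the claim $aba \gl a$ follow immediately from the shape of the relator: because $i_k \geq 1$, the word $R$ has $ba$ and $aba$ as its length-$2$ and length-$3$ suffixes, so the identity $R = a$ already expresses $a$ as an element of $M \cdot ba$ and of $M \cdot aba$.

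The main step, and the only one requiring a genuine idea, is $a^2 \gl a$. The plan is to exploit the fact that $R$ both begins and ends with the two-letter word $ba$. Write $R = T \cdot ba = ba \cdot V$ with $T \equiv ba^{i_1}\cdots ba^{i_k}$ and $V \equiv a^{i_1-1}ba^{i_2}\cdots ba^{i_k}ba$; then the word $T \cdot ba \cdot V$ contains $R$ both as a prefix and as a suffix, and applying the defining relation in these two ways yields, in $M$, the identity
\[
a^{i_1}ba^{i_2}\cdots ba^{i_k}ba \;=\; ba^{i_1}\cdots ba^{i_{k-1}}ba^{i_k+1}.
\]
Prepending the letter $b$ converts the left-hand side into the relator $R$, which equals $a$; the right-hand side becomes a word ending in $a^{i_k+1}$, and since $i_k \geq 1$, the tail contains $a^2$. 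Factoring off this trailing $a^2$ exhibits an explicit element $X \in M$ with $a = X \cdot a^2$, establishing $a \in Ma^2$ and completing the induction.
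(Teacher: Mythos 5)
Your proof is correct and follows essentially the same route as the paper's: both arguments read $ba \gl a$ and $aba \gl a$ off suffixes of the relator, exploit the self-overlap of the relator at $ba$ (writing it as $T\cdot ba \equiv ba\cdot V$ and comparing $aV$ with $Ta$) to extract $a \in Ma^2$, and then finish by induction using that $\gl$ is a right congruence. The only differences are cosmetic: you peel off the last block where the paper collapses the first two, and you exhibit the witness for $a \in Ma^2$ explicitly rather than passing through $\alpha a \gl a$.
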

\begin{proof} 
Set $r \equiv ba^{i_1}ba^{i_2} \ldots ba^{i_k} b a$. Since $r$ begins and ends with $ba$ we can write $r \equiv \alpha ba \equiv ba \gamma$.  As $a=r=ba\gamma$ in $M$ and $\gamma$ ends in the letter $a$, it follows that $a \gamma \gl a$. In $M$ we have $ a \gamma = \alpha ba \gamma = \alpha a $ hence $\alpha a \gl a$.  Since $r \equiv \alpha ba$ and $i_k \geq 1$ it follows that  the last letter of $\alpha$ equals $a$, so we can write $\alpha \equiv \alpha' a$. But now $\alpha a \gl a$ implies $\alpha' a^2 \gl a$ so $\delta \alpha' a^2 = a$ for some word $\delta$. It follows that $a^2 \gl a$ in $M$.              

Next observe that $ba \gl a$ since $ba$ is a suffix of $r$, from which it follows that $ba(ba) \gl a(ba)$ and also $ba(a) \gl a(a)$.  Also $aba \gl a$ since $aba$ is a suffix of $r$.   We have shown $aa \gl a$, $a(ba) \gl a$, $(ba)(ba) \gl aba \gl a$ and $(ba)a \gl aa \gl a$, that is, all two-factor products of the words $\{a,ba\}$ are $\gl$-related to $a$.   Now the result follows by induction since setting $w_1 \equiv a$, $w_2 \equiv ba$, for any product  $ w_{i_1} w_{i_2} \ldots w_{i_k} $ with $k \geq 3$  from above we have $w_{i_1} w_{i_2} \gl a$ 
from which, since $\gl$ is a right congruence, it follows that   
$
w_{i_1} w_{i_2} w_{i_3} \ldots w_{i_k} \gl a w_{i_3} \ldots w_{i_k}
$
where by induction $a w_{i_3} \ldots w_{i_k} \equiv w_1 w_{i_3} \ldots w_{i_k} \gl a$. 
\end{proof}

The key to applying Theorem~\ref{thm_LClassEmbeddingTraceMonoidImproved} to $\M$ is to find an appropriately embedded copy of the trace monoid $T(P_4)$. This is achieved in the following lemma.  

\begin{lemma}\label{lemma: copy of A(P4)}
For all $m, n \geq 2$ the submonoid $T_4 = \Mgen{X}$ of $\M$ generated by
\[
X =  
\{ 
a(ba^n)^{m-1} b a^{n-1}, \;
a^{2n},  \;
(ba^n)^m,  \;
ba^n a^{2n} (ba^n)^{m-1}
\} 
\]
is isomorphic to the trace monoid $T(P_4)$.
\end{lemma}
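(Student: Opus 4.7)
The plan is to prove $T_4 \cong T(P_4)$ in three steps. First, the natural monoid homomorphism $\pi\colon \M \to G_{m,n}$ is well-defined because the relation $Wa = a$ of $\M$ (with $W = (ba^n)^m(a^nb)^m$) is a consequence of the group relation $W = 1$. Via the substitution $y \mapsto a$, $x \mapsto ba^n$ from \eqref{Eq:Gmn-positive-presentation} and Lemma~\ref{A(P_4) inside G_{m,n}}, the $A(P_4)$-subgroup of $G_{m,n}$ is generated by $A = a(ba^n)^m a^{-1}$, $B = a^{2n}$, $C = (ba^n)^m$, and $D = (ba^n)a^{2n}(ba^n)^{-1}$. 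A direct computation in the group then yields $\pi(X_1) = A$, $\pi(X_2) = B$, $\pi(X_3) = C$, and $\pi(X_4) = (ba^n)a^{2n}(ba^n)^{m-1} = D\cdot(ba^n)^m = DC = CD$.

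Second, I verify directly in $\M$ that the three defining commutation relations of $T(P_4)$ hold:
\[
X_1X_2 = X_2X_1, \quad X_2X_3 = X_3X_2, \quad X_3X_4 = X_4X_3.
\]
This is the step I expect to be the main obstacle, because in $\M$ one is only allowed to insert or delete the word $W$ immediately to the left of an occurrence of $a$, whereas in $G_{m,n}$ the analogous equalities follow straightforwardly from the stronger group relation $W = 1$. Each of the three commutations must therefore be exhibited by a carefully chosen sequence of such insertions and deletions of $W$ applied to appropriate positions in the product words. Once these are established, we obtain a surjective monoid homomorphism $\phi\colon T(P_4) \twoheadrightarrow T_4$ sending the $i$th standard generator of $T(P_4)$ to $X_i$.

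Finally, to prove $\phi$ is injective it suffices to show that the composition $\pi\circ\phi\colon T(P_4) \to A(P_4)$ is injective, since then $\phi(u) = \phi(v)$ forces $u = v$. The image of $\pi\circ\phi$ is the submonoid $N := \Mgen{A, B, C, CD} \leq A(P_4)$. Setting $E := CD$, so that $D = C^{-1}E$, the relation $[C,D]=1$ becomes $[C,E]=1$ while $[A,B]$ and $[B,C]$ are unchanged; hence $A(P_4)$ admits the alternative presentation $\Gpres{A, B, C, E}{[A,B],[B,C],[C,E]}$, which is again a RAAG on the path graph $P_4$. By Paris's theorem, the positive submonoid of this RAAG generated by $\{A, B, C, E\}$ is isomorphic to $T(P_4)$; but this positive submonoid is precisely $N$, and the induced isomorphism sends the four standard generators of $T(P_4)$ to $A, B, C, CD$ respectively. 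Consequently $\pi\circ\phi$ is this isomorphism, which forces $\phi$ to be injective and completes the proof.
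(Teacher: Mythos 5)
Your overall architecture is the same as the paper's: map into the group $M_{m,n}\cong G_{m,n}$, verify the three trace commutations inside $\M$ to get a surjection $T(P_4)\twoheadrightarrow T_4$, and then deduce injectivity by showing the composite into the group lands isomorphically on a copy of $T(P_4)$. Your first and third steps are correct and match the paper: the images of the generators are indeed $A,B,C,DC$, and your change of generators $E=CD$ (turning $[C,D]=1$ into $[C,E]=1$ so that $\Ggen{A,B,C,E}$ is again presented as $A(P_4)$ on a path, followed by Paris's theorem) is essentially the paper's Tietze-transformation argument.

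The genuine gap is your second step. You correctly identify the verification of $X_1X_2=X_2X_1$, $X_2X_3=X_3X_2$, $X_3X_4=X_4X_3$ in $\M$ as the main obstacle, but you do not carry it out: no sequence of insertions/deletions of $W=(ba^n)^m(a^nb)^m$ is exhibited, and without it there is no surjection $T(P_4)\to T_4$ and the proof does not close. This is precisely where the paper's earlier work pays off: Lemma~\ref{lem: FCRS} gives a finite complete rewriting system for $\M$, and rule (ii) with $i=m$ yields the single identity
\[
(ba^n)^m a^n a = a^n (a^n b)^{m} a
\]
in $\M$, from which all three commutations follow by elementary manipulations --- right-multiplying by $a^{n-1}$ gives $\gamma\beta=\beta\gamma$; left-multiplying by $a$ and right-multiplying by $a^{n-2}$ (using $n\geq 2$) gives $\alpha\beta=\beta\alpha$; and $\gamma\mu=\mu\gamma$ follows because $\gamma=(ba^n)^m$ commutes with each factor $ba^n$, $a^{2n}$, $(ba^n)^{m-1}$ of $\mu$. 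If you want to complete your plan without invoking the rewriting system, you would still need to produce an explicit derivation of this identity (or of the three commutations directly) from the single defining relation $Wa=a$; that derivation is not routine, since, as you note, $W$ may only be inserted or deleted immediately to the left of an occurrence of $a$, and finding the right positions is exactly the content the paper packages into its completion of the rewriting system.
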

\begin{proof}
Set 
\[
\alpha
=a(ba^n)^{m-1} b a^{n-1}, \;
\beta
=a^{2n},  \;
\gamma
=(ba^n)^m,  \;
\mu
=ba^n a^{2n} (ba^n)^{m-1}.
\]
Let $H = \Ggen{a(ba^n)^ma^{-1}, a^{2n}, (ba^n)^m, ba^{2n}b^{-1}} \leqslant M_{m,n}$, 
and denote by $A, B, C, D$ its $4$ generators, in the given order,  
recalling that $M_{m,n} = \Gpres{a, b}{(b a^n)^m(a^n b)^ma = a}$. In Section~\ref{Sec:SMMP-in-positive-OR-groups}, Remark \ref{A(P_4) inside G_{m,n}} was applied to show that $H \cong A(P_4)$ with $A$, $B$, $C$ and $D$ corresponding to the vertices in the path $P_4$ in this order. It then follows from a sequence of easy Tietze transformations that $\Ggen{\alpha,\beta,\gamma,\mu} \leq M_{m,n}$ is also isomorphic to $A(P_4)$ 
since $\alpha = a(ba^n)^{m-1} b a^{n-1}$ is equal to $A$ in $M_{m,n}$,
$\beta = B$, $\gamma = C$,   
and    
\[
\mu = ba^n a^{2n} (ba^n)^{m-1} = ba^{2n}a^n (ba^n)^{m-1} = ba^{2n}b^{-1}(ba^n)^m  = DC
\]
is equal to $DC$ in $M_{m,n}$. Since any trace monoid naturally embeds in its corresponding right-angled Artin group, it follows that 
$\Mgen{\alpha,\beta,\gamma,\mu} \leq M_{m,n}$ is isomorphic to the trace monoid $T(P_4)$ where $\alpha$, $\beta$, $\gamma$, $\mu$ correspond to the vertices on the path $P_4$ in this order.  
Let $\phi$ be the homomorphism $\phi: R_{m,n} \rightarrow M_{m,n}$ induced by the identity map on $\{a,b\}$. 
Then 
\[
\phi(T_4) = \phi(\Mgen{X}) = \Mgen{\phi(X)} =  \Mgen{\alpha,\beta,\gamma,\mu} \leq M_{m,n}
\]
is isomorphic to the trace monoid $T(P_4)$ where $T_4 = \Mgen{X} = \Mgen{\alpha,\beta,\gamma,\mu} \leq R_{m,n}$. So $\phi$ induces a surjective homomorphism from $T_4$ onto $\phi(T_4) \leq M_{m,n}$. To show that this defines an isomorphism between $T_4$ and $\phi(T_4)$ we need to show that $\phi$ is injective on the set $T_4$ which, since $\phi(T_4)$ is isomorphic to $T(P_4)$, means we need to show that the defining relations of the trace monoid hold between the generators of $T_4$ in the monoid $R_{m,n}$. Hence to complete the proof we just need to show that $\alpha \beta = \beta \alpha$, $\beta \gamma = \gamma \beta$ and $\gamma \mu = \mu \gamma$ all hold in $R_{m,n}$.

Using the rewriting rule (ii) for $i = m$ from Lemma \ref{lem: FCRS} we obtain
\begin{equation}\label{equation:ruleTwoi=m}
(ba^n)^m a^n a = a^n (a^n b)^{m} a.
\end{equation}
Multiplying Equation \eqref{equation:ruleTwoi=m} by $a^{n-1}$ on the right, we obtain $(ba^n)^m a^{2n} = a^{2n} (ba^n)^{m}$; i.e. $\gamma\beta = \beta\gamma$ in $R_{m,n}$.
Multiplying Equation \eqref{equation:ruleTwoi=m} by $a$ on the left, and $a^{n-2}$ on the right (note that $n \geq 2$) and writing $m = (m-1) + 1$ we obtain:
\[
[a(ba^n)^{m-1} ba^{n-1}]a^{2n}= a^{2n} [a(ba^n)^{m} ba^{n-1}],
\]
which means that $\alpha\beta = \beta\alpha$ in $R_{m,n}$.
Lastly, 
note that $\gamma = (ba^n)^m$ commutes with all the  words $ba^{n}$,  $a^{2n}$, and $(ba^n)^{m-1}$. So we obtain:
\[
\mu \gamma = [ba^n a^{2n} (ba^n)^{m-1}] \gamma = \gamma [ba^n a^{2n} (ba^n)^{m-1}] = \gamma \mu,
\]
as required. 
\end{proof}

We are now in a position to prove the main result of this section, which we presented at the very beginning.

\begin{proof}[Proof of Theorem~\ref{thm_UndecRatSubsetGubaType}]
Let $m,n \geq 2$,  and set $\M= \Mpres{a, b}{(b a^n)^m(a^n b)^ma = a}$. 
By Lemma~\ref{lemma: copy of A(P4)} the submonoid $T_4 = \Mgen{X}$ of $\M$ generated by
\[
X =  
\{ 
a(ba^n)^{m-1} b a^{n-1}, \;
a^{2n},  \;
(ba^n)^m,  \;
ba^n a^{2n} (ba^n)^{m-1}
\} 
\]
is isomorphic to the trace monoid $T(P_4)$, and since every word in $X$ belongs to  $\{a,ba\}^+$ it follows from 
Lemma~\ref{lem_LClassofa} that all non-empty products of elements of $X$ are $\gl$-related to each other, since they are all $\gl$-related to $a$.   
Since $M$ is left cancellative by   
Adian \cite[Theorem~3]{Adian1960}, 
the result now follows by applying 
Theorem~\ref{thm_LClassEmbeddingTraceMonoidImproved}. 
\end{proof}

The remainder of this section will be devoted to proving Theorem~\ref{thm_LClassEmbeddingTraceMonoidImproved} and then discussing some of its other consequences.  

First, the following result identifies a sufficient condition for constructing rational subsets of left-cancellative monoids in which membership is undecidable.   

\begin{theorem}\label{thm: general theorem undecidability}
Let $M$
be a left-cancellative monoid with a finite generating set $A$. 
Suppose that there exist rational languages $L, K, \overline{L} \subseteq A^*$,   
a surjective map $\alpha: L \longrightarrow \overline{L}$ with $\alpha(l) = \overline{l}$, and a fixed word $w \in A^*$, satisfying the following properties:
\begin{itemize}
\item[(i)] $\overline{l} l w = w$ in $M$, for all $l \in L$, and
\item[(ii)] it is undecidable whether there exists a pair $(l,k) \in L \times K$ satisfying $lw^i = k$ in $M$, for a given $i \in \N$.
\end{itemize}
Then the rational language $\overline{L}K \subseteq A^*$ defines a fixed rational subset of $M$ in which membership is undecidable.
\end{theorem}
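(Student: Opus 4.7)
The plan is to reduce the undecidable problem in hypothesis (ii) to the membership problem for the rational subset $\overline{L}K$ of $M$, using the word $w^i$ as the query element. If a decision procedure existed for membership in $\overline{L}K$, then on input $i \in \N$ one could decide whether $w^i \in \overline{L}K$ in $M$, and I will show this is equivalent to the existence of a witnessing pair $(l,k) \in L \times K$ with $l w^i = k$ in $M$, contradicting (ii). Note first that $\overline{L}K$ is rational as a concatenation of two rational languages, so its image under the canonical map $A^\ast \to M$ is a rational subset of $M$.

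The technical content is therefore the equivalence, for $i \geq 1$,
\[
w^i \in \overline{L}K \text{ in } M \iff \exists\,(l,k) \in L \times K \text{ such that } l w^i = k \text{ in } M.
\]
The ``$\Leftarrow$'' direction is immediate from (i): starting from $l w^i = k$ and setting $\overline{l} = \alpha(l)$, the relation $\overline{l}\, l\, w = w$ can be right-multiplied by $w^{i-1}$ to give $\overline{l}\, l\, w^i = w^i$, and substituting $l w^i = k$ yields $w^i = \overline{l} k \in \overline{L}K$ in $M$. For the ``$\Rightarrow$'' direction, suppose $w^i = \overline{l} k$ in $M$ for some $\overline{l} \in \overline{L}$, $k \in K$. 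By the surjectivity of $\alpha$, pick $l \in L$ with $\alpha(l) = \overline{l}$; applying (i) again gives $\overline{l}\, l\, w^i = w^i = \overline{l} k$ in $M$, and left-cancelling $\overline{l}$ using the left-cancellativity of $M$ concludes $l w^i = k$, a witness of the required form.

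The only genuine obstacle, which is mild, is ensuring the left-cancellativity hypothesis is invoked correctly in the ``$\Rightarrow$'' direction; without it the implication fails, so the hypothesis is essential rather than cosmetic. The degenerate case $i = 0$ can be sidestepped by restricting the reduction to $i \geq 1$, which does not affect the undecidability of~(ii). Combining the equivalence with the assumption that the existence problem in (ii) is undecidable then forces the membership problem for the fixed rational subset $\overline{L}K \subseteq M$ to be undecidable as well.
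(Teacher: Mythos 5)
Your proposal is correct and follows essentially the same route as the paper: both reduce instance $i$ of the problem in (ii) to the query ``$w^i \in \overline{L}K$?'' by using (i) to show $\overline{l}\,l\,w^i = w^i$ and left-cancellativity of $M$ to pass between $lw^i = k$ and $w^i = \overline{l}k$, with surjectivity of $\alpha$ ensuring every element of $\overline{L}$ arises as some $\overline{l}$. Your explicit remark about restricting to $i \geq 1$ is a minor (and harmless) extra precaution that the paper leaves implicit.
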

\begin{proof}
The language $\overline{L}K \subseteq A^*$ is clearly rational as both $\overline{L}$ and $K$ are. 

Let $i \in \mathbb{N}$. 
Since $M$ is left cancellative it follows
that for any  $(l,k) \in L \times K$ we have 
$lw^i = k$ in $M$ if and only if $\overline{l}lw^i = \overline{l}k$. 
Applying condition (i) gives $\overline{l}lw^i = \overline{l}lww^{i-1}  = w^i$, hence    
$lw^i = k$ in $M$ if and only if $w^i = \overline{l}k$.   

It follows that 
for a given $i \in \mathbb{N}$ 
there exists $(l,k) \in L \times K$ satisfying 
$lw^i = k$ if and only if there exists $(l,k) \in L \times K$ satisfying  
$w^i = \overline{l}k$ which is true if and only if $w^i \in \overline{L} K$,
since $\alpha$ being surjective implies that $\overline{L} = \{\overline{l} : l \in L \}$.    
But the former problem is undecidable by assumption (ii) and hence the latter problem must also be undecidable, i.e. membership in the rational subset $\overline{L}K$ of $M$ is undecidable. 
\end{proof}

Condition (ii) in the above theorem comes from the following result of Lohrey and Steinberg \cite{Lohrey2008} about the trace monoid $T(P_4)$.   

\begin{lemma}[Corollary of Proof of Theorem~2 in \cite{Lohrey2008}] \label{thm: nondecidability}
Let $T$ be the trace monoid of $P_4$ defined by    
\[T = \Mpres{u_1, u_2, u_3, u_4}{
u_1u_2 = u_2u_1, 
u_2u_3 = u_3u_2, 
u_3u_4 = u_4u_3}.
\]
Then there are two fixed rational subsets $Q, R \subseteq \{u_1, u_2, u_3, u_4\}^*$, 
with $Q$ not containing the empty word,    
such that 
it is undecidable whether there exists a pair $(x,y) \in Q \times R$ satisfying $x(u_2)^i = y$ in $T$, for a given $i \in \N$.
\end{lemma}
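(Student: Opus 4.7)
The plan is to extract this statement directly from the reduction in the proof of \cite[Theorem~2]{Lohrey2008}. Their argument encodes an undecidable problem (ultimately going back to the Aalbersberg--Hoogeboom undecidability of the intersection problem for rational subsets of $T(P_4)$, which in turn reduces from a variant of the Post Correspondence Problem or the halting problem for a two-counter machine) into a question of the following shape: given an instance $i \in \N$, decide whether $R \cap Q \cdot \{t_i\} \neq \emptyset$ for fixed rational subsets $Q, R$ of $T(P_4)$. This is precisely the form required by the lemma, since the condition ``$x u_2^i = y$ for some $(x,y) \in Q \times R$'' is equivalent to ``$R \cap Q \cdot \{u_2^i\} \neq \emptyset$'', provided $t_i$ is taken to be $u_2^i$.

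The key step is therefore to verify that the input to the encoded undecidable problem can be arranged to appear in the form $u_2^i$. This uses the structure of $P_4$: because $u_2$ is adjacent to $u_1$ and $u_3$ but not to $u_4$, a power $u_2^i$ commutes with any trace over $\{u_1, u_3\}$ while still carrying genuine non-commutative information against $u_4$. Hence $u_2$ is a natural token carrier for unary numeric input to a computation encoded in the trace monoid. If one chooses the underlying undecidable problem so that its input is a natural number (for instance, ``does a universal two-counter machine halt on unary input $i$?''), then the standard encoding of the initial configuration produces a word of the form $\gamma_1 u_2^i \gamma_2$ within an otherwise fixed context, and the fixed pieces $\gamma_1, \gamma_2$ can be absorbed into the rational subsets $Q$ and $R$ respectively.

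Finally, to enforce the side condition that $Q$ does not contain the empty word, I would pre-multiply every word of $Q$ by a fixed nonempty word (say $u_1$) and compensate by pre-multiplying every word of $R$ by $u_1$ on the left. This preserves rationality, ensures $Q \cap \{\varepsilon\} = \emptyset$, and does not affect whether the equation $x u_2^i = y$ has a solution, since $T(P_4)$ is cancellative. The main obstacle is the first step: verifying that the Lohrey--Steinberg reduction can indeed be arranged so that the only input-dependent factor is a power of $u_2$. If their original construction uses a different input encoding, a small preprocessing step (converting the input to unary and representing it as $u_2^i$) should suffice, using the freedom afforded by the commutation of $u_2$ with $u_1$ and $u_3$; the flexibility of trace-monoid encodings in the presence of a $P_4$-pattern makes this largely routine.
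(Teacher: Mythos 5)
Your high-level plan (extract the statement from the proof of Theorem~2 of Lohrey--Steinberg) is the right one, and your final step for removing the empty word from $Q$ by left-multiplying both $Q$ and $R$ by $u_1$ and invoking left cancellativity is correct, though the paper does not need it (the relevant language $\Omega$ already omits the empty word). But the heart of your argument has a genuine gap: you never pin down the actual shape of the Lohrey--Steinberg construction, and that shape does not immediately match the form the lemma requires. In their proof the input-dependent factor is the \emph{prefix} power $b^{2^m 3^n}$ of the variable language $K_{m,n} = b^{2^m3^n}\Omega$, i.e.\ the undecidable question is whether $u_2^i\,\Omega \cap L \neq \varnothing$, which corresponds to an equation of the form $u_2^i x = y$ --- not $x\,u_2^i = y$ as the lemma demands. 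Your proposed fix, ``absorb the fixed pieces $\gamma_1, \gamma_2$ into $Q$ and $R$,'' does not address this: a factor sitting to the \emph{right} of $u_2^i$ on the left-hand side of the equation cannot be absorbed into $R$, which lives on the other side of the equation, since the trace monoid is noncommutative and has no inverses; and a right quotient of a rational (as opposed to recognizable) subset of a trace monoid is not obviously rational. Your fallback --- re-engineer the underlying machine encoding so the input appears as $u_2^i$ in the right place --- amounts to reproving Theorem~2 of \cite{Lohrey2008} rather than deriving a corollary of its proof, and you leave it entirely unverified (``largely routine'').

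The missing idea in the paper's proof is a word-reversal argument: because the defining relations $u_ju_k = u_ku_j$ of $T(P_4)$ are invariant under reversal, two words are equal in $T(P_4)$ if and only if their reversals are, and reversal preserves regularity. Applying $\mathrm{rev}$ to $L$ and $\Omega$ converts the undecidable question ``$u_2^i\,\Omega \cap L \neq \varnothing$'' into ``$\Omega'\,u_2^i \cap L' \neq \varnothing$'' with $\Omega' = \mathrm{rev}(\Omega)$ and $L' = \mathrm{rev}(L)$ rational, which is exactly the form $x\,u_2^i = y$ with $(x,y) \in Q \times R$. Without this (or some substitute for it), your reduction does not produce the statement of the lemma.
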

\begin{proof} 
In the paper \cite[Proof of Theorem~2]{Lohrey2008} the authors take $\Sigma = \{a,b,c,d\}$ and work in the trace monoid $S = \Mpres{\Sigma}{ab=ba, bc=cb, cd=dc}$. They show that there is a fixed rational language $L \subseteq \Sigma^*$ and a family of languages $K_{m,n}$ with $m, n \in \mathbb{N}$ such that it is undecidable whether $K_{m,n} \cap L \neq \varnothing$ for given $m, n \in \mathbb{N}$. For our purposes the important thing is that $K_{m,n} = b^{2^m3^n} \Omega$ where $\Omega \subseteq \Sigma^*$ is a certain fixed rational language with the property that $\Omega$ does not contain the empty word.\footnote{In fact in \cite[Proof of Theorem~2]{Lohrey2008}, the authors use $K_{m,n}= b^{2^m3^n} \Omega$ where $\Omega = a(d(cb)^+a)^*dc^*$. However, this definition appears to have a typo, since for the equality in the displayed equation in the fourth-from-last line of their proof to be true, one should really take $\Omega$ to be $a(d(cb)^+a)^*dc^+$ because of the condition $j_l \geq 1$. This small change does not have any impact on the conclusion of their result or on our application of it.}        
Their argument shows that it is undecidable whether $b^{2^m3^n} \Omega \cap L \neq \varnothing$ for given $m, n \in \mathbb{N}$. 
It follows from the defining relations in the trace monoid $S$ that for any two words $u, v \in \Sigma^*$ we have $u=v$ in $S$ if and only if $\mathrm{rev}(u) = \mathrm{rev}(v)$ in $S$ where $\mathrm{rev}$ denotes the reverse of a word.  
Hence if we set $\Omega' = \{ \mathrm{rev}(u) : u \in \Omega \}$ and $L' = \{ \mathrm{rev}(l) : l \in L \}$ then $\Omega'$ and $L'$ are rational subsets of $\Sigma^*$, since word reversal clearly preserves the property of being a regular language. Furthermore, $\Omega'$ and $L'$ have the property that it is undecidable whether $\Omega' b^{2^m3^n} \cap L' \neq \varnothing$ for given $m, n \in \mathbb{N}$. 
In particular this means that 
there are fixed rational subsets $\Omega', L' \subseteq \Sigma^*$ such that 
it is undecidable whether $\Omega' b^{i} \cap L' \neq \varnothing$ for given $i \in \mathbb{N}$. 
Finally if we translate this into the notation of the trace monoid as defined in the statement of the lemma by substituting $a \mapsto u_1$, $b \mapsto u_2$, $c \mapsto u_3$ and $d \mapsto u_4$ we obtain the result, where $Q$ does not contain the empty word since $\Omega$ does not.  
  \end{proof}

We will also need the following lemma about a certain mapping on words that preserves the property of being a regular language. 

\begin{lemma}\label{lem_PresRegular}
Let $A = \{a_1, \ldots, a_n \}$ and for every pair $(i,j)$ of natural numbers with 
$i,j \in \{1, \ldots, n \}$  
choose and fix some $b_{i,j} \in A^*$. 
For every word $w = a_{i_1} a_{i_2} \ldots a_{i_{k-1}} a_{i_k}\in A^*$ of length at least $2$ define  
\[
\phi(a_{i_1} a_{i_2} \ldots a_{i_{k-1}} a_{i_k}) = 
b_{i_{k-1},i_k} 
b_{i_{k-2},i_{k-1}} 
\ldots 
b_{i_2,i_3} 
b_{i_1,i_2}.  
\]
If $L \subseteq A^*$ is a regular language, and every word in $L$ has length at least two, then $\phi(L) \subseteq A^*$ is also a regular language.     
  \end{lemma}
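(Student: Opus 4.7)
My plan is to realise $\phi$ as the composition of word reversal with a finite-state transduction, and then invoke standard closure properties of regular languages: that $L^R$ is regular whenever $L$ is, and that the image of a regular language under a generalized sequential machine (GSM) is regular. This avoids constructing an automaton for $\phi(L)$ directly.

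The idea is that if one reads a word $w = a_{i_1}\dots a_{i_k}$ from right to left, then the consecutive bigrams encountered are $(a_{i_k},a_{i_{k-1}}), (a_{i_{k-1}},a_{i_{k-2}}), \dots, (a_{i_2},a_{i_1})$, and these are exactly in the order in which the blocks $b_{i_{k-1},i_k}, b_{i_{k-2},i_{k-1}}, \dots, b_{i_1,i_2}$ appear in $\phi(w)$. Concretely, I would define a GSM $T$ over input and output alphabet $A$ with state set $\{q_0\}\cup\{q_1,\dots,q_n\}$, initial state $q_0$, every $q_i$ ($1\le i\le n$) final, and transitions: from $q_0$ on input $a_i$ move to $q_i$ outputting the empty word, and from $q_j$ on input $a_i$ move to $q_i$ outputting the fixed word $b_{i,j}$. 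Thus $q_j$ records the most recently read letter.

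A short induction on $k\geq 2$ then shows that feeding $w^R = a_{i_k}a_{i_{k-1}}\dots a_{i_1}$ into $T$ produces the output
\[
b_{i_{k-1},i_k}\,b_{i_{k-2},i_{k-1}}\,\cdots\,b_{i_1,i_2}\;=\;\phi(w),
\]
and the computation ends in the final state $q_{i_1}$, so the computation is accepting. Hence $\phi(L) = T(L^R)$, which is a GSM image of a regular language and is therefore regular.

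The only delicate point is the matching of indices in the previous display, which just amounts to reading the transition table of $T$ carefully; the hypothesis that every word in $L$ has length at least two is used precisely to ensure that $w^R$ drives $T$ into a final state (rather than stopping in $q_0$ on the empty input) and that the output $\phi(w)$ is well defined.
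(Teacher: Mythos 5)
Your proof is correct, and the transducer you describe does compute $\phi$: reading $w^R$ letter by letter while storing the previously read letter in the state, the transition from $q_{i_{t+1}}$ on input $a_{i_t}$ emits $b_{i_t,i_{t+1}}$, so the concatenated output is exactly $b_{i_{k-1},i_k}b_{i_{k-2},i_{k-1}}\cdots b_{i_1,i_2}=\phi(w)$, and the length-at-least-two hypothesis ensures at least one emitting transition fires and the run ends in a final state. The route differs from the paper's in the machinery invoked. The paper never mentions transducers: it factors $\phi$ as a composition $\gamma\circ\rho\circ\theta^{-1}\circ g\circ\sigma$ of operations each of which is separately and elementarily known to preserve regularity --- the doubling homomorphism $a_i\mapsto a_i^2$, deletion of the first and last letter (left and right quotients), the inverse of the injective homomorphism $c_{i,j}\mapsto a_ia_j$ which re-encodes the word as its sequence of bigrams over a new alphabet $C$, word reversal, and finally the homomorphism $c_{i,j}\mapsto b_{i,j}$. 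The underlying insight is the same in both arguments (namely that $\phi(w)$ depends only on the reversed sequence of bigrams of $w$), but your version packages the bigram extraction and the substitution into a single GSM and then appeals once to the closure of regular languages under GSM mappings, which is a heavier (though entirely standard) closure property; the paper's version is longer but uses only homomorphisms, inverse homomorphisms, quotients and reversal. Your argument is shorter and arguably more transparent, at the cost of citing transducer closure; the paper's is self-contained at a more elementary level. Either is acceptable.
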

\begin{proof} 
Let $L \subseteq A^*$ be a regular language, and suppose that every word in $L$ has length at least two. 
Let $\sigma:A^* \rightarrow A^*$ be the homomorphism defined by $a_i \mapsto a_i^2$ and set $L_1 = \sigma(L)$. Since the class of regular languages is closed under taking homomorphisms, it follows that $L_1$ is a regular language.     
Since every word in $L$ has length at least two it follows that every word in $L_1$ has length at least four. 
Now let $L_2$ be the language obtained by taking each word from $L_1$ and deleting the first letter and the last letter. Note that every word in $L_2$ has even length, and has length at least two.   
Since the left or right quotient of a regular language is again regular, and deletion of the last resp. first letter is an example of taking a left resp. right quotient of a language by a regular language, it follows that $L_2$ is a regular language.  
Let $g:L_1 \rightarrow L_2$ be the map that deletes the first and last letter of the input word, and $L_3$ the reversal of the language $L_2$, i.e. the language obtained by reversing every word.

Now define a new alphabet $C = \{ c_{i,j} : 1 \leq i,j \leq n \}$ and a homomorphism $\theta:C^* \rightarrow A^*$ defined by $c_{i,j} \mapsto a_i a_j$.     
Note that the homomorphism $\theta$ is clearly injective with image the set of all word in $A^*$ of even length.   
Since regularity is preserved under taking inverse images of homomorphisms, it follows that for any regular language $W \subseteq A^*$ the language $\theta^{-1}(W) \subseteq C^*$ is regular.   
Let $\rho:C^* \rightarrow C^*$ be the word reversing map which also preserves regularity. 

Now, given any word 
$w = a_{i_1} a_{i_2} \ldots a_{i_{k-1}} a_{i_k}\in A^*$ of length at least $2$ we have 
\[
g(\sigma(w)) = 
a_{i_1} 
a_{i_2} 
a_{i_2} 
a_{i_3} 
a_{i_3} 
\ldots
a_{i_{k-2}} 
a_{i_{k-2}} 
a_{i_{k-1}} 
a_{i_{k-1}} 
a_{i_k}
\]
is a non-empty word of even length and then 
\[
(\rho \circ \theta^{-1} \circ g \circ \sigma)(w) = 
c_{i_{k-1},i_k} 
c_{i_{k-2},i_{k-1}} 
\ldots 
c_{i_2,i_3} 
c_{i_1,i_2} 
\] 
Finally let $\gamma:C^* \rightarrow A^*$ be the homomorphism defined by $c_{i,j} \mapsto b_{i,j}$, so that  
\[
(\gamma \circ \rho \circ \theta^{-1} \circ g \circ \sigma) (a_{i_1} a_{i_2} \ldots a_{i_{k-1}} a_{i_k}) = 
b_{i_{k-1},i_k} 
b_{i_{k-2},i_{k-1}} 
\ldots 
b_{i_2,i_3} 
b_{i_1,i_2}.  
\]
It follows that the mapping $\phi$ in the statement of the lemma satisfies  
$\phi = \gamma \circ \rho \circ \theta^{-1} \circ g \circ \sigma$ and  
since as explained above all of the maps in this composition preserve regularity of follows that $\phi(L)$ is a regular language.    
  \end{proof}

We now have everything we need to prove our general theorem. 

\begin{proof}[Proof of Theorem~\ref{thm_LClassEmbeddingTraceMonoidImproved}] 
Let $M$ be a finitely generated left-cancellative monoid and let $U \subseteq M$ such that $u v \gl v$ for all $u, v \in U$, and $\Mgen{U}$ is isomorphic to the trace monoid $T(P_4)$.
Since every generating set of $T(P_4)$ must contain the four standard generators of the monoid that correspond to the four vertices of the path $P_4$ (this follows from the fact that the relations in the presentation of $T(P_4)$ are length preserving so one cannot recover the letters as products of words of greater length) it follows that $U$ contains a subset $Y = \{u_1, u_2, u_3, u_4\}$ corresponding to the standard generators of the trace monoid. 
Since the rational subsets of a monoid do not depend on the choice of finite generating sets, without loss of generality we can take the generating set $A$ for $M$ in the statement of the theorem to be a finite generating set for $M$ 
so that it contains the subset $Y \subseteq A$. 
So $Y \subseteq A$ with $Y = \{u_1, u_2, u_3, u_4\}$ where $T = \Mgen{Y}$ is isomorphic to the trace monoid $T(P_4)$ with the generators $u_1, u_2, u_3, u_4$ corresponding to the vertices in the path $P_4$ with $u_i$ adjacent to $u_{i+1}$ for all $i$, and by the assumptions in the statement of the theorem $u_i u_j \gl u_j$ for all $u_i, u_j \in Y$. 

Next, for every pair $i, j \in \{1,2,3,4\}$ we fix a word $v_{i,j} \in A^*$ such that $v_{i,j}u_iu_j = u_j$.
This is possible since $u_iu_j \gl u_j$. 
Then for any non-empty word $w = u_{i_1}u_{i_2} \ldots u_{i_n} \in Y^*$ we define 
\[
\overline{w} = v_{i_n,2} v_{i_{n-1},i_{n}} v_{i_{n-2},i_{n-1}} \ldots v_{i_3,i_4} v_{i_2,i_3} v_{i_1,i_2} 
\]
and observe that 
\[
\overline{w} w u_2 =
v_{i_n,2} v_{i_{n-1},i_{n}} v_{i_{n-2},i_{n-1}} \ldots v_{i_3,i_4} v_{i_2,i_3} v_{i_1,i_2}
u_{i_1} u_{i_2} \ldots u_{i_n}
u_2 
= v_{i_n,2} u_{i_n} u_2 
= u_2
\]
in $M$. 
 
By Theorem~\ref{thm: nondecidability} 
there are two fixed rational subsets $Q, R \subseteq Y^*$, with $Q$ not containing the empty word, such that it is undecidable whether there exists a pair $(x,y) \in Q \times R$ satisfying $x(u_2)^i = y$ in $M$, for a given $i \in \N$.
Since $Y \subseteq A$ it follows that $Q, R$ are also rational subsets of $A^*$ satisfying this property.  

Set 
$\overline{Q} = \{ \overline{w} : w \in Q \}$. 
We claim that $\overline{Q}$ is a rational subset of $A^*$. 
To see this note that
$Q$ is a rational subset of $A^*$ not containing the empty word, from which it follows that 
$Qu_2$ is a rational subset of $A^*$ in which every word has length at least two. 
For every word $w = u_{i_1} u_{i_2} \ldots u_{i_{k-1}} u_{i_k}\in A^*$ of length at least $2$ define  
\[
\phi(u_{i_1} u_{i_2} \ldots u_{i_{k-1}} u_{i_k}) = 
v_{i_{k-1},i_k} 
v_{i_{k-2},i_{k-1}} 
\ldots 
v_{i_2,i_3} 
v_{i_1,i_2}.  
\]
Since $Qu_2$ is a rational subset of $A^*$ it follows from Lemma~\ref{lem_PresRegular} that $\phi(Qu_2)$ is a rational subset of $A^*$. 
But $\overline{Q} = \phi(Qu_2)$, and hence $\overline{Q}$ is a rational subset of $A^*$, completing the proof of the claim.

Hence we have a left-cancellative monoid $M$ with finite generating set $A$,  
rational languages $Q, R, \overline{Q} \subseteq A^*$,   
a surjective map $\alpha: Q \longrightarrow \overline{Q}$ with $\alpha(x) = \overline{x}$, and a fixed word $u_2 \in A^*$, satisfying the following properties:
\begin{itemize}
\item[(i)] 
$\overline{x} x u_2 = u_2$ in $M$, for all $x \in Q$,
\item[(ii)] 
it is undecidable whether there exists a pair $(x,y) \in Q \times R$ satisfying $x(u_2)^i = y$ in $M$, for a given $i \in \N$.
\end{itemize}
Then by Theorem~\ref{thm: general theorem undecidability} 
the rational language $\overline{Q}R \subseteq A^*$ defines a fixed rational subset of $M$ in which membership is undecidable.
\end{proof}

A natural question is the following 

\begin{question} 
If $m=1$ or $n=1$ then does $\M=\Mpres{a, b}{(b a^n)^m(a^n b)^ma = a}$ have decidable rational subset membership problem?

We know from above that the group with the same presentation does have decidable rational subset membership problem when $m=1$ or $n=1$.   
  \end{question}

\begin{remark}\label{rem_MoreNonSubspecialExamples} 
We have seen that for all $m,n \geq 2$ the monoid 
$\M$
contains a fixed rational subset in which membership is undecidable.  
There are easy modifications of this example that can be used to obtain monoids of the form 
$\Mpres{a,b}{bUa=aVa}$ with the same property. One simple way to do this would be to replace $a$ by $aaa$ and $b$ by $bbb$ to obtain the family of monoids     
\[
\Mpres{a, b}{[bbb (aaa)^n]^m[(aaa)^n bbb]^m aaa = aaa}. 
\]
It is straightforward to show that the submonoid of this monoid generated by $\{aaa,bbb
\}$ is isomorphic to $\M$, and hence for every $m,n \geq 2$ this monoid contains a fixed rational subset in which membership is undecidable, and this monoid has the form  $\Mpres{a,b}{bUa=aVa}$.  
  \end{remark}

\begin{remark} 
In a very similar way to Example~\ref{ex_compressSpecial} the monadic examples constructed above can be combined with the theory of compression 
\cite{Lallement1974, Kobayashi2000}
to obtain many more examples of non-subspecial monoids with undecidable rational subset membership problem. Indeed, it is not difficult to show that one-step compression by an overlap-free word (in the sense of Kobayashi \cite{Kobayashi2000}) preserves the property of having decidable rational subset membership problem. So if a monoid compresses in finitely many steps to one of our monadic example above with undecidable rational subset membership problem then the original monoid will have the same property. To give an example, for $m,n \geq 1$ the monoid   
\[
T = \Mpres{x, y}{(xyyxy (xyxxy)^n)^m((xyxxy)^n xyyxy)^m  xyxxy=  xyxxy}
\]
is sealed by the self-overlap free word $xy$, and when we perform one-step compression with respect to this we obtain the monoid   
\[
\M= \Mpres{a, b}{(b a^n)^m(a^n b)^ma = a}. 
\]
It follows that for $m,n \geq 2$ the monoid $T$ above contains a fixed rational subset in which membership is undecidable. Clearly $T$ is not a subspecial monoid.   
There is evidence that the converse should be true, that is, a compressible one-relation monoid has decidable rational subset membership problem if and only if its full compression does.
Combining this with the comments above on subspecial monoids, the authors believe that the problem of classifying one-relation monoids with decidable rational subset membership problem should reduce to solving the problem for positive one-relator groups (for the subspecial case) and solving it for incompressible monoids (in the non-subspecial case).    
\end{remark}

\subsection{An application to the rational subset membership problem in groups}

Corollary~\ref{cor_T4EmbedsInGroup} shows that if $G$ is a finitely generated group which embeds the trace monoid $T(P_4)$ then $G$ contains a fixed rational subset in which rational subset membership is undecidable. 
Currently all known examples of one-relator groups with undecidable rational subset membership problem embed $A(P_4)$. This motivates the following question: 

 \begin{question} 
Is there a one-relator group which embeds the trace monoid $T(P_4)$ but does not embed $A(P_4)$?
  \end{question}
\noindent If such an example exists it would show that for one-relator groups embeddability of $A(P_4)$ is sufficient but not necessary for undecidability of the rational subset membership problem. 

In the following proposition we will give an example of a group $H$ with a finite subset $X$ such that $\Mgen{X}$ is isomorphic to the trace monoid $T(P_4)$ but $\Ggen{X}$ is not isomorphic to $A(P_4)$. In particular, since $H$ is a group embedding $T(P_4)$, it follows from Corollary~\ref{cor_T4EmbedsInGroup} that $H$ contains a rational subset in which membership is undecidable.  We do not know whether the group $H$ embeds $A(P_4)$. And the only way we know of proving that $H$ has undecidable rational subset membership problem is by using the fact that it embeds $T(P_4)$. This shows that Corollary~\ref{cor_T4EmbedsInGroup} can be usefully applied to examples.   

\begin{prop}\label{prop_traceMonEmbeds} 
Let 
\[
H = \Gpres{x,y,z,t}{tx=xt, xz=zx, zy=yz, y^2x = xy},
\]
and set $X = \{ t,x,z,xy\}$. Then  
\begin{itemize} 
\item[(i)] $\Mgen{X}$ is isomorphic to the trace monoid $T(P_4)$, while 
\item[(ii)] $\Ggen{X}$ is not isomorphic to the right-angled Artin group $A(P_4)$.  
  \end{itemize}
In particular the group $H$ contains a rational subset in which membership is undecidable.    
  \end{prop}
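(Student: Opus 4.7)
The plan is the following. For part (i), I would first verify that the four elements $t, x, z, xy$ of $H$ satisfy the defining commutation relations of $T(P_4)$ along the path $t\text{-}x\text{-}z\text{-}xy$: the relations $tx = xt$ and $xz = zx$ are imposed directly by the presentation, while $z(xy) = (zx)y = (xz)y = x(zy) = x(yz) = (xy)z$ follows by chaining the commutations $xz = zx$ and $zy = yz$. This yields a well-defined surjective monoid homomorphism $\phi \colon T(P_4) \twoheadrightarrow \Mgen{X}$.

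To establish injectivity of $\phi$, I would use the decomposition of $H$ as an amalgamated free product of groups $H = K \ast_{\langle x\rangle} L$, where $K = \langle x, y, z\rangle \cong \BS(1,2) \times \mathbb{Z}$ (with $z$ generating the central $\mathbb{Z}$-factor and $\BS(1,2) = \langle x, y \mid xyx^{-1} = y^2\rangle$), $L = \langle x, t\rangle \cong \mathbb{Z}^2$, and $\langle x\rangle$ embeds as an infinite cyclic subgroup in both $K$ and $L$. The main intermediate steps would be: first, verify that $\Mgen{x, xy}$ is a free monoid of rank two inside $\BS(1,2)$, which can be done in the faithful representation $\BS(1,2) \hookrightarrow \mathbb{Z}[1/2] \rtimes \mathbb{Z}$ by noting that $x = (0,1)$ and $xy = (2,1)$, so that a product of $n$ such generators has first coordinate $\sum_{i=1}^n 2^{i-1} a_i$ with $a_i \in \{0, 2\}$, whose binary expansion uniquely recovers the word; next, conclude, using centrality of $z$ in $K$, that $\Mgen{x, z, xy} \leq K$ is isomorphic to $T(P_3)$ on the path $x\text{-}z\text{-}xy$, while $\Mgen{t, x} \leq L$ is $\mathbb{N}^2 \cong T(P_2)$; finally, exploit the identification $T(P_4) = T(P_3) \ast_{\mathbb{N}_{x}} T(P_2)$ (monoid amalgamated product along the shared vertex $x$) to see that this amalgamation embeds into the group amalgamation $K \ast_{\langle x\rangle} L = H$ via Britton's normal form.

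For part (ii), the abelianization of $H$ is quickly computed: the three commutation relations are trivial in the abelianization, while $y^2 x = xy$ becomes $y = 0$, yielding $H^{\mathrm{ab}} \cong \mathbb{Z}^3$ (freely generated by $\{t, x, z\}$). Since $A(P_4)^{\mathrm{ab}} \cong \mathbb{Z}^4$, the two groups cannot be isomorphic. Since $y = x^{-1} \cdot xy$, we moreover have $\Ggen{X} = \langle t, x, z, y\rangle = H$, so $\Ggen{X} \not\cong A(P_4)$. The final assertion of the proposition---that $H$ contains a fixed rational subset in which membership is undecidable---then follows from part (i) together with Corollary~\ref{cor_T4EmbedsInGroup} applied to $H$.

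The hard part will be the last step of the injectivity argument in (i), namely verifying that the monoid amalgamated product $T(P_3) \ast_{\mathbb{N}_{x}} T(P_2)$ embeds into the group amalgamation $K \ast_{\langle x\rangle} L$. This amounts to showing, via Britton's normal form, that positive words representing different elements of the monoidal amalgamation yield distinct alternating normal forms in $H$, and in particular that the reductions available in $H$ (the intra-factor commutations of $z$ with $x$ and $xy$, and the sliding of $x$'s across $t$-borders via $tx = xt$) correspond exactly to the $T(P_4)$-equivalence relations.
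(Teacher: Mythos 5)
Your plan follows the same overall architecture as the paper's proof, so let me focus on where the two diverge. Both arguments split $H$ over the cyclic subgroup $\langle x\rangle$: you write $H = K \ast_{\langle x\rangle} \mathbb{Z}^2$ with $K \cong \mathbb{Z}\times\BS(1,2)$, while the paper views the same decomposition as the HNN extension of $K$ with stable letter $t$ centralizing $x$; these give the same group and the same normal-form theory. Both then reduce to showing $\Mgen{x,z,xy}\cong T(P_3)$ inside $K$ and lifting across the splitting. Where you genuinely differ is in the $T(P_3)$ step: the paper first embeds the monoid $\Mpres{x,y,z}{xz=zx,\ yz=zy,\ y^2x=xy}$ into $K$ using Adian's cycle-freeness criterion for group-embeddability and then reads off freeness of $\Mgen{x,xy}$ from a complete rewriting system, whereas you verify freeness directly in the affine representation $\BS(1,2)\hookrightarrow \mathbb{Z}[1/2]\rtimes\mathbb{Z}$ via uniqueness of binary expansions; your route is more explicit and self-contained (you do not even need faithfulness of the representation, only that it separates the positive words in question), so this is a perfectly good alternative. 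The step you flag as ``the hard part'' is exactly where the paper invests its effort: it isolates and proves a general statement (Theorem~\ref{SubmonoidsOfHNNExtensions}) asserting that for an HNN extension with $tat^{-1}=a$ the submonoid generated by $\Mgen{A}\cup\{t\}$ is presented by the old relations together with $at=ta$. When you carry out your Britton's-lemma argument, the point you must not gloss over is that the amalgamating elements $\alpha_i$ produced by the normal form theorem lie in the \emph{group} $\langle x\rangle$ and may be negative powers of $x$, while your monoid contains only positive powers; the paper handles this with an induction that treats the two signs of each $\alpha_i$ separately, moving $x^{\pm s}$ across the $t$'s via $tx=xt$ and multiplying the appropriate side of the partial equality by $\alpha_i^{\mp 1}$. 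Part (ii) (abelianizations $\mathbb{Z}^3$ versus $\mathbb{Z}^4$) and the final assertion via Corollary~\ref{cor_T4EmbedsInGroup} coincide with the paper's argument.
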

\begin{proof} 
(i)
Let $K = \Gpres{x,y,z}{xz=zx, yz=zy, y^2x = xy}$. Then $T = \Mpres{x,y,z}{xz=zx, yz=zy, y^2x = xy}$ naturally embeds in the group $K$.

To see this observe that with $K_1 = \Gpres{z}{} $ and $K_2 = \Gpres{x,y}{y^2x = xy} $ we have   
\[
K = K_1 \times K_2 \cong \mathbb{Z} \times BS(1,2), 
\]
while setting $T_1 = \Mpres{z}{}$ and $T_2 = \Mpres{x,y}{y^2x = xy}$ we have 
\[
T = T_1 \times T_2 \cong \mathbb{N}_0 \times  \Mpres{x,y}{y^2x = xy}. 
\] 
Both $T_1$ and $T_2$ are group embeddable; $T_1$ is free, and $T_2$ is left and right cycle-free, so is group-embeddable \cite[Theorem~5]{Adian1960}. Hence the natural maps $\phi_i: T_i \rightarrow K_i$ for $i=1,2$ define injective homomorphisms. But then it follows that the map $\phi \colon T_1 \times T_2 \rightarrow K_1 \times K_2 $ defined by $(t_1,t_2) \mapsto (\phi_1(t_1), \phi_2(t_2))$ defines an injective homomorphism from $T$ into $K$. Hence $T$ is group embeddable and thus $T$ embeds naturally into the group with the same presentation, that is, the identity map on $\{x,y,z\}$ defines an injective homomorphism from $T$ into $K$.     

Then $T = \Mpres{x,y,z}{xz=zx, yz=zy, y^2x = xy}$ is a finite complete presentation for the monoid $T$, and if we consider the submonoid of $T$ generated by $\{x, xy, z\}$ we see that the reduced form of any word in $\{x, xy, z\}^*$ belongs to the set $\{z\}^*\{x,xy\}^*$ since the rewrite rule $y^2x = xy$ can never be applied to a word in $\{x, xy, z\}^*$. We conclude that the submonoid of $T$, and hence also of $K$, generated by $\{x, xy, z\}$ is isomorphic to $\mathbb{N}_0 \times \{c,d\}^*$ i.e. it is isomorphic to the trace monoid $T(P_3)$ where the vertices of $P_3$ are $x, z, xy$ in that order (with $z$ being the middle vertex of the path).         

Now apply Theorem~\ref{SubmonoidsOfHNNExtensions} below to the HNN-extension $H$ of $K$ where   
\[
H =  \Gpres{x,y,z, t}{xz=zx, zy=yz, y^2x = xy, txt^{-1}=x}. 
\]
From the previous paragraph the submonoid of $K$ generated by $\{x, z, xy\}$ is isomorphic to the trace monoid $\Mpres{x,b,c}{xb=bx, bc=cb, cd=dc}$. 

Then by Theorem~\ref{SubmonoidsOfHNNExtensions} it follows that the submonoid of $H$ generated by $X = \{ t,x,z,xy\}$ is isomorphic to $\Mpres{t,x,b,c}{tx=xt, xb=bx, bc=cb, cd=dc} \cong T(P_4)$, as required.  

(ii) $\Ggen{X} \cong H$ which has abelianisation $\mathbb{Z}^3$ and hence cannot be isomorphic to $A(P_4)$ which has abelianisation $\mathbb{Z}^4$.     
\end{proof}

The proof of Proposition~\ref{prop_traceMonEmbeds} uses the following general result about submonoids of HNN extensions of groups, which is also of independent interest.   

\begin{theorem}\label{SubmonoidsOfHNNExtensions}
Let $G \cong \Gpres{B}{Q}$, let $A \subseteq B$, fix $a \in A$ and set 
\[
H = \Gpres{B,t}{Q, tat^{-1}=a}
\]  
which is an HNN-extension of $G$ with respect to the automorphism fixing the cyclic subgroup of $G$ generated by $a$. 
Let $M$ be the submonoid $\Mgen{A}$ of $G$ generated by $A$, and let $\Mpres{A}{R}$ be a presentation for $M$ with respect to the generating set $A$. 
Then 
\[
\Mgen{M \cup \{ t \}} \cong \Mpres{A,t}{R, at=ta}. 
\]   
\end{theorem}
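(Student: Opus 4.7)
The plan is to prove the theorem by showing that the canonical monoid homomorphism $\phi \colon N \to H$, where $N = \Mpres{A,t}{R, at = ta}$, is an isomorphism onto $\Mgen{M \cup \{t\}}$. Well-definedness of $\phi$ is immediate: the relations $R$ hold in $M$ and hence in its embeddings $M \hookrightarrow G \hookrightarrow H$, while $at = ta$ is precisely the defining relation of the HNN extension. Surjectivity is clear, so the task is to establish injectivity.

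For injectivity, suppose $w_1 \equiv u_0 t u_1 \cdots t u_k$ and $w_2 \equiv v_0 t v_1 \cdots t v_\ell$ with $u_i, v_j \in A^*$ represent the same element of $H$. The retraction $H \twoheadrightarrow \mathbb{Z}$ sending $G$ to $0$ and $t$ to $1$ forces $k = \ell$. I then apply Britton's lemma to the trivial word $\phi(w_1) \phi(w_2)^{-1} = u_0 t u_1 \cdots t u_k \cdot v_k^{-1} t^{-1} \cdots v_0^{-1}$ in $H$, whose only candidate pinches are the nested $t \cdots t^{-1}$ subwords bracketed between the positive and negative $t$-blocks. Reducing these from the innermost outward, and using that both associated subgroups of the HNN extension are $\langle a \rangle$ with identity stable-letter automorphism, I obtain integers $n_0 = n_{k+1} = 0$ and $n_1, \ldots, n_k \in \mathbb{Z}$ satisfying
\[
u_i a^{n_{i+1}} = a^{n_i} v_i \text{ in } G, \text{ for every } 0 \le i \le k.
\]

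With this data in hand, I proceed by induction on $k$ to prove that $w_1 = w_2$ already in $N$. The base case $k = 0$ reduces to $u_0 = v_0$ in $M$, which holds since $M \hookrightarrow G$. For the inductive step I split on the sign of $n_k$: when $n_k \geq 0$, the equation $u_k = a^{n_k} v_k$ holds in $M$, so in $N$ I use $R$ to rewrite the last slot of $w_1$ and then apply $at = ta$ to slide $a^{n_k}$ leftward past the final $t$, arriving at $w_1 = u_0 t u_1 \cdots t (u_{k-1} a^{n_k}) t v_k$ in $N$. The truncated pair $(u_0, \ldots, u_{k-2}, u_{k-1} a^{n_k})$ versus $(v_0, \ldots, v_{k-1})$, with updated index sequence $(n_0, \ldots, n_{k-1}, 0)$, satisfies the inductive hypothesis, so these agree in $N$; appending $t v_k$ yields $w_1 = w_2$. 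The case $n_k < 0$ is symmetric, using $v_k = a^{-n_k} u_k$ in $M$ to rewrite $w_2$ analogously.

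The principal obstacle is that $M$ has no inverses, so the integers $n_i$ extracted from Britton's lemma, being potentially negative, cannot be imported directly into $N$ as monoid identities. The sign case-split in the induction is precisely the device that converts each equation $u_i a^{n_{i+1}} = a^{n_i} v_i$ into a genuine $M$-identity involving only non-negative powers of $a$, which is then inherited by $N$ through $R$; this is the essential bridge from the group-theoretic Britton data to the monoid relations defining $N$.
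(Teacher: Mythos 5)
Your proposal is correct and follows essentially the same route as the paper: both reduce to injectivity of the canonical map $N=\Mpres{A,t}{R,at=ta}\to H$, extract the conjugating powers $a^{n_i}$ from Britton's lemma for the HNN normal form, and then run an induction with a sign case-split on the $n_i$ to convert each group identity $u_i a^{n_{i+1}}=a^{n_i}v_i$ into a positive-word identity in $M$ that transfers to $N$ via $R$ and the commutation $at=ta$. The only (cosmetic) difference is the direction of the induction: you peel off the last slot and absorb $a^{n_k}$ leftward, while the paper builds up prefixes left-to-right carrying a trailing $\alpha_{i+1}^{\pm1}$.
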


Before giving the proof, we recall the normal form in HNN-extensions, which is an immediate consequence of Britton's Lemma (see \cite[Ch. IV]{lyndon1977combinatorial}): 

\begin{lemma}[cf. Lemma 6.2 in \cite{dolinka2021new}]\label{lem: HNN normal form}
An equality of two reduced forms
\[
g_0 t^{\varepsilon_1} g_1 t^{\varepsilon_2} \cdots t^{\varepsilon_n} g_n = h_0 t^{\delta_1} h_1 t^{\delta_2} \cdots t^{\delta_m} h_m
\]
holds in the $\operatorname{HNN}$-extension
$G*_{t, \Phi: N_1 \rightarrow N_2}$ if and only if $n = m, \varepsilon_i = \delta_i$ for all $1 \leq i \leq n$, and there exist $1 = \alpha_0, \alpha_1, \ldots, \alpha_n, \alpha_{n+1} = 1 \in N_1 \cup N_2$ such that for all
$0 \leq i \leq n$ we have $\alpha_i \in N_1$ if $\varepsilon_i = -1$, $\alpha_i \in N_2$  if $\varepsilon_i = 1$, and
\[
h_i = \alpha_i^{-1} g_i (t^{\varepsilon_{i+1}} \alpha_{i+1}t^{-\varepsilon_{i+1}}).
\]
\end{lemma}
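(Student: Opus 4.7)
The plan is to set up the natural monoid homomorphism $\pi\colon N \to H$, where $N \coloneqq \Mpres{A,t}{R, at=ta}$, by extending the inclusion $A \hookrightarrow M \hookrightarrow G \hookrightarrow H$ and sending $t \mapsto t$. This is well-defined because each relation in $R$ holds in $M$ (hence in $H$), while $at=ta$ is a defining relation of $H$; the image is visibly $\Mgen{M \cup \{t\}}$. The entire statement therefore reduces to showing that $\pi$ is injective.

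Given two words $w_1 \equiv m_0 t m_1 t \cdots t m_n$ and $w_2 \equiv m_0' t m_1' t \cdots t m_{n'}'$ in $(A \cup \{t\})^*$ with $\pi(w_1) = \pi(w_2)$, I would invoke Lemma~\ref{lem: HNN normal form}. Since only positive powers of $t$ occur, no Britton pinching is possible and each expression is automatically reduced; the lemma then forces $n = n'$ and produces elements $\alpha_i \in \langle a \rangle$ with $\alpha_0 = \alpha_{n+1} = 1$ such that $m_i' = \alpha_i^{-1} m_i \alpha_{i+1}$ in $G$, the conjugate $t \alpha_{i+1} t^{-1}$ appearing in Lemma~\ref{lem: HNN normal form} collapsing to $\alpha_{i+1}$ because $t$ centralises $\langle a \rangle$ in $H$. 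Writing $\alpha_i = a^{k_i}$ with $k_i \in \Z$ (and $k_0 = k_{n+1} = 0$), the injectivity of $\pi$ reduces to the following combinatorial assertion: any two tuples $(m_0, \ldots, m_n), (m_0', \ldots, m_n') \in M^{n+1}$ related by $a^{k_i} m_i' = m_i a^{k_{i+1}}$ in $G$ must already represent the same element of $N$.

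I would establish this by lexicographic induction on the pair $(n, \sum_i |k_i|)$. The base case $n = 0$ is immediate: $m_0 = m_0'$ in $G$, hence in $M$ (since $M \hookrightarrow G$), hence in $N$ via the relations $R$. For the inductive step, the split is on the sign of $k_1$. When $k_1 = 0$, the identity $m_0 = m_0'$ already holds in $M$, and after cancelling this common left factor the tail tuples $(m_1, \ldots, m_n)$ and $(m_1', \ldots, m_n')$ carry only $n-1$ interior $t$'s and satisfy an HNN relation with the same $k_2, \ldots, k_n$, so the outer induction applies. When $k_1 > 0$, the identity $m_0' = m_0 a^{k_1}$ holds in $M$ (both sides being positive words over $A$), and in $N$ one can rewrite
\[
w_2 \;=\; m_0' t m_1' \cdots \;=\; m_0 \cdot a^{k_1} t \cdot m_1' \cdots \;=\; m_0 \cdot t \cdot (a^{k_1} m_1') \cdots
\]
using $k_1$ applications of $at=ta$. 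Setting $\tilde m_1 \coloneqq a^{k_1} m_1' \in M$, a direct check using the original HNN equations shows that $(m_0, \tilde m_1, m_2', \ldots, m_n')$ is HNN-related to $(m_0, m_1, \ldots, m_n)$ via the exponents $(0, k_2, \ldots, k_n)$, so $\sum_i |k_i|$ drops strictly by $k_1$ and the inner induction applies. The case $k_1 < 0$ is the mirror image: one uses $m_0 = m_0' a^{-k_1}$ in $M$ to rewrite $w_1$ instead, producing a new tuple whose exponents $(0, -k_2, \ldots, -k_n)$ have $|\cdot|$-sum strictly smaller by $|k_1|$.

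The main obstacle throughout is keeping every rewriting step genuinely inside $N$ rather than only inside $G$ or $H$. The HNN data gives relations like $m_0' = m_0 a^{k_1}$ and $a^{k_1} m_1' = m_1 a^{k_2}$ that a priori live only in $G$, and these translate into positive identities in $M$ (and hence into $R$-consequences usable in $N$) precisely when all exponents involved are non-negative. The case split on the sign of $k_1$ is exactly what makes it possible to ``shift'' $a$-powers across $t$ in the direction that keeps every intermediate expression positive, which is in turn exactly the operation that the sole extra defining relation $at=ta$ of $N$ permits.
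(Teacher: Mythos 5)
Your proposal does not prove the statement at hand. The statement is Lemma~\ref{lem: HNN normal form}, the normal-form criterion for when two reduced sequences represent the same element of a group HNN-extension $G*_{t,\,\Phi\colon N_1\to N_2}$. What you have written is instead a proof sketch of Theorem~\ref{SubmonoidsOfHNNExtensions} (injectivity of the natural map from $N=\Mpres{A,t}{R,\ at=ta}$ into the HNN-extension $H$), and your argument explicitly \emph{invokes} Lemma~\ref{lem: HNN normal form} in its second paragraph to produce the elements $\alpha_i\in\langle a\rangle$. As a purported proof of the lemma this is circular, and as a piece of mathematics it is aimed at a different target: nothing in your text addresses why equality of two reduced forms forces $n=m$, $\varepsilon_i=\delta_i$, and the existence of the interpolating elements $\alpha_i$. (For what it is worth, as an argument for Theorem~\ref{SubmonoidsOfHNNExtensions} your sketch is close in spirit to the paper's, which also sets up the homomorphism, applies the lemma, and then pushes the $\alpha_i$ across the $t$'s by an induction with a case split on the sign of the exponent; but that is not what you were asked to prove.)

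For comparison, the paper gives no proof of Lemma~\ref{lem: HNN normal form} at all: it records it as an immediate consequence of Britton's Lemma, citing \cite{dolinka2021new} and \cite[Ch.~IV]{lyndon1977combinatorial}. A correct self-contained argument would go roughly as follows. The ``if'' direction is a telescoping computation, since each $t^{\varepsilon_{i+1}}\alpha_{i+1}t^{-\varepsilon_{i+1}}$ lies in $G$ when $\alpha_{i+1}$ lies in the appropriate associated subgroup. For ``only if'', the hypothesised equality yields
\[
g_0 t^{\varepsilon_1}\cdots t^{\varepsilon_n} g_n\, h_m^{-1} t^{-\delta_m}\cdots t^{-\delta_1} h_0^{-1}=1,
\]
so by Britton's Lemma this expression must contain a pinch $t^{-\varepsilon}c\,t^{\varepsilon}$ with $c$ in the relevant associated subgroup. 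Because both original sequences are reduced, the only place such a pinch can occur is across the central junction; this forces $\varepsilon_n=\delta_m$ and $g_n h_m^{-1}$ to lie in $N_1$ or $N_2$ according to the sign of $\varepsilon_n$, which defines $\alpha_n$ and lets one cancel the last syllable on each side. Induction on $n$ then gives $n=m$, $\varepsilon_i=\delta_i$, and the relations $h_i=\alpha_i^{-1}g_i(t^{\varepsilon_{i+1}}\alpha_{i+1}t^{-\varepsilon_{i+1}})$. None of this appears in your proposal, so the proof of the stated lemma is entirely missing.
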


\begin{proof}[Proof of Theorem \ref{SubmonoidsOfHNNExtensions}]

Set $N =  \Mpres{A,t}{R, at=ta}$. There is an obvious morphism~$j\colon N \rightarrow H$, given by $j(a) = a$ for any $a \in A$, and $j(t) = t$. 

Moreover, the image $j(N)$ in $H$ is equal to $\Mgen{M \cup \{ t \}}$. To prove the result, it is enough to show that $j$ is injective as well. Let 
\begin{equation}\label{equality in j(N)}
g_0 t g_1 t \cdots t g_n = h_0 t h_1 t \cdots t h_m
\end{equation}
be an equality of elements in $j(N) = \Mgen{M \cup \{ t \}}$, where $M = \Mgen{A}$; i.e. $g_i, h_j \in M$ for all $0 \leq i \leq n$, and $0 \leq j \leq m$.

Equality \eqref{equality in j(N)} holds in the group $H$ as well, so we can use Lemma \ref{lem: HNN normal form} for $G \cong \Gpres{B}{Q}$, $N_1 = N_2 = \langle a \rangle \cong \Z$, $H = G*_{t, \Phi: \langle a \rangle \overset{id}{\longrightarrow} \langle a \rangle}$, and we obtain: 
\begin{center}
$n = m$, and there are $1 = \alpha_0, \alpha_1, \ldots, \alpha_n, \alpha_{n+1} = 1 \in \langle a \rangle$ with $h_i = \alpha_i^{-1} g_i (t \alpha_{i+1}t^{-1})$.
\end{center}
Since $t \alpha t^{-1} = \alpha$ for any $\alpha \in \langle a \rangle$, we obtain $h_i = \alpha_i^{-1} g_i \alpha_{i+1}$ in $H$.

{\bf Claim.}
Let $\alpha_{i+1} = a^s$ for some $0 \leq i \leq n$. The following equalities hold in $N$:
\[
\begin{array}{lcll}
h_0 t h_1 t \cdots t h_i & = & g_0 t g_1 t \cdots t g_i \alpha_{i+1}, & \text{ if $s \geq 0$}; \\
h_0 t h_1 t \cdots t h_i \alpha_{i+1}^{-1} & = & g_0 t g_1 t \cdots t g_i, & \text{ if $s < 0$}.
\end{array}
\]

\begin{proof}[Proof of Claim]
From the equality $h_i = \alpha_i^{-1} g_i \alpha_{i+1}$ in $H$, for $i=0$ one obtains $h_0 = g_0 \alpha_1$. Multiplying by $t$ on the right and using $\alpha_1 t = t \alpha_1$ (because $\alpha_1 \in \langle a \rangle$) we obtain the desired conclusion for $i = 0$.

Assume the result holds for a given $i < n$. We want to show that it holds for $i + 1$ as well. Set $\alpha_{i+2} = a^s$. 
\begin{itemize}[partopsep=0pt]
\item[(1)] Assume first that $h_0 t h_1 t \cdots t h_i = g_0 t g_1 t \cdots t g_i \alpha_{i+1}$ in $N$. Multiplying by $th_{i + 1}$, and using the commutation $\alpha_{i+1} t = t \alpha_{i+1}$ we obtain:
\begin{equation}\label{eqn: alpha_i+1 positive}
h_0 t h_1 t \cdots t h_i th_{i + 1} = g_0 t g_1 t \cdots t g_i t \alpha_{i+1} h_{i + 1}
\end{equation}
\begin{itemize}[partopsep=0pt]
\item[(a)] If $s \geq 0$ one has 
$\alpha_{i+1} h_{i+1} = g_{i+1} \alpha_{i+2}$ in $N$, which substituting in Equation \eqref{eqn: alpha_i+1 positive}, gives the desired conclusion.
\item[(b)] If $s < 0$ one has 
$\alpha_{i+1} h_{i+1}\alpha_{i+2}^{-1} = g_{i+1}$ in $N$. Multiplying Equation \eqref{eqn: alpha_i+1 positive} by $\alpha_{i+2}^{-1}$ on the right and substituting $\alpha_{i+1} h_{i+1}\alpha_{i+2}^{-1}$ by $g_{i+1}$, we obtain again the desired equality for $i+1$.
\end{itemize}
\item[(2)] Assume now that $h_0 t h_1 t \cdots t h_i \alpha_{i+1}^{-1} = g_0 t g_1 t \cdots t g_i$ in $N$. Multiplying by $tg_{i + 1}$, and using the commutation $\alpha_{i+1}^{-1} t = t \alpha_{i+1}^{-1}$ we obtain:
\begin{equation}\label{eqn: alpha_i+1 negative}
h_0 t h_1 t \cdots t h_i t \alpha_{i+1}^{-1} g_{i + 1} = g_0 t g_1 t \cdots t g_i t g_{i + 1}
\end{equation}
\begin{itemize}[partopsep=0pt]
\item[(a)] If $s \geq 0$ one has 
$h_{i+1} = \alpha_{i+1}^{-1} g_{i+1} \alpha_{i+2}$ in $N$. Multiplying Equation \eqref{eqn: alpha_i+1 negative} by $\alpha_{i+2}$ on the right and substituting $\alpha_{i+1}^{-1} g_{i+1} \alpha_{i+2}$ by $h_{i+1}$, we obtain again the desired equality for $i+1$.
\item[(b)] If $s < 0$ one has 
$h_{i+1}\alpha_{i+2}^{-1} = \alpha_{i+1}^{-1} g_{i+1}$ in $N$,  which substituting in Equation \eqref{eqn: alpha_i+1 negative}, gives the desired conclusion.
\end{itemize}
\end{itemize}
This shows the proof of our claim.
\end{proof}
Now the proof of the theorem is obtained by taking $i = n$ in our claim.
\end{proof}

\section{The word problem for positive special inverse monoids}\label{Sec:Positive-inverse-monoids}

It is an open question whether the word problem is decidable for all one-relation inverse monoids $\Ipres{A}{w=1}$ where $w$ is a reduced word. This problem is important since if the answer is yes then it would solve positively the longstanding open question of whether all one-relation monoids have decidable word problem.  In particular, this question is open in the case when $w \in A^+$ is a positive word. Interesting examples of positive one-relator special inverse monoids with counter-intuitive behaviour have been studied in the literature e.g. the O'Hare monoid \cite{Margolis1987}, and subsequent simpler examples \cite{NybergBrodda2022b}. Given that the question for positive one-relator inverse monoids remains open, it is natural to consider what happens in the case of two positive relators.  
Further motivation for studying positive two relator inverse monoids comes from the fact that for any 
pair of positive words 
$u, v \in A^+$ there is an isomorphism 
\[
\Ipres{A}{uv^{-1}=1} \cong 
\Ipres{A,t}{ut=1, vt=1}
\]  
which is a positive two-relator inverse monoid, and it follows from \cite{Ivanov2001} that the word problem for one-relation monoids reduces to the word problem for inverse monoids of this form. 
So the word problem for one-relation monoids reduces to the word problem for positive two-relator inverse monoids of the above form. 
While that question remains open,  
in this section we will show that in general the word problem for two-relator inverse monoids is not decidable.

\begin{theorem}\label{thm_2RelatorInverseUndecWP}
There is a positive two-relator inverse monoid $\Ipres{A}{u=1, v=1}$, 
where $u, v \in A^+$,   
with an undecidable word problem. 
Furthermore, the example may be chosen to be E-unitary.  
\end{theorem}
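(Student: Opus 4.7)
The plan is to apply Construction~\ref{construction_encoding} to a positive one-relator group with undecidable submonoid membership problem, and then to prove an inverse monoid analogue of Theorem~\ref{thm_TwoRelator}. By Theorem~\ref{thm_mainPositive}, for any $m, n \geq 2$ the positive one-relator group $G = G_{m,n}$ admits a fixed finitely generated submonoid $Q = \Mgen{w_1, \ldots, w_k}$ in which membership is undecidable. Applying Construction~\ref{construction_encoding} to the pair $(G, Q)$ produces the special inverse monoid
\[
M_{G,X} = \Ipres{B, x, t}{r = 1, \quad t z_1 s t \overline{z_1} s \cdots s t z_k s t \overline{z_k} s = 1},
\]
in which, by inspection of the construction, both defining relators are positive words over $B \cup \{x, t\}$. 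Thus the existential part of the theorem will follow as soon as the word problem for $M_{G,X}$ is shown to be undecidable.

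The heart of the proof is to establish an inverse monoid analogue of Theorem~\ref{thm_TwoRelator}: that the membership problem for $Q$ in $G$ reduces to the word problem for $M_{G,X}$. For a positive word $w$ over $A$, let $w'$ denote the word obtained by replacing each occurrence of $a$ by $tx$; then $w' = 1$ in the maximal group image $H_{G,X}$ plays the role of ``$w$ represents an element of $Q$''. From $w'$ one constructs an explicit word $W(w')$ over $B \cup \{x, t\} \cup (B \cup \{x,t\})^{-1}$ with the property that $W(w') = 1$ in $M_{G,X}$ if and only if $w$ represents an element of $Q$ in $G$. The structural point is that the first relator $ts = r = 1$ makes $s$ a right inverse of $t$, while the second relator is formed from $k$ ``paired blocks'' $(t z_i s)(t \overline{z_i} s)$, one per generator $w_i$ of $Q$. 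Reading such blocks and their inverses in the Schützenberger graph of the identity creates expansions corresponding precisely to factorizations of elements through the generators $w_i$; the positivity of the relators together with their specific shape ensures that the only collapses of $W(w')$ to $1$ in $M_{G,X}$ arise from genuine factorizations witnessing $w \in Q$.

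To arrange that the resulting inverse monoid is E-unitary, one must verify that every element of $M_{G,X}$ mapping to the identity of the maximal group image $H_{G,X} \cong G \ast \mathbb{Z}$ is an idempotent. The intended route is via Stephen's iterative construction of the Schützenberger graph of $1$: using the positivity of both relators, one shows that every closed loop at $1$ in this graph can only be assembled from idempotent building blocks, so no non-idempotent preimage of $1$ exists. If this geometric argument proves technically awkward, an alternative is to modify Construction~\ref{construction_encoding} (for instance by conjugating each relator by a fresh generator, or by appending bookkeeping letters) so as to guarantee E-unitarity while preserving the encoding. The main obstacle I anticipate is the ``only if'' direction of the reduction, namely showing that $W(w') = 1$ in $M_{G,X}$ forces $w \in Q$ in $G$. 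Ruling out spurious collapses arising from the interaction of the two positive relators requires a careful combinatorial analysis of the Stephen procedure for two-relator positive presentations, and is precisely the place where the two-relator situation is genuinely more delicate than the one-relator analysis underlying earlier undecidability results in \cite{gray2020undecidability}.
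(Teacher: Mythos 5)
Your starting point matches the paper's: take a positive one-relator group $G$ with a finitely generated submonoid $Q$ in which membership is undecidable (Theorem~\ref{thm_mainPositive}) and feed the pair into Construction~\ref{construction_encoding} to obtain the positive two-relator special inverse monoid $M_{G,X}$. But the core of the argument --- the reduction of membership in $Q$ to the word problem of $M_{G,X}$, and the E-unitarity --- is not actually carried out in your proposal. You never define the word $W(w')$, and the central claim that ``the only collapses of $W(w')$ to $1$ in $M_{G,X}$ arise from genuine factorizations witnessing $w \in Q$'' is asserted, not proved; you yourself flag the ``only if'' direction and the Stephen-procedure analysis for two positive relators as anticipated obstacles. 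Likewise the E-unitarity argument is only a sketch, with a fallback of ``modify the construction,'' which would change the object whose existence the theorem asserts. As it stands this is a plan for a proof, with the genuinely hard steps left open.

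The paper avoids any direct Sch\"utzenberger-graph analysis of the two-relator presentation. The mechanism (Theorem~\ref{thm_PosTwoRelatorGeneral}) is a chain of Tietze transformations: since $a$ is invertible in the group $\Mpres{A}{q=1}$ and $a \mapsto tx$ is a homomorphism onto $\Mpres{B,x,t}{r=1}$, the element $tx$ is a unit in $M_{G,X}$; using Lemma~\ref{lem_reduce} and the fact that $\overline{z_j} = z_j^{-1}$ modulo $r=1$, the single long second relator splits into the $k$ relations $(t z_j t^{-1})(t z_j^{-1} t^{-1}) = 1$. Further Tietze moves (reintroducing $a = tx$, eliminating $x$) convert the presentation into exactly the form
\[
\Ipres{A,t}{q=1,\ (t w_1 t^{-1})(t w_1^{-1} t^{-1}) = 1,\ \ldots,\ (t w_k t^{-1})(t w_k^{-1} t^{-1}) = 1},
\]
to which \cite[Theorem~3.8]{gray2020undecidability} applies directly, yielding in one stroke both the E-unitarity and the fact that decidability of the word problem would decide membership in $Q = \Mgen{w_1,\ldots,w_k}$. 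This reduction to an already-analysed presentation shape is the missing idea in your proposal; without it (or a complete substitute for the combinatorial analysis you defer), the proof is incomplete.
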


\begin{remark} 
Given that the word problem for two-relator groups is open, and also for two-relator monoids is open, it is natural to ask whether the groups or monoids defined by the presentations given by Theorem~\ref{thm_2RelatorInverseUndecWP} have decidable word problem. It is a consequence of the proof of  Theorem~\ref{thm_2RelatorInverseUndecWP} that the corresponding groups are in fact isomorphic to one-relator and hence have decidable word problem by Magnus' theorem. It is less obvious whether the corresponding two relator special monoids have decidable word problem, but by Makanin \cite{Makanin1966, Makanin1966b} they have groups of units that are two-relator groups, and the word problem reduces to that of the group of units. So those monoids are likely to have decidable word problem, else we would have an example of a two-relator group with undecidable word problem (namely the group of units of the monoid), and the word problem for two-relator groups is a famous open problem, cf. \cite[Problem~9.29]{Kourovka2020}
  \end{remark}

The following standard lemma will be helpful in proving the main result of this section; for a proof see e.g. \cite[Corollary 3.2]{gray2020undecidability}.

\begin{lemma}\label{lem_reduce} 
Let $M = \Ipres{A}{R}$. 
If $xaa^{-1}y \in (A \cup A^{-1})^*$ is right invertible in $M$, where $a \in A \cup A^{-1}$ and
 $x, y \in (A \cup A^{-1})^*$, then $xaa^{-1}y=xy$ in $M$.   
  \end{lemma}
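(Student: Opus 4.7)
The plan is to exploit the natural partial order on the inverse monoid $M$, defined by $s \leq t$ if and only if $s = et$ for some idempotent $e \in M$, an order which is compatible with multiplication on both sides. Since $aa^{-1}$ is an idempotent in any inverse monoid, one has $aa^{-1} \leq 1$, and applying compatibility with multiplication gives $xaa^{-1}y \leq xy$. The lemma therefore reduces to the following general fact, which I would prove as the main step: if $u \leq v$ in an inverse monoid and $u$ is right invertible, then $u = v$.

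To establish this, I would first observe that in an inverse monoid right invertibility already implies two-sided invertibility. Indeed, if $us = 1$ for some $s \in M$, then using the defining identity $u = uu^{-1}u$ one computes
\[
uu^{-1} = uu^{-1} \cdot (us) = (uu^{-1}u)s = us = 1,
\]
so $u$ is a unit whose group-theoretic inverse coincides with its inverse-semigroup inverse $u^{-1}$. Next, writing $u = ev$ with $e$ an idempotent, one has $u^{-1} = v^{-1}e^{-1} = v^{-1}e$, and since idempotents in an inverse monoid commute, $uu^{-1} = evv^{-1}e = e(vv^{-1})$. As this equals $1$, and since a product $ef$ of two idempotents can equal $1$ only when $e = f = 1$ (multiply by $e$ to get $e = ef = 1$), we conclude $e = 1$, hence $u = ev = v$.

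Combining these two steps with the inequality $xaa^{-1}y \leq xy$ yields the desired equality $xaa^{-1}y = xy$ in $M$. No significant obstacle is expected: the statement is a routine consequence of the structure of inverse monoids, and the only step requiring any computation is the short verification that $uu^{-1}=1$ whenever $us=1$, which follows immediately from the defining identity of the inverse.
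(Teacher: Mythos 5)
Your argument is correct, and it reaches the conclusion by a slightly different route from the one the paper relies on. The paper does not prove the lemma itself but cites \cite[Corollary~3.2]{gray2020undecidability}, where the standard argument observes that the prefix $xa$ of the right invertible word $xaa^{-1}y$ is itself right invertible, hence $xaa^{-1}x^{-1}=(xa)(xa)^{-1}=1$, and then inserts this identity into $xy$ and commutes the idempotents $aa^{-1}$ and $x^{-1}x$ to recover $xaa^{-1}y$. You instead isolate the general statement that a right invertible element of an inverse monoid is maximal in the natural partial order, and deduce the lemma from $aa^{-1}\leq 1$ together with compatibility of $\leq$ with multiplication. Both arguments rest on exactly the same two facts --- that $uu^{-1}=1$ whenever $u$ is right invertible, and that idempotents commute --- so the difference is one of packaging; your version has the small advantage of producing a reusable order-theoretic statement rather than a computation tied to the particular word $xaa^{-1}y$.

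One assertion in your write-up is false, although harmlessly so: right invertibility does \emph{not} imply two-sided invertibility in an inverse monoid, and $uu^{-1}=1$ does not make $u$ a unit. The bicyclic monoid $\Mpres{b,c}{bc=1}$, which is an inverse monoid with $b^{-1}=c$, already gives a counterexample: $bb^{-1}=bc=1$ while $b^{-1}b=cb\neq 1$. Fortunately your proof never uses the unit claim; it only uses the correctly derived identity $uu^{-1}=1$, so the argument stands once that sentence is removed. (Indeed, if the lemma applied only to units it would be useless for its intended purpose in Section~\ref{Sec:Positive-inverse-monoids}, where it is applied to proper prefixes of the relator words, which are right invertible but typically not invertible.)
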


The main result of this section, Theorem~\ref{thm_2RelatorInverseUndecWP}, will follow from the following general result that shows how to encode the submonoid membership problem in any positive one-relator group into a positive two-relator inverse monoid.

\begin{theorem}\label{thm_PosTwoRelatorGeneral} 
Let $G$ be a positive one-relator group, and let $Q$ be any finitely generated submonoid of $G$. Then there exists a positive two-relator inverse monoid $M=\Ipres{A}{u=1, v=1}$, with $u, v \in A^+$, such 
the membership problem for $Q$ in $G$ reduces to 
the word problem of $M$. 
Furthermore, $M$ can be chosen to be E-unitary and have maximal group image isomorphic to  
$G \ast \mathbb{Z}$. 
\end{theorem}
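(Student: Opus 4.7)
The plan is to apply Construction~\ref{construction_encoding} directly. Since $G$ is a positive one-relator group, by Perrin--Schupp \cite{Perrin1984} it admits a one-relation monoid presentation $\Mpres{A}{q=1}$, and consequently every element of $G$ is represented by a positive word over $A$. Choose such a positive generating set $X = \{w_1,\ldots,w_k\}\subseteq A^+$ of $Q$, and let $M := M_{G,X}$. By construction, the two defining relators $u = r$ and $v = tz_1st\overline{z_1}s\cdots s t\overline{z_k}s$ are both nonempty positive words in $(B\cup\{x,t\})^+$, so $M$ has the required form.

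First I would identify the maximal group image by Tietze transformations on $H = \Gpres{B,x,t}{r=1,v=1}$. Introducing $a := tx$ rewrites $r$ back to $q$; using $r = ts = 1$ to eliminate $s = t^{-1}$ in the group, the second relator collapses to
\[
v = t\cdot (z_1\overline{z_1})(z_2\overline{z_2})\cdots(z_k\overline{z_k})\cdot t^{-1},
\]
each factor $z_i\overline{z_i}$ equalling $w_iw_i^{-1}=1$ in $G$; hence $v=1$ is redundant and, after eliminating $x$, we obtain $H \cong \Gpres{A,t}{q=1} \cong G\ast\mathbb{Z}$.

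Next I would prove that $M$ is $E$-unitary. This is the technical heart of the argument, and I expect it to be the main obstacle. The approach is to adapt the $E$-unitary techniques from \cite{Ivanov2001} and \cite{gray2020undecidability}: one shows that any word over $B\cup\{x,t\}^{\pm}$ representing the identity in $H$ is idempotent in $M$. The ``bracketed'' shape of $v$ as an interleaving of the positive words $tz_is$ and their designated inverses $t\overline{z_i}s$ is precisely what controls the rewriting; the consequence (derived from $v=1$ together with $vv^{-1}=v^{-1}v=1$) that every factor $tz_is$ and $t\overline{z_i}s$ is a two-sided unit of $M$ is the crucial ingredient, and it is at this point that the positivity of both relators is essential.

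Finally I would reduce the membership problem for $Q$ in $G$ to the word problem of $M$ as follows. Given $w\in A^+$ form $\phi(w)\in (B\cup\{x,t\})^+$ by replacing each letter $a$ by $tx$. I claim
\[
w\in Q \text{ in } G \iff (t\phi(w)s)(t\phi(w)s)^{-1} = 1 \text{ in } M,
\]
i.e.\ $t\phi(w)s$ is a unit of $M$. For the forward direction, a factorisation $w = w_{i_1}\cdots w_{i_n}$ in $G$ yields, via $v=1$ and the consequent identities $t\overline{z_i}s = (tz_is)^{-1}$ in $M$, an explicit unit representation of $t\phi(w)s$ as a product of the units $tz_{i_j}s$ interspersed with the idempotents $st$ induced by $r=1$. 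For the converse direction, the $E$-unitary property reduces the test for being a unit to an equality in $H = G \ast \mathbb{Z}$: the image of $t\phi(w)s$ is $twt^{-1}$, which lies in the subgroup $\Ggen{tw_1t^{-1},\ldots,tw_kt^{-1}}$ precisely when $w\in Q$, by standard normal-form considerations in the free product. This delivers the required reduction, and since the construction is independent of which $w$ is queried, the word problem of $M$ is at least as hard as the membership problem for $Q$ in $G$.
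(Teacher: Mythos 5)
There is a genuine gap, and it sits exactly where you predicted the difficulty would be. Your overall architecture (apply Construction~\ref{construction_encoding}, compute the maximal group image $G\ast\mathbb{Z}$ by Tietze moves, establish E-unitarity, then reduce membership to right-invertibility of an element of the form $t\phi(w)s$) matches the paper's strategy, and your group-image computation is correct. But the converse direction of your final equivalence is broken in two ways. First, E-unitarity does \emph{not} reduce the test ``is $t\phi(w)s$ a right unit of $M$'' to an equality or membership condition in the maximal group image $H$: E-unitarity only says that elements mapping to $1\in H$ are idempotents. Already in the free inverse monoid on one generator (which is E-unitary over $\mathbb{Z}$) the generator maps to a unit of the group but satisfies $aa^{-1}\neq 1$. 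Second, even granting such a reduction, membership of $twt^{-1}$ in $\Ggen{tw_1t^{-1},\ldots,tw_kt^{-1}}$ is equivalent to $w\in\Ggen{w_1,\ldots,w_k}$, the \emph{subgroup} generated by the $w_i$, not the submonoid $Q=\Mgen{w_1,\ldots,w_k}$. Conflating these destroys the theorem's content: the whole point of the construction is to encode submonoid membership, which is strictly harder than subgroup membership, and no free-product normal-form argument in $G\ast\mathbb{Z}$ can see the difference.

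The hard direction (right-invertibility of $twt^{-1}$ forces $w\in Q$) requires an analysis of the right units of the special inverse monoid itself, not of its group image, and this is also where your E-unitarity claim is left as an unproven sketch. The paper sidesteps both issues by a two-stage Tietze argument: it first shows the long second relator is equivalent, over $r=1$, to the $k$ relations $(tz_jt^{-1})(tz_j^{-1}t^{-1})=1$, and then rewrites the presentation into the exact form
$\Ipres{A,t}{q=1,\ cc^{-1}=1,\ c^{-1}c=1\ (c\in A),\ (tw_jt^{-1})(tw_j^{-1}t^{-1})=1}$
covered by \cite[Theorem~3.8]{gray2020undecidability}, which supplies \emph{both} the E-unitarity and the correct statement that decidability of the word problem yields decidability of membership in the submonoid $Q$. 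To repair your proof you would either need to carry out that reduction to the citable result, or reprove the right-unit analysis of \cite{gray2020undecidability} from scratch; the step you labelled ``the main obstacle'' cannot be discharged by E-unitarity plus normal forms as written.
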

\begin{proof} 
Let $M$ be the inverse monoid defined by the two-relator positive presentation  
\[
M = M_{G,X} = \Ipres{B,x,t}{
r = 1, \quad 
t z_1 s t\overline{z_1} s  \ldots  s t z_k s t\overline{z_k} s = 1
}
\]
given by Construction~\ref{construction_encoding}. 
We claim that if $M$ has decidable word problem then the membership problem for $Q$ within $G$ is decidable.     
Furthermore we shall show that $M$ is E-unitary and has maximal group image isomorphic to $G \ast \mathbb{Z}$.

To establish this,  
we will perform a series of Tietze transformations on the presentation. 
This will come in two stages. 
First we will apply a set of Tietze transformations to prove that  
$M \cong T$ where 
\[
T = \Ipres{B,x,t}{
r= 1, \quad 
(t z_1 t^{-1}) (t z_1^{-1} t^{-1}) = 1, \quad 
\ldots, \quad  
(t z_k t^{-1}) (t z_k^{-1} t^{-1}) = 1
}.
\]

Since $\Mpres{A}{q=1}$ is a group it follows that every letter from $a$ is invertible in this monoid. From this it follows that in the monoid $\Mpres{B,x,t}{r=1}$, where $r$ is the word defined above obtained from $q$ by replacing $a$ by $tx$ everywhere it appears, the element $tx$ is invertible. 
Indeed, the map $a \mapsto tx$ 
and identity on every other letter 
defines a homomorphism from $\Mpres{A}{q=1}$ to $\Mpres{B,x,t}{r=1}$ which must map units to units, and since $a$ is a unit in the former monoid, $tx$ is a unit in the latter monoid.

Since $tx$ is a unit in $\Mpres{B,x,t}{r=1}$ it follows that $tx$ is a unit in the inverse monoid $M$, since $M$ has $r=1$ as a defining relation.
The fact that $tx$ is invertible in $M$ implies $(tx)^{-1} = x^{-1} t^{-1}$ is also invertible in the inverse monoid $M$. 
The first letter of $s$ is $x$, so $t z_1 x$ is right invertible in $M$ since it is a prefix of the second defining relator, hence the product $t z_1 xx^{-1} t^{-1} $ of two right invertible elements is right invertible, hence by Lemma~\ref{lem_reduce} it is equal in $M$ to $t z_1 t^{-1} $. We conclude that $t z_1 t^{-1} $ is right invertible in $M$.

Now $w_1 \overline{w_1} = \overline{w_1} w_1 = 1$ in $\Mpres{A}{q=1}$ implies 
$z_1 \overline{z_1} = \overline{z_1} z_1 = 1$ in $\Mpres{B,x,t}{r=1}$ since the map between these monoids given by $a \mapsto xt$, and identity on all other generators, defines a homomorphism with $z_1$ the image of $w_1$ under this homomorphism, and       
$\overline{z_1}$ is the image of $\overline{w_1}$.   
Since $r=1$ in $M$ it then follows that $z_1 \overline{z_1} = \overline{z_1} z_1 = 1$ in $M$, hence $z_1^{-1} = \overline{z_1}$ in $M$.    
Since $t z_1 t^{-1} $ is right invertible in $M$ we have 
$
t z_1 t^{-1} t z_1^{-1} t^{-1} = 1  
$
in $M$, then since  $z_1^{-1} = \overline{z_1}$ in $M$, and $r=1$ in $M$, it follows that 
$
t z_1 t^{-1} r t \overline{z_1} t^{-1} r = 1  
$
in $M$. Since this word equals $1$ it is right invertible and hence by Lemma~\ref{lem_reduce} it is equal in $M$ to the word obtained by cancelling the $t^{-1}$ with the first letters of $r$. It follows that       
\[ 
t z_1 s t\overline{z_1} s = t z_1 t^{-1} r t \overline{z_1} t^{-1} r = 1  
\]
in $M$. Hence  
\[
t z_2 s t\overline{z_2} s  \ldots  s t z_k s t\overline{z_k} s =
t z_1 s t\overline{z_1} s  \ldots  s t z_k s t\overline{z_k} s = 1
\]
in $M$. It follows that $t z_2 x$ is right invertible since $s$ begins with $x$, and we can repeat the argument above to deduce that 
$t z_2 t^{-1} $ is right invertible in $M$ and  
\[
t z_2 s t\overline{z_2} s =
t z_2 t^{-1} r t \overline{z_2} t^{-1} r =
 1  
\]
in $M$. 
Repeating this for all $j$, we can prove that 
$t z_j t^{-1} $ is right invertible  in $M$, and we have  
\[
t z_j s t\overline{z_j} s = 
t z_j t^{-1} r t \overline{z_j} t^{-1} r =
1  
\]
in $M$. 
In particular since
$t z_j t^{-1} $ is right invertible  in $M$ 
this means that the defining relations   
$
(t z_j t^{-1}) (t z_j^{-1} t^{-1}) = 1  
$
from the presentation for $T$ all hold in $M$ for $1 \leq j \leq k$.     

Conversely, 
since $z_j \overline{z_j} = \overline{z_j} z_j = 1$ in $\Mpres{B,x,t}{r=1}$, 
and 
$r=1$ in $T$, it follows that $z_j^{-1} = \overline{z_j}$ in $T$ for all $1 \leq j \leq k$. It follows that 
\[
(t z_1 t^{-1})  (t\overline{z_1} t^{-1})  \ldots   (t z_k t^{-1})  (t\overline{z_k} t^{-1})  = 
(t z_1 t^{-1})  (tz_1^{-1} t^{-1})  \ldots   (t z_k t^{-1})  (tz_k^{-1} t^{-1}) =
1 
\] 
in $T$ which, since $r=1$, implies that   
\[
(t z_1 t^{-1}) r (t\overline{z_1} t^{-1}) r  \ldots  r (t z_k t^{-1}) r (t\overline{z_k} t^{-1}) r = 1 
\]  
holds in $T$.  
Since the word equals one and so right is invertible by Lemma~\ref{lem_reduce} we can reduce cancelling the $t^{-1} t$ in each subword $t^{-1}r$ and deduce that the following relation holds in $T$  
\[
t z_1 s t\overline{z_1} s  \ldots  s t z_k s t\overline{z_k} s = 1. 
\]
We have proved the second defining relator of $M$ holds in $T$, and the second defining relator of $T$ holds in $M$. Since the first defining relators are the same, and the generating sets are the same, we conclude that $M \cong T$. 
In fact we have proved that these are equivalent presentations in the sense that the identity map on $B \cup \{x,t\}$ induces an isomorphism between them.  

To complete the proof we now prove a second sequence of Tietze transformations to the presentation for $T$ to obtain a presentation to which the main construction from \cite{gray2020undecidability} can then be applied to finish the proof. 
To do this we first we add a redundant generator $a$ and relation $a=tx$ to obtain    
\[
\Ipres{B,a,x,t}{
r= 1, \quad 
(t z_1 t^{-1}) (t z_1^{-1} t^{-1}) = 1, \;
\ldots, \;  
(t z_k t^{-1}) (t z_k^{-1} t^{-1}) = 1, \;
a=tx
}. 
\]
Next in every $z_i$ and in $r$ we can replace every occurrence of $xt$ by the letter $a$ obtaining, since $A = B \cup \{a\}$, the presentation     
\[
\Ipres{A,x,t}{
q= 1, \; 
(t w_1 t^{-1}) (t w_1^{-1} t^{-1}) = 1, \; 
\ldots, \; 
(t w_k t^{-1}) (t w_k^{-1} t^{-1}) = 1, \;
a=tx
}. 
\]
By assumption the relations $aa^{-1}=1$ and $a^{-1}a=1$ are both consequences of $q=1$ hence from $a=tx$ we can deduce that $1 = a^{-1}a = a^{-1}tx$ in this inverse monoid. Hence we can deduce $x = t^{-1}a$ in this inverse monoid, giving the following presentation      
\[
\bigl\langle A,x,t 
\:|\: 
q= 1, \quad 
(t w_1 t^{-1}) (t w_1^{-1} t^{-1}) = 1, \quad 
\ldots, \quad  
(t w_k t^{-1}) (t w_k^{-1} t^{-1}) = 1, \quad
\]
\[
a=tx, \quad 
x = t^{-1}a
\bigr\rangle.
\]
Now the relation 
$(t w_1 t^{-1}) (t w_1^{-1} t^{-1}) = 1$ implies that $tt^{-1}=1$ in this inverse monoid, and so from $x = t^{-1}a$ we can deduce the relation $tx = tt^{-1}a = a$. Since this relation is a consequence of the others we can now remove it obtaining the presentation   
 \[
\Ipres{A,x,t}{
q= 1, \; 
(t w_1 t^{-1}) (t w_1^{-1} t^{-1}) = 1, \; 
\ldots, \quad  
(t w_k t^{-1}) (t w_k^{-1} t^{-1}) = 1, \;
x = t^{-1}a
}
\]
for the same monoid. Now $x$ is a redundant generator which can be removed. This has no impact on the other relations since neither $x$ nor $x^{-1}$ appears in any of those words. Hence we arrive at the following presentation  
\[
\Ipres{A,t}{
q= 1, \quad 
(t w_1 t^{-1}) (t w_1^{-1} t^{-1}) = 1, \quad 
\ldots, \quad  
(t w_k t^{-1}) (t w_k^{-1} t^{-1}) = 1 \quad
}
\]
for $T$. Finally, by assumption the relations $cc^{-1}=1$ and $c^{-1}c=1$ for $c \in A$ are all consequences of $q=1$ so we can add them to obtain      
\[
\bigl\langle  
A,t
\:|\: 
q= 1, \; 
cc^{-1}=1, \ c^{-1}c=1 \; (c \in A), \quad 
(t w_1 t^{-1}) (t w_1^{-1} t^{-1}) = 1, \quad 
\]
\[
\ldots, \quad  
(t w_k t^{-1}) (t w_k^{-1} t^{-1}) = 1 
\bigl\rangle. 
\]
Now it follows from \cite[Theorem~3.8]{gray2020undecidability} that $M \cong T$ is an E-unitary inverse monoid, and 
if $M \cong T$ has decidable word problem then the membership problem for $Q = \Mgen{w_1, \ldots, w_k} \leq G$ within $G$ is decidable.
The fact that \cite[Theorem~3.8]{gray2020undecidability} applies to the presentation above follows from \cite[Lemma~3.3]{gray2020undecidability} (see line three of the proof of Theorem~3.8 in \cite{gray2020undecidability}). 
The maximal group image of $T$ is isomorphic to   
\[
\bigl\langle  
A,t
\:|\: 
q= 1, \; 
cc^{-1}=1, \ c^{-1}c=1 \; (c \in A), \quad 
(t w_1 t^{-1}) (t w_1^{-1} t^{-1}) = 1, \quad 
\]
\[
\ldots, \quad  
(t w_k t^{-1}) (t w_k^{-1} t^{-1}) = 1 \quad
\bigl\rangle  
\]
which is isomorphic to $\Gpres{A}{q=1} \ast FG(t) \cong G \ast \mathbb{Z}$, completing the proof of the theorem. 
\end{proof}

By combining this general theorem with the earlier results form this paper we can now prove the main result of this section. 

\begin{proof}[Proof of Thoerem~\ref{thm_2RelatorInverseUndecWP}]
Let $G$ be a positive one-relator group and let $Q$ be a finitely generated submonoid in which the membership problem is undecidable; such examples exist by Theorem~\ref{thm_mainPositive}. Then by Theorem~\ref{thm_PosTwoRelatorGeneral} there is a $E$-unitary positive two-relator inverse monoid 
$M=\Ipres{A}{u=1, v=1}$, with $u, v \in A^+$, such that decidability of the word problem for $M$ would imply decidability of the membership problem for $Q$ in $G$. Hence $M$ is an $E$-unitary positive two-relator inverse monoid with 
undecidable word problem.  
  \end{proof}

As a second application we now show how Theorem~\ref{thm_PosTwoRelatorGeneral} can be used to relate the membership problem in positive one-relator groups to the prefix membership problem in positive two-relator groups. 
It remains an open question whether there is a one-relator group presentation with cyclically reduced relator and undecidable prefix membership problem \cite[Question 13.10]{Bestvina2004}. In particular, it is not known whether there are positive one-relator groups with undecidable prefix membership problem.
Using Theorem~\ref{thm_2RelatorInverseUndecWP} and its proof we will show that if one allows two relators then examples with positive relators do exist. The following result is also the key ingredient used in the proof of Theorem~\ref{Thm:exists-quasi-positive-undec-prefix} above that shows there are quasi-positive one-relator groups with undecidable prefix membership problem.
 
\begin{theorem}
\label{thm_TwoRelator} 
Let $G$ be a positive one-relator group, and let $Q$ be any finitely generated submonoid of $G$. 
Then there is a positive two-relator group $H=\Gpres{A}{u=1, v=1}$, where $u, v \in A^+$, 
such that the membership problem for $Q$ in $G$ reduces to the prefix membership problem for $H$. 
Furthermore the group $H$ may be chosen such that the identity map on $A$ induces an isomorphism 
$\Gpres{A}{u=1, v=1} \rightarrow \Gpres{A}{u=1}$, and such that $H \cong G \ast \mathbb{Z}$.   
\end{theorem}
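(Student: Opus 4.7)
The plan is to apply Construction~\ref{construction_encoding} and take
\[
H := H_{G,X} = \Gpres{B \cup \{x,t\}}{r = 1,\, v = 1}, \qquad v \equiv tz_1 s\, t\overline{z_1} s \cdots s\, tz_k s\, t\overline{z_k} s,
\]
which is a positive two-relator group playing the role of $\Gpres{A}{u=1, v=1}$ in the theorem statement (with $A = B \cup \{x,t\}$, $u = r$). The proof then splits into three tasks: (a) show that $v = 1$ is already a consequence of $r = 1$ at the group level, so the identity map on $A$ induces an isomorphism $\Gpres{A}{u=1, v=1} \to \Gpres{A}{u=1}$; (b) deduce $\Gpres{A}{u=1} \cong G \ast \mathbb{Z}$; and (c) reduce the membership problem for $Q$ in $G$ to the prefix membership problem in $H$.

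For (a) and (b) I would work inside $\Gpres{A}{r=1}$. The substitution $a \mapsto tx$ extends to a homomorphism $G = \Gpres{B \cup \{a\}}{q = 1} \to \Gpres{A}{r=1}$, so each identity $w_i \overline{w_i} = 1$ holding in $G$ yields $z_i \overline{z_i} = 1$ in $\Gpres{A}{r=1}$. Combined with $s = t^{-1}$ (from $r = ts = 1$), substitution into $v$ telescopes as
\[
v = (tz_1 t^{-1})(t\overline{z_1} t^{-1}) \cdots (tz_k t^{-1})(t\overline{z_k} t^{-1}) = t\, z_1 \overline{z_1} \cdots z_k \overline{z_k}\, t^{-1} = 1,
\]
which proves (a). Then introducing the redundant generator $a$ with relation $a = tx$ and eliminating $x$ via $x = t^{-1} a$ rewrites $r$ as the $t$-free word $q$, identifying $\Gpres{A}{u=1}$ with $\Gpres{B \cup \{a,t\}}{q = 1} \cong G \ast \langle t \rangle \cong G \ast \mathbb{Z}$, giving (b).

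Task (c) is the substantive one. By Theorem~\ref{thm_PosTwoRelatorGeneral}, the inverse monoid $M_{G,X} = \Ipres{A}{r=1, v=1}$ is E-unitary with maximal group image $H$, and membership for $Q$ in $G$ reduces to the word problem of $M_{G,X}$. Because $M_{G,X}$ is E-unitary, two words are equal in $M_{G,X}$ if and only if they are equal in the maximal group image $H$ \emph{and} they induce equal left and right idempotents; the first condition is decidable since $H \cong G \ast \mathbb{Z}$ has decidable word problem (by Magnus' theorem together with closure of decidable word problems under free products). The equality of idempotents then reduces, via the Ivanov--Margolis--Meakin framework used in the proof of Theorem~\ref{thm_PosTwoRelatorGeneral} through \cite[Theorem~3.8]{gray2020undecidability}, to prefix membership in $H$ with respect to the prefix monoid $\Mgen{\pref{r} \cup \pref{v}}$: for special inverse monoids, right-invertibility is controlled exactly by this prefix monoid. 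Chaining the three reductions produces the required reduction from membership for $Q$ in $G$ to prefix membership in $H$. The main obstacle is this last step — carefully tracing the idempotent-equality-to-prefix-membership reduction for our two-relator presentation, which is obtained by unpacking the Tietze-transformation chain of the proof of Theorem~\ref{thm_PosTwoRelatorGeneral} together with the standard characterization of right-invertibles in special inverse monoids as precisely the prefix monoid of the maximal group image.
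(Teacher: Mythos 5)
Your proposal is correct and follows essentially the same route as the paper: take $H = H_{G,X}$ from Construction~\ref{construction_encoding}, chain the reduction of Theorem~\ref{thm_PosTwoRelatorGeneral} (membership for $Q$ in $G$ reduces to the word problem of the E-unitary inverse monoid $M_{G,X}$) with the Ivanov--Margolis--Meakin reduction of that word problem to the prefix membership problem of the maximal group image $H \cong G \ast \mathbb{Z}$ --- the paper simply cites \cite[Theorem~3.3]{Ivanov2001} for the step you sketch via E-unitarity and idempotents. The only cosmetic difference is that you establish the isomorphism $\Gpres{A}{u=1,v=1} \to \Gpres{A}{u=1}$ by a direct telescoping computation using $s = t^{-1}$ and $z_i\overline{z_i}=1$ in $\Gpres{A}{r=1}$, whereas the paper inherits it from the inverse-monoid Tietze equivalence proved in Theorem~\ref{thm_PosTwoRelatorGeneral}; both are valid.
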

\begin{proof} 
Let $K$ be the group defined by the two-relator positive presentation  
\[
K = H_{G,X} =  
\Gpres{B,x,t}{
r = 1, \quad 
t z_1 s t\overline{z_1} s  \ldots  s t z_k s t\overline{z_k} s = 1
}
\]
given by Construction~\ref{construction_encoding}.
We claim if the prefix membership problem for the positive two-relator group $K$ is decidable then the membership problem for $Q$ in $G$ is decidable. 
To see this note that it is shown in the proof of  Theorem~\ref{thm_PosTwoRelatorGeneral} that 
\[
M = 
\Ipres{B,x,t}{
r = 1, \quad 
t z_1 s t\overline{z_1} s  \ldots  s t z_k s t\overline{z_k} s = 1
}
\]
is E-unitary and if $M$ has decidable word problem then   
the membership problem for $Q$ in $G$ is decidable.   
It is also shown there that the inverse monoid $M$ has maximal group image isomorphic to  
$G \ast \mathbb{Z}$ which is a one-relator group and thus    
has decidable word problem by Magnus' Theorem. 
It then follows from 
\cite[Theorem~3.3]{Ivanov2001} 
that $M$ has decidable word problem. 
But in Theorem~\ref{thm_PosTwoRelatorGeneral} it is proved that decidability of the word problem for $M$ implies decidability of the membership problem for $Q$ in $G$, hence the membership problem for $Q$ in $G$ is decidable, as claimed.  

In the proof of Theorem~\ref{thm_PosTwoRelatorGeneral} it is shown that the identity map on $B\cup\{x,t\}$ induces an isomorphism between the inverse monoids 
\[
M = \Ipres{B,x,t}{
r = 1, \quad 
t z_1 s t\overline{z_1} s  \ldots  s t z_k s t\overline{z_k} s = 1
}
\]
and 
\[
T = \Ipres{B,x,t}{
r= 1, \quad 
(t z_1 t^{-1}) (t z_1^{-1} t^{-1}) = 1, \quad 
\ldots, \quad  
(t z_k t^{-1}) (t z_k^{-1} t^{-1}) = 1
}.
\]
It follows that the identity map induces an isomorphism between the maximal group images of these inverse monoids 
\[
\Gpres{B,x,t}{
r = 1, \quad 
t z_1 s t\overline{z_1} s  \ldots  s t z_k s t\overline{z_k} s = 1
}
\rightarrow 
\Gpres{B,x,t}{r=1}. 
\]
(In general if the identity map induces an isomorphism $\Ipres{A}{R} \rightarrow \Ipres{A}{S}$ then for any additional set of relations $T$ the identity map will also induce an isomorphism between $\Ipres{A}{R \cup T} \rightarrow \Ipres{A}{S \cup T}$ since $S$ and $R$ are consequence of each other, and hence the same is true of $S \cup T$ and $R \cup T$. And so in particular it is true when $T$ is the set of relations $aa^{-1}= 1 = aa^{-1}$ added to get the maximal group image.)         
Now set $u \equiv r$ and $v \equiv t z_1 s t\overline{z_1} s  \ldots  s t z_k s t\overline{z_k} s $ and $Y = B \cup \{x,t\}$. Then the identity map on $Y$ induces isomorphisms 
\[
\Gpres{Y}{u=1,v=1} \rightarrow \Gpres{Y}{u=1}. 
\] 
It follows from Theorem~\ref{thm_PosTwoRelatorGeneral} that the maximal group image of $M$ is isomorphic to $G \ast \mathbb{Z}$. But the group  $\Gpres{Y}{vuv^{-1}=1}$ we have constructed is isomorphic to the maximal group image  $\Gpres{Y}{u=1,v=1}$ of $M$, so this completes the proof of the theorem.  
  \end{proof}

\begin{cor}\label{cor_TwoRelator} 
There is a positive two-relator group $\Gpres{A}{u=1, v=1}$, where $u, v \in A^+$, with an undecidable prefix membership problem. 
  \end{cor}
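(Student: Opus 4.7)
The plan is to derive this corollary as an immediate consequence of the two main theorems that precede it, namely Theorem~\ref{thm_mainPositive} and Theorem~\ref{thm_TwoRelator}. The whole point of the preceding machinery, in particular the encoding construction in Construction~\ref{construction_encoding} and its analysis in Theorem~\ref{thm_TwoRelator}, was to produce a positive two-relator group whose prefix membership problem is at least as hard as the submonoid membership problem in a suitable positive one-relator group. So the work to be done here is purely combinational.

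First I would invoke Theorem~\ref{thm_mainPositive}: it furnishes, for any $m,n \geq 2$, a positive one-relator group $G = G_{m,n} \cong \Gpres{a,b}{(ba^n)^m(a^n b)^m = 1}$ together with a fixed finitely generated submonoid $Q \leq G$ for which membership in $Q$ within $G$ is undecidable. Fix such a pair $(G,Q)$; any choice of $m,n \geq 2$ will do.

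Next I would apply Theorem~\ref{thm_TwoRelator} to this pair $(G,Q)$. That theorem hands back a positive two-relator group $H = \Gpres{A}{u=1,v=1}$, with $u,v \in A^+$, such that the membership problem for $Q$ in $G$ reduces to the prefix membership problem for $H$. Since the former problem is undecidable, the latter must be undecidable as well, and $H$ is the required example.

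There is effectively no obstacle here: both ingredients are already in place and all that remains is to chain the two reductions together. If one wants to be fully explicit, the generators and relators of $H$ can be written down by unpacking Construction~\ref{construction_encoding} applied to the positive one-relation monoid presentation $\Mpres{a,b}{(ba^n)^m(a^n b)^m = 1}$ of $G_{m,n}$ and to a fixed finite positive generating set $X \subseteq \{a,b\}^+$ of the undecidable submonoid $Q$ provided by Theorem~\ref{thm_mainPositive}; but for the purposes of the corollary, existence is all that is needed.
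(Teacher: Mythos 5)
Your proposal is correct and follows exactly the paper's own proof: the paper likewise obtains the corollary by combining Theorem~\ref{thm_mainPositive} (existence of a positive one-relator group with a fixed finitely generated submonoid having undecidable membership) with the reduction of Theorem~\ref{thm_TwoRelator}. No gaps; the additional remark about unpacking Construction~\ref{construction_encoding} to write the relators explicitly is optional and consistent with the paper.
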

\begin{proof} 
This follows from Theorem~\ref{thm_TwoRelator} together with Theorem~\ref{thm_mainPositive} that shows there are positive one-relator groups containing finitely generated submonoids in which membership is undecidable. 
\end{proof}


\begin{thebibliography}{10}

\bibitem{Adian1960}
S.~I. Adian.
\newblock On the embeddability of semigroups in groups \newblock  {\em Soviet Math. Dokl.}, 1:819--821, 1960.  [Translated from {\em Dokl. Akad. Nauk SSSR}, 133:255--257, 1960 (in Russian)]

\bibitem{Adian1966}
S.~I. Adian.
\newblock {\em Defining relations and algorithmic problems for groups and
  semigroups}.
\newblock Proceedings of the Steklov Institute of Mathematics, No. 85 (1966).
  Translated from the Russian by M. Greendlinger. American Mathematical
  Society, Providence, R.I., 1966.

\bibitem{Adian1976}
S.~I. Adian.
\newblock Word transformations in a semigroup that is given by a system of
  defining relations (in Russian).
\newblock {\em Algebra i Logika}, 15(6):611--621, 743, 1976.

\bibitem{Adian1978}
S.~I. Adian and G.~U. Oganesian.
\newblock On problems of equality and divisibility in semigroups with a single
  defining relation.
\newblock {\em Math. USSR, Izv.}, 12:207--212, 1978.
\newblock [Translated from: Izv. Akad. Nauk SSSR Ser. Mat. {\bf 42}, 2, 1978 (in Russian)].

\bibitem{Adian1987}
S.~I. Adian and G.~U. Oganesian.
\newblock On the word and divisibility problems for semigroups with one relation.
\newblock 
\newblock {\em Math. Notes}, 41, 235--240, 1987. 
\newblock [Translated from: {\em Mat. Zametki}, 41(3):412--421, 458, 1987 (in Russian)]. 



\bibitem{Baumslag1971}
G.~Baumslag.
\newblock Positive one-relator groups.
\newblock {\em Trans. Amer. Math. Soc.}, 156:165--183, 1971.

\bibitem{Bestvina2004}
M.~Bestvina.
\newblock Questions in geometric group theory.
\newblock 2004.
\newblock Retrieved from http://www.math.utah.edu/$\sim$bestvina.

\bibitem{BooneWP}
W.~W. Boone, F.~B. Cannonito, and R.~C. Lyndon, editors.
\newblock {\em Word problems}.
\newblock North-Holland Publishing Co. Amsterdam-London, 1973.
\newblock Decision problems and the Burnside problem in group theory, An
  outgrowth of the Conference on Decision Problems in Group Theory, University
  of California at Irvine, Calif., September 1965, Dedicated to the memory of
  Hanna Neumann (1914--1971), Studies in Logic and the Foundations of
  Mathematics, Vol. 71.

\bibitem{Brown1982}
K.~S. Brown.
\newblock {\em Cohomology of groups}, volume~87 of {\em Graduate Texts in
  Mathematics}.
\newblock Springer-Verlag, New York-Berlin, 1982.

\bibitem{Charney2007}
R.~Charney.
\newblock An introduction to right-angled {A}rtin groups.
\newblock {\em Geom. Dedicata}, 125:141--158, 2007.

\bibitem{Diekert1990}
V.~Diekert.
\newblock {\em Combinatorics on traces}, volume 454 of {\em Lecture Notes in
  Computer Science}.
\newblock Springer-Verlag, Berlin, 1990.
\newblock With a foreword by Wilfried Brauer.

\bibitem{dolinka2021new}
I.~Dolinka and R.~Gray.
\newblock New results on the prefix membership problem for one-relator groups.
\newblock {\em Transactions of the American Mathematical Society},
  374(6):4309--4358, 2021.

\bibitem{Gordon2010}
C.~Gordon and H.~Wilton.
\newblock On surface subgroups of doubles of free groups.
\newblock {\em J. Lond. Math. Soc. (2)}, 82(1):17--31, 2010.

\bibitem{gray2020undecidability}
R.~D. Gray.
\newblock Undecidability of the word problem for one-relator inverse monoids
  via right-angled artin subgroups of one-relator groups.
\newblock {\em Inventiones Mathematicae}, 219(3):987--1008, 2020.

\bibitem{Grunschlag1999}
Z.~Grunschlag.
\newblock {\em {Algorithms in Geometric Group Theory}}.
\newblock PhD thesis, University of California at Berkeley, 1999.

\bibitem{Guba1997}
V.~S. Guba.
\newblock On a relation between the word problem and the word divisibility
  problem for semigroups with one defining relation.
\newblock {\em Izv. Math.} 61(6), 1137--1169, 1997.
\newblock [Translated from: {\em Izv. Ross. Akad. Nauk Ser. Mat.}, 61(6):27--58, 1997 (in Russian)].

\bibitem{Hoare1971}
A.~H.~M. Hoare, A.~Karrass, and D.~Solitar.
\newblock Subgroups of finite index of {F}uchsian groups.
\newblock {\em Math. Z.}, 120:289--298, 1971.

\bibitem{holt2005handbook}
D.~F. Holt, B.~Eick, and E.~A. O'Brien.
\newblock {\em Handbook of computational group theory}.
\newblock Discrete Mathematics and its Applications (Boca Raton). Chapman \&
  Hall/CRC, Boca Raton, FL, 2005.

\bibitem{Howie1995}
J.~M. Howie.
\newblock {\em Fundamentals of semigroup theory}, volume~12 of {\em London
  Mathematical Society Monographs. New Series}.
\newblock The Clarendon Press, Oxford University Press, New York, 1995.
\newblock Oxford Science Publications.

\bibitem{Inam2017}
M.~Inam.
\newblock The word problem for some classes of {A}dian inverse semigroups.
\newblock {\em Semigroup Forum}, 95(1):192--221, 2017.

\bibitem{Inam2017b}
M.~Inam, J.~Meakin, and R.~Ruyle.
\newblock A structural property of {A}dian inverse semigroups.
\newblock {\em Semigroup Forum}, 94(1):93--103, 2017.

\bibitem{Ivanov2001}
S.~V. Ivanov, S.~W. Margolis, and J.~C. Meakin.
\newblock On one-relator inverse monoids and one-relator groups.
\newblock {\em J. Pure Appl. Algebra}, 159(1):83--111, 2001.

\bibitem{Jackson2001}
D.~A. Jackson.
\newblock The membership problem for {$\langle a,b\colon\ bab^2=ab\rangle$}.
\newblock {\em Semigroup Forum}, 63(1):63--70, 2001.

\bibitem{Jackson2002}
D.~A. Jackson.
\newblock Decision and separability problems for {B}aumslag-{S}olitar
  semigroups.
\newblock volume~12, pages 33--49. 2002.
\newblock International Conference on Geometric and Combinatorial Methods in
  Group Theory and Semigroup Theory (Lincoln, NE, 2000).

\bibitem{Arye2012}
A.~Juh\'{a}sz.
\newblock Solution of the membership problem for certain rational subsets of
  one-relator groups with a small cancellation condition.
\newblock {\em Internat. J. Algebra Comput.}, 22(4):1250027, 30, 2012.

\bibitem{Kambites2009}
M.~Kambites.
\newblock Small overlap monoids. {II}. {A}utomatic structures and normal forms.
\newblock {\em J. Algebra}, 321(8):2302--2316, 2009.

\bibitem{Kourovka2020}
E.~I. Khukhro and V.~D. Mazurov, editors.
\newblock {\em The {K}ourovka notebook}.
\newblock Sobolev Institute of Mathematics. Russian Academy of Sciences.
  Siberian Branch, Novosibirsk, 2020.
\newblock Unsolved problems in group theory, Twentieth Edition.

\bibitem{Kim2013}
S.-h. Kim and T.~Koberda.
\newblock Embedability between right-angled {A}rtin groups.
\newblock {\em Geom. Topol.}, 17(1):493--530, 2013.

\bibitem{Kobayashi2000}
Y.~Kobayashi.
\newblock Finite homotopy bases of one-relator monoids.
\newblock {\em J. Algebra}, 229(2):547--569, 2000.

\bibitem{Lallement1974}
G.~Lallement.
\newblock On monoids presented by a single relation.
\newblock {\em J. Algebra}, 32:370--388, 1974.

\bibitem{Lawson1998}
M.~V. Lawson.
\newblock {\em Inverse semigroups}.
\newblock World Scientific Publishing Co., Inc., River Edge, NJ, 1998.
\newblock The theory of partial symmetries.

\bibitem{Lohrey2015}
M.~Lohrey.
\newblock The rational subset membership problem for groups: a survey.
\newblock In {\em Groups {S}t {A}ndrews 2013}, volume 422 of {\em London Math.
  Soc. Lecture Note Ser.}, pages 368--389. Cambridge Univ. Press, Cambridge,
  2015.

\bibitem{Lohrey2008}
M.~Lohrey and B.~Steinberg.
\newblock The submonoid and rational subset membership problems for graph
  groups.
\newblock {\em J. Algebra}, 320(2):728--755, 2008.

\bibitem{lyndon1977combinatorial}
R.~C. Lyndon and P.~E. Schupp.
\newblock {\em Combinatorial group theory}, volume 188.
\newblock Springer, 1977.

\bibitem{Magnus1932}
W.~Magnus.
\newblock Das {I}dentit\"{a}tsproblem f\"{u}r {G}ruppen mit einer definierenden
  {R}elation.
\newblock {\em Math. Ann.}, 106(1):295--307, 1932.

\bibitem{Magnus1966}
W.~Magnus, A.~Karrass, and D.~Solitar.
\newblock {\em Combinatorial group theory: {P}resentations of groups in terms
  of generators and relations}.
\newblock Interscience Publishers [John Wiley \& Sons], New York-London-Sydney,
  1966.

\bibitem{Makanin1966}
G.~S. Makanin.
\newblock {\em On the identity problem for finitely presented Groups and
  semigroups}.
\newblock PhD thesis, Steklov Mathematical Institute, Moscow, 1966.

\bibitem{MMSunik2005}
S.~W. Margolis, J.~Meakin, and Z.~\v{S}uni\'{k}.
\newblock Distortion functions and the membership problem for submonoids of
  groups and monoids.
\newblock In {\em Geometric methods in group theory}, volume 372 of {\em
  Contemp. Math.}, pages 109--129. Amer. Math. Soc., Providence, RI, 2005.

\bibitem{Margolis1987}
S.~W. Margolis, J.~C. Meakin, and J.~B. Stephen.
\newblock Some decision problems for inverse monoid presentations.
\newblock In {\em Semigroups and their applications ({C}hico, {C}alif., 1986)},
  pages 99--110. Reidel, Dordrecht, 1987.

\bibitem{Meakin2007}
J.~Meakin.
\newblock Groups and semigroups: connections and contrasts.
\newblock In {\em Groups {S}t. {A}ndrews 2005. {V}ol. 2}, volume 340 of {\em
  London Math. Soc. Lecture Note Ser.}, pages 357--400. Cambridge Univ. Press,
  Cambridge, 2007.

\bibitem{Meakin2020}
J.~Meakin.
\newblock Presentations of inverse semigroups: progress, problems and
  applications.
\newblock In {\em New trends in algebras and combinatorics}, pages 281--316.
  World Sci. Publ., Hackensack, NJ, [2020] \copyright 2020.

\bibitem{Mikhailova1958}
K.~A. Mikhailova.
\newblock The occurrence problem for direct products of groups (in Russian).
\newblock {\em Dokl. Akad. Nauk SSSR}, 119:1103--1105, 1958.

\bibitem{nybergbroddacompression}
C.-F. Nyberg-Brodda.
\newblock On the word problem for weakly compressible monoids.
\newblock {\em Comm.\ Algebra}, 51(11):4731--4745, 2023

\bibitem{Makanin1966b}
C.-F. Nyberg-Brodda.
\newblock A translation of {G. S. Makanin's 1966 Ph.D.} thesis "{On} the
  identity problem for finitely presented groups and semigroups", 2021.
\newblock Available online at arXiv:2102.00745.

\bibitem{NybergBrodda2021a}
C.-F. Nyberg-Brodda.
\newblock The word problem for one-relation monoids: a survey.
\newblock {\em Semigroup Forum}, 103(2):297--355, 2021.

\bibitem{NybergBrodda2022b}
C.-F. Nyberg-Brodda.
\newblock On one-relator groups and units of special one-relation inverse
  monoids.
\newblock {\em Internat. J. Algebra Comput.}, 32(7):1379--1401, 2022.

\bibitem{nyberg2022diophantine}
C.-F. Nyberg-Brodda.
\newblock On the {D}iophantine problem in some one-relator groups.
\newblock 2022.
\newblock arXiv preprint arXiv:2208.07145.

\bibitem{NybergBrodda2022a}
C.-F. Nyberg-Brodda.
\newblock On the word problem for special monoids.
\newblock {\em Semigroup Forum}, 105(1):295--327, 2022.

\bibitem{Paris2002}
L.~Paris.
\newblock Artin monoids inject in their groups.
\newblock {\em Comment. Math. Helv.}, 77(3):609--637, 2002.

\bibitem{Perrin1984}
D.~Perrin and P.~Schupp.
\newblock Sur les mono\"{\i}des \`a un relateur qui sont des groupes.
\newblock {\em Theoret. Comput. Sci.}, 33(2-3):331--334, 1984.

\bibitem{Render2009}
E.~Render and M.~Kambites.
\newblock Rational subsets of polycyclic monoids and valence automata.
\newblock {\em Inform. and Comput.}, 207(11):1329--1339, 2009.

\bibitem{Rips1982}
E.~Rips.
\newblock Subgroups of small cancellation groups.
\newblock {\em Bull. London Math. Soc.}, 14(1):45--47, 1982.

\bibitem{Ruskuc1995}
N.~Ru{\v{s}}kuc.
\newblock {\em Semigroup presentations}.
\newblock PhD thesis, University of St. Andrews, 1995.

\bibitem{Scott1978}
P.~Scott.
\newblock Subgroups of surface groups are almost geometric.
\newblock {\em J. London Math. Soc. (2)}, 17(3):555--565, 1978.

\bibitem{Stallings1987}
J.~R. Stallings.
\newblock Adian groups and pregroups.
\newblock In {\em Essays in group theory}, volume~8 of {\em Math. Sci. Res.
  Inst. Publ.}, pages 321--342. Springer, New York, 1987.

\bibitem{Stillwell1993}
J.~Stillwell.
\newblock {\em Classical topology and combinatorial group theory}, volume~72 of
  {\em Graduate Texts in Mathematics}.
\newblock Springer-Verlag, New York, second edition, 1993.

\bibitem{Turing1950}
A.~M. Turing.
\newblock The word problem in semi-groups with cancellation.
\newblock {\em Ann. of Math. (2)}, 52:491--505, 1950.

\bibitem{Wise2001}
D.~T. Wise.
\newblock The residual finiteness of positive one-relator groups.
\newblock {\em Comment. Math. Helv.}, 76(2):314--338, 2001.

\bibitem{Wise2021}
D.~T. Wise.
\newblock {\em The structure of groups with a quasiconvex hierarchy}, volume
  209 of {\em Annals of Mathematics Studies}.
\newblock Princeton University Press, Princeton, NJ, [2021] \copyright 2021.

\end{thebibliography}
\end{document}